\documentclass[english]{article}
\usepackage{ae,aecompl}
\usepackage[T1]{fontenc}
\usepackage[latin9]{inputenc}
\usepackage[letterpaper]{geometry}
\geometry{verbose,tmargin=1in,bmargin=1in,lmargin=1in,rmargin=1in}
\usepackage{fancyhdr}
\pagestyle{fancy}
\setlength{\parskip}{\medskipamount}
\setlength{\parindent}{0pt}
\usepackage{babel}
\usepackage{array}
\usepackage{longtable}
\usepackage{amsmath}
\usepackage{amsthm}
\usepackage{amssymb}
\usepackage[numbers]{natbib}
\usepackage[unicode=true,
 bookmarks=true,bookmarksnumbered=false,bookmarksopen=true,bookmarksopenlevel=2,
 breaklinks=false,pdfborder={0 0 1},backref=false,colorlinks=false]
 {hyperref}

\makeatletter

\newcommand{\noun}[1]{\textsc{#1}}
\providecommand{\tabularnewline}{\\}

\theoremstyle{plain}
\newtheorem{thm}{\protect\theoremname}
  \theoremstyle{definition}
  \newtheorem{defn}[thm]{\protect\definitionname}
  \theoremstyle{definition}
  \newtheorem{example}[thm]{\protect\examplename}
  \theoremstyle{plain}
  \newtheorem{prop}[thm]{\protect\propositionname}
  \theoremstyle{plain}
  \newtheorem{lem}[thm]{\protect\lemmaname}
  \theoremstyle{definition}
  \newtheorem{condition}[thm]{\protect\conditionname}
  \theoremstyle{plain}
  \newtheorem{lyxalgorithm}[thm]{\protect\algorithmname}

\@ifundefined{date}{}{\date{}}

\usepackage{tabstackengine}

\makeatother

  \providecommand{\algorithmname}{Algorithm}
  \providecommand{\conditionname}{Condition}
  \providecommand{\definitionname}{Definition}
  \providecommand{\examplename}{Example}
  \providecommand{\lemmaname}{Lemma}
  \providecommand{\propositionname}{Proposition}
\providecommand{\theoremname}{Theorem}

\begin{document}

\lhead{Projective Equivalence for the Roots of Unity}

\rhead{Hang Fu}

\title{\noun{Projective Equivalence for the Roots of Unity}}

\author{\noun{Hang Fu}}
\maketitle
\begin{quote}
\textbf{\small{}Abstract.}{\small{} Let $\mu_{\infty}\subseteq\mathbb{C}$
be the collection of roots of unity and $\mathcal{C}_{n}:=\{(s_{1},\cdots,s_{n})\in\mu_{\infty}^{n}:s_{i}\neq s_{j}\text{ for any }1\leq i<j\leq n\}$.
Two elements $(s_{1},\cdots,s_{n})$ and $(t_{1},\cdots,t_{n})$ of
$\mathcal{C}_{n}$ are said to be projectively equivalent if there
exists $\gamma\in\textup{PGL}(2,\mathbb{C})$ such that $\gamma(s_{i})=t_{i}$
for any $1\leq i\leq n$. In this article, we will give a complete
classification for the projectively equivalent pairs. As a consequence,
we will show that the maximal length for the nontrivial projectively
equivalent pairs is $14$.}{\small \par}

\textbf{\small{}Keywords.}{\small{} Roots of unity $\cdot$ Diophantine
equations $\cdot$ Unlikely intersections}{\small \par}

\textbf{\small{}Mathematics Subject Classification.}{\small{} 11Y50
$\cdot$ 11L03 $\cdot$ 11D72}{\small \par}
\end{quote}

\section{\label{section1}Introduction and the statements of main results}

Let $\mu_{\infty}\subseteq\mathbb{C}$ be the collection of roots
of unity and $\textup{Arg}/(2\pi):\mu_{\infty}\to\mathbb{Q}/\mathbb{Z}$
the canonical isomorphism.
\begin{defn}
\label{definition1}Let
\[
\mathcal{C}_{n}:=\{(s_{1},\cdots,s_{n})\in\mu_{\infty}^{n}:s_{i}\neq s_{j}\text{ for any }1\leq i<j\leq n\}.
\]
Two elements $(s_{1},\cdots,s_{n})$ and $(t_{1},\cdots,t_{n})$ of
$\mathcal{C}_{n}$ are said to be projectively equivalent if there
exists $\gamma\in\textup{PGL}(2,\mathbb{C})$ such that $\gamma(s_{i})=t_{i}$
for any $1\leq i\leq n$. We write $(s_{1},\cdots,s_{n})\sim(t_{1},\cdots,t_{n})$
to denote that they are projectively equivalent. Let $\mathcal{C}_{n}^{+}$
be the isomorphic image of $\mathcal{C}_{n}$ under $(\textup{Arg}/(2\pi))^{n}$.
The corresponding equivalence relation in $\mathcal{C}_{n}^{+}$ is
also denoted by $\sim$.
\end{defn}

The following examples are immediate.
\begin{example}
\label{example2}$ $
\begin{itemize}
\item Rotation: $(s_{1},\cdots,s_{n})\sim(ss_{1},\cdots,ss_{n})$ for any
$(s_{1},\cdots,s_{n})\in\mathcal{C}_{n}$ and $s\in\mu_{\infty}$.
\item Inversion: $(s_{1},\cdots,s_{n})\sim(1/s_{1},\cdots,1/s_{n})$ for
any $(s_{1},\cdots,s_{n})\in\mathcal{C}_{n}$.
\item If $n\leq3$, then $(s_{1},\cdots,s_{n})\sim(t_{1},\cdots,t_{n})$
for any $(s_{1},\cdots,s_{n}),(t_{1},\cdots,t_{n})\in\mathcal{C}_{n}$.
\end{itemize}
\end{example}

Our objective in this article is to describe $\mathcal{C}_{n}/\sim$
for general $n$. In particular, it is reasonable to expect that the
nontrivial projectively equivalent pairs cannot be arbitrarily long.
While the existence of such an upper bound can be implied by a result
of Schlickewei \citep{MR1393508}, our next result shows that the
maximal length is $14$.
\begin{thm}
\label{theorem3}Up to rotations, inversions, permutations, and Galois
conjugations, there are two nontrivial projectively equivalent pairs
in $\mathcal{C}_{14}^{+}$:
\begin{eqnarray*}
 &  & (0,1/30,1/15,1/10,2/15,1/6,1/5,3/10,11/30,7/15,3/5,11/15,5/6,9/10)\\
 & \sim & (0,1/15,1/6,3/10,13/30,8/15,3/5,7/10,11/15,23/30,4/5,5/6,13/15,9/10)
\end{eqnarray*}
and
\begin{eqnarray*}
 &  & (0,1/60,1/30,1/15,1/10,2/15,1/5,11/30,31/60,2/3,5/6,9/10,14/15,29/30)\\
 & \sim & (0,3/20,3/10,7/15,8/15,17/30,3/5,19/30,13/20,2/3,7/10,11/15,23/30,5/6).
\end{eqnarray*}
There are no nontrivial projectively equivalent pairs in $\mathcal{C}_{n}^{+}$
for $n\geq15$.
\end{thm}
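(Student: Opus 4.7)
A projective equivalence $(s_1,\ldots,s_n)\sim(t_1,\ldots,t_n)$ is determined once the images of any three coordinates are fixed, so the first step is to encode the whole equivalence in the $n-3$ cross-ratio identities
\[
\frac{(s_1-s_3)(s_2-s_i)}{(s_1-s_i)(s_2-s_3)} \;=\; \frac{(t_1-t_3)(t_2-t_i)}{(t_1-t_i)(t_2-t_3)}, \qquad 4\le i\le n.
\]
Clearing denominators, each identity becomes a vanishing sum of at most $32$ roots of unity with $\pm1$ coefficients, and the rotation, inversion, permutation, and Galois symmetries collected in Example \ref{example2} may be used to fix the triples $(s_1,s_2,s_3)$ and $(t_1,t_2,t_3)$ in a normal form.

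The second step is to control the orders of the roots of unity that can appear. Writing $s-t=(\textup{unimodular})\cdot\sin(\arg(s/t)/2)$, each cross-ratio is a ratio of sines of rational multiples of $\pi$ and hence a real element of some cyclotomic field $\mathbb{Q}(\zeta_N)$. Acting by $\textup{Gal}(\mathbb{Q}(\zeta_N)/\mathbb{Q})$ produces new equivalent pairs, and the compatibility of the corresponding sine-ratio identities severely restricts $N$. At this point I would invoke the structural results on vanishing sums of roots of unity --- Conway--Jones for short sums and the Evertse--Schlickewei theory in general --- to deduce both an a priori bound on $N$ and a bound on the number of indices $i\ge 4$ for which the identities can simultaneously be satisfied, yielding the effective inequality $n\le 14$.

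Once $N$ is bounded --- the $30$'s and $60$'s appearing in the two exceptional pairs suggest that $N\mid 60$ in the surviving cases --- the classification becomes a finite problem: enumerate, modulo the stated symmetry group, all $n$-tuples in $\bigl((1/N)\mathbb{Z}/\mathbb{Z}\bigr)^n\cap\mathcal{C}_n^+$, and for each candidate pair check the cross-ratio identities. A direct computer search at $n=14$ should isolate exactly the two listed pairs, and the same exhaustion (combined with the fact that no pair can arise for $n>14$ without already giving one for $n=14$ by deletion) confirms non-existence for $n\ge15$.

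The hard part of the argument, in my view, is neither the reformulation nor the eventual enumeration but the passage from the general Schlickewei-type bound (which is far too weak numerically) to the effective conductor bound that makes the enumeration tractable. This is where the Galois-theoretic sine-ratio analysis above has to be pushed sharply; handling the short vanishing sums that survive (those with four, five or six terms), together with the interaction between the $n-3$ simultaneous relations, is where most of the real work should lie.
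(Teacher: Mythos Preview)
Your plan has a genuine structural gap: you propose to first bound the conductor $N$ of all the roots of unity involved and then enumerate. This cannot work, because the basic cross-ratio equation $(1,s_1,s_2,s_3)\sim(1,t_1,t_2,t_3)$ in roots of unity has positive-dimensional families of solutions with \emph{unbounded} conductor. Beyond the trivial three-parameter family there are three two-parameter and forty-one one-parameter families (this is the content of Theorem~\ref{theorem7} and Table~\ref{table1}), and in each of them the free variables range over all of $\mathbb{Q}/\mathbb{Z}$. So for a single index $i$ the identity you write down can hold with $s_j,t_j$ of arbitrarily large order, and no amount of Conway--Jones or Evertse--Schlickewei analysis of that one vanishing sum will produce a conductor bound. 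The guess $N\mid 60$ is also off: the isolated solutions in Table~\ref{table2} have orders up to $420$, and the final enumeration in the paper is carried out over $(1/84)\mathbb{Z}/\mathbb{Z}$ and $(1/120)\mathbb{Z}/\mathbb{Z}$, not $(1/60)\mathbb{Z}/\mathbb{Z}$.

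The paper's route is quite different in architecture. It first gives a complete classification of the set $\mathcal{S}^+$ of solutions to the single $4$-point equation, stratified by the number of free parameters (Theorem~\ref{theorem7}); this is where the vanishing-sum analysis lives, applied to a carefully symmetrised $24$-term relation rather than a raw $32$-term one, and supplemented by the multiplicative independence of cyclotomic units (Theorem~\ref{theorem15}) to pin down the $730$ isolated solutions. Only after this classification does one attack long chains: writing $Z_i=[s_1,s_2,s_i,t_1,t_2,t_i]\in\mathcal{S}^+$, the proof of Theorem~\ref{theorem3} proceeds by an inductive compatibility argument (Lemma~\ref{lemma38}, Algorithms~\ref{algorithm39} and~\ref{algorithm40}) which shows that as one stacks more and more $Z_i$ sharing the same $(s_1,s_2,t_1,t_2)$, either the free parameters in the families get forced to specific rational values of bounded denominator, or the chain stops. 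It is this interaction between the $n-3$ relations, mediated by the explicit parametric families, that yields both $n\le 14$ and the eventual conductor bound --- not an a priori bound on any one of them. Your last paragraph correctly identifies this interaction as the hard part, but your plan as written never supplies the mechanism (the full family classification) that makes it tractable.
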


Since for $n\geq4$,
\begin{eqnarray*}
 &  & (s_{1},\cdots,s_{n})\sim(t_{1},\cdots,t_{n})\\
 & \Leftrightarrow & (1,s_{2}/s_{1},\cdots,s_{n}/s_{1})\sim(1,t_{2}/t_{1},\cdots,t_{n}/t_{1})\\
 & \Leftrightarrow & (1,s_{2}/s_{1},s_{3}/s_{1},s_{i}/s_{1})\sim(1,t_{2}/t_{1},t_{3}/t_{1},t_{i}/t_{1})\text{ for any }4\leq i\leq n,
\end{eqnarray*}
$\mathcal{C}_{n}/\sim$ is determined by $\mathcal{C}_{4}/\sim$.
Now let us collect the solutions to the Diophantine equation
\[
(1,s_{1},s_{2},s_{3})\sim(1,t_{1},t_{2},t_{3})\text{ in }\mathcal{C}_{4}.
\]
\begin{defn}
\label{definition4}Let
\[
\mathcal{S}:=\{[s_{1},s_{2},s_{3},t_{1},t_{2},t_{3}]:(1,s_{1},s_{2},s_{3})\sim(1,t_{1},t_{2},t_{3})\text{ in }\mathcal{C}_{4}\}
\]
and $\mathcal{S}^{+}$ the isomorphic image of $\mathcal{S}$ under
$(\textup{Arg}/(2\pi))^{6}$.
\end{defn}

We notice that one solution in $\mathcal{S}$ can actually produce
many more solutions.
\begin{prop}
\label{proposition5}The groups $\textup{Gal}(\overline{\mathbb{Q}}/\mathbb{Q})$,
$\mathfrak{G}_{0}:=\left\langle (123)(456),(12)(45)\right\rangle \times\left\langle (14)(25)(36)\right\rangle \simeq\mathfrak{S}_{3}\times C_{2}$,
\begin{align*}
 & \mathfrak{G}_{s}:=\{g_{s}\text{ fixes }(s_{1},s_{2},s_{3})\text{ and maps }(t_{1},t_{2},t_{3})\text{ to some element of}\\
 & \{(t_{1},t_{2},t_{3})^{\pm1},(t_{1},t_{1}/t_{3},t_{1}/t_{2})^{\pm1},(t_{2}/t_{3},t_{2},t_{2}/t_{1})^{\pm1},(t_{3}/t_{2},t_{3}/t_{1},t_{3})^{\pm1}\}\}\simeq C_{2}^{3},
\end{align*}
and
\begin{align*}
 & \mathfrak{G}_{t}:=\{g_{t}\text{ fixes }(t_{1},t_{2},t_{3})\text{ and maps }(s_{1},s_{2},s_{3})\text{ to some element of}\\
 & \{(s_{1},s_{2},s_{3})^{\pm1},(s_{1},s_{1}/s_{3},s_{1}/s_{2})^{\pm1},(s_{2}/s_{3},s_{2},s_{2}/s_{1})^{\pm1},(s_{3}/s_{2},s_{3}/s_{1},s_{3})^{\pm1}\}\}\simeq C_{2}^{3}
\end{align*}
act on $\mathcal{S}$ naturally.
\end{prop}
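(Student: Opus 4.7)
The plan is to construct each of the four group actions on $\mathcal{S}$ explicitly and verify that each preserves the defining condition $(1,s_{1},s_{2},s_{3})\sim(1,t_{1},t_{2},t_{3})$, which is equivalent to the polynomial identity of cross-ratios $[1,s_{1};s_{2},s_{3}]=[1,t_{1};t_{2},t_{3}]$ with coefficients in $\mathbb{Q}$.

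For $\textup{Gal}(\overline{\mathbb{Q}}/\mathbb{Q})$, the entrywise action stabilizes $\mathcal{S}$: Galois automorphisms permute $\mu_{\infty}$ among itself and fix rational polynomial identities, so the cross-ratio equation is preserved. For $\mathfrak{G}_{0}\simeq\mathfrak{S}_{3}\times C_{2}$, the generators $(123)(456)$ and $(12)(45)$ of the $\mathfrak{S}_{3}$-factor act by simultaneous permutation of positions $\{1,2,3\}$ in $(s_{1},s_{2},s_{3})$ and in $(t_{1},t_{2},t_{3})$: if $\gamma\in\textup{PGL}(2,\mathbb{C})$ witnesses the equivalence $(1,s_{i})_{i}\sim(1,t_{i})_{i}$, the very same $\gamma$ witnesses the simultaneously permuted equivalence. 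The central generator $(14)(25)(36)$ swaps the $s$- and $t$-sides, witnessed by $\gamma^{-1}$, and commutes with the $\mathfrak{S}_{3}$-action because the former acts on positions $\{1,2,3\}$ coherently on both copies while the latter exchanges the two copies wholesale.

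For $\mathfrak{G}_{s}$ (and symmetrically $\mathfrak{G}_{t}$), the construction rests on two facts: the Klein four-group $V_{4}\le\mathfrak{S}_{4}$ of double transpositions is precisely the subgroup of permutations of an ordered 4-tuple that preserve its cross-ratio; and entrywise inversion $\iota\colon z\mapsto1/z$, being a Möbius transformation applied coordinatewise, also preserves cross-ratios. For each $\sigma\in V_{4}$ one applies $\sigma$ to $(1,t_{1},t_{2},t_{3})$ and then renormalizes by the unique Möbius map sending the new first coordinate back to $1$; the resulting tuple $(1,t'_{1},t'_{2},t'_{3})$ has the same cross-ratio as $(1,s_{1},s_{2},s_{3})$, so $[s_{1},s_{2},s_{3},t'_{1},t'_{2},t'_{3}]\in\mathcal{S}$, and this accounts for four of the eight listed tuples. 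Composing with $\iota$ doubles the count to eight, and the resulting group of transformations of $(t_{1},t_{2},t_{3})$ is $V_{4}\times\langle\iota\rangle\simeq C_{2}^{3}$.

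The main routine task is verifying that the eight explicit formulas for $\mathfrak{G}_{s}$ close under composition and realize the $C_{2}^{3}$ multiplication table; this reduces to a finite check matching the abstract structure of $V_{4}\times\langle\iota\rangle$, where, for example, $(12)(34)$-normalization composed with $(13)(24)$-normalization reproduces $(14)(23)$-normalization, and $\iota$ manifestly commutes with each of them. One also checks that each new 4-tuple remains in $\mathcal{C}_{4}$, which is immediate from the pairwise distinctness of $1,t_{1},t_{2},t_{3}$ as elements of $\mu_{\infty}$.
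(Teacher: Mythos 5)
Your proof is correct and takes essentially the same approach as the paper, whose own proof just demonstrates one representative case $(1,t_1,t_2,t_3)\sim(1/t_1,1,t_2/t_1,t_3/t_1)\sim(1,1/t_1,t_3/t_1,t_2/t_1)$ and declares the rest straightforward; your identification of $\mathfrak{G}_s$ with the cross-ratio-stabilizing $V_4\le\mathfrak{S}_4$ together with coordinatewise inversion makes the implicit structure explicit. One small imprecision: there is of course no \emph{unique} M\"obius map sending the new first coordinate to $1$ — what you mean and use is the specific scaling $z\mapsto z/c$, which is the one that keeps all entries in $\mu_\infty$.
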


\begin{proof}
If $[s_{1},s_{2},s_{3},t_{1},t_{2},t_{3}]\in\mathcal{S}$, then
\[
(1,s_{1},s_{2},s_{3})\sim(1,t_{1},t_{2},t_{3})\sim(1/t_{1},1,t_{2}/t_{1},t_{3}/t_{1})\sim(1,1/t_{1},t_{3}/t_{1},t_{2}/t_{1}),
\]
so $[s_{1},s_{2},s_{3},1/t_{1},t_{3}/t_{1},t_{2}/t_{1}]\in\mathcal{S}$.
The rest are straightforward.
\end{proof}
\begin{defn}
\label{definition6}Let $\mathfrak{G}:=\left\langle \mathfrak{G}_{0},\mathfrak{G}_{s},\mathfrak{G}_{t}\right\rangle $,
a group of order $768$. In $\mathcal{S}$, we write
\[
[s_{1},s_{2},s_{3},t_{1},t_{2},t_{3}]\approx[s_{1}',s_{2}',s_{3}',t_{1}',t_{2}',t_{3}']
\]
to denote that 
\[
[s_{1},s_{2},s_{3},t_{1},t_{2},t_{3}]=\sigma\cdot g\cdot[s_{1}',s_{2}',s_{3}',t_{1}',t_{2}',t_{3}']
\]
for some $\sigma\in\textup{Gal}(\overline{\mathbb{Q}}/\mathbb{Q})$
and $g\in\mathfrak{G}$. The corresponding equivalence relation in
$\mathcal{S}^{+}$ is also denoted by $\approx$.
\end{defn}

As we will see in a minute, the size of $\mathcal{S}$ is quite large,
so this equivalence relation is really helpful. With this simplification,
we are able to describe the structure of $\mathcal{S}^{+}/\approx$
now. We use the additive $\mathcal{S}^{+}$ rather than the multiplicative
$\mathcal{S}$ because $\theta$ is typographically simpler than $\exp(2\pi i\theta)$.
\begin{thm}
\label{theorem7}$\mathcal{S}^{+}/\approx$ can be classified as follows:
\begin{itemize}
\item $1$ three-parameter solution,
\item $3$ two-parameter solutions,
\item $41$ one-parameter solutions,
\item $730$ zero-parameter solutions.
\end{itemize}
The positive-parameter solutions are listed in Table \ref{table1},
where $x,y,z\in\mathbb{Q}/\mathbb{Z}$ can be taken arbitrarily as
long as $[s_{1},s_{2},s_{3},t_{1},t_{2},t_{3}]\in\mathcal{S}^{+}$.
The zero-parameter solutions are listed in Table \ref{table2}. (Since
it is too long, we put it at the end of this article to keep the text
uninterrupted.)
\end{thm}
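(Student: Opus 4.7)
The plan is to convert $(1,s_1,s_2,s_3)\sim(1,t_1,t_2,t_3)$ into a single Diophantine equation among roots of unity and then enumerate its solutions via the classification of vanishing sums of roots of unity, modding out by $\mathfrak{G}$ and Galois at the end. Projective equivalence of four-tuples is detected by the cross-ratio, so $[s_1,s_2,s_3,t_1,t_2,t_3]\in\mathcal{S}$ iff
\[
(1-s_2)(s_1-s_3)(1-t_3)(t_1-t_2)=(1-s_3)(s_1-s_2)(1-t_2)(t_1-t_3).
\]
Substituting $s_i=\exp(2\pi ix_i)$, $t_j=\exp(2\pi iy_j)$ and using the identity $\exp(2\pi ia)-\exp(2\pi ib)=2i\sin(\pi(a-b))\exp(\pi i(a+b))$ produces a common phase factor on both sides that cancels, leaving the real trigonometric identity
\[
\sin(\pi x_2)\sin(\pi(x_1-x_3))\sin(\pi y_3)\sin(\pi(y_1-y_2))=\sin(\pi x_3)\sin(\pi(x_1-x_2))\sin(\pi y_2)\sin(\pi(y_1-y_3)).
\]

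Expanding each $\sin(\pi\theta)=(\exp(i\pi\theta)-\exp(-i\pi\theta))/(2i)$ turns this identity into a vanishing sum of at most $32$ roots of unity with coefficients in $\{\pm 1\}$. The classical classification of vanishing sums of roots of unity (Mann; Conway--Jones; Lam--Leung) then forces this sum to decompose as a disjoint union of minimal vanishing subsums. Because a minimal vanishing sum of length $k$ lies in a cyclotomic field whose conductor involves only primes $\leq k$, only finitely many combinatorial configurations can occur, and the denominators of the $x_i,y_j$ are sharply restricted.

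For each such configuration I would read off the associated $\mathbb{Z}$-linear relations modulo $1$ among the six exponents $x_1,x_2,x_3,y_1,y_2,y_3$ and solve in closed form. Positive-dimensional solution families assemble into the three-parameter, two-parameter, and one-parameter entries of Table~\ref{table1}, while zero-dimensional solutions are enumerated exhaustively and reduced under $\mathfrak{G}\times\textup{Gal}(\overline{\mathbb{Q}}/\mathbb{Q})$ to the $730$ sporadic orbits of Table~\ref{table2}. A final overlap check must guarantee that no orbit appears in two different parametric strata nor simultaneously in a parametric family and in the sporadic list.

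The main obstacle is the combinatorial explosion in the second step: even after the conductor bound and an early $\mathfrak{G}$-symmetrization, a great many partitions of up to $32$ monomials into minimal vanishing subsums remain to be analysed, and each partition yields its own small linear system. The bulk of the work is therefore a careful computer-assisted case analysis whose purpose is to guarantee completeness of the enumeration; verifying that the entries of Tables~\ref{table1} and~\ref{table2} do lie in $\mathcal{S}^+$ is, by contrast, a mechanical substitution check.
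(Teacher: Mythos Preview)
Your high-level strategy---reduce the cross-ratio equality to a vanishing sum of roots of unity, decompose into minimal subsums, and solve the resulting linear systems in the exponents---is exactly the paper's approach. Two technical ingredients that your outline omits are what make the plan executable rather than merely plausible.

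First, your $32$ terms collapse to $24$ after the obvious cancellations, and the paper rewrites them in auxiliary variables $A_i,B_i,A_i',B_i'$ that satisfy six independent \emph{multiplicative} constraints (the paper's equation~(2)) beyond the additive vanishing. These constraints, together with the $\mathfrak{G}$-action on the set $U_{12}$ of variables and a Smith-normal-form analysis, are what cut the assignment problem down to a manageable number of cases. Treating the relation as an unstructured weight-$32$ sum would leave each configuration with too many undetermined linear combinations.

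Second, and this is the genuine gap: the paper is explicit that the Mann/Conway--Jones/Poonen--Rubinstein classification of minimal vanishing sums is \emph{not} sufficient here. Their tables stop at weight~$12$, while the relation has weight~$24$. For the subcase where the largest prime involved is $5$ or $7$, the minimal relations are not classified and the partition-into-subsums strategy cannot be pushed through directly. The paper instead proves a conductor bound (Lemma~29: the $s_i,t_i$ lie in $\mu_n$ for some $n\in\{240,252,280,300,336,360,420\}$) and then switches tools entirely, recasting the cross-ratio as a multiplicative relation among the numbers $|\zeta_n^k-1|$ and invoking the Bass--Ennola--Washington theorem on relations among cyclotomic units (paper's Theorem~15). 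This second tool is what actually produces the majority of the $730$ sporadic solutions; your plan, relying only on the vanishing-sum classification, would stall at precisely this case.
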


\begin{longtable}{|>{\centering}p{2cm}|>{\centering}p{2cm}|>{\centering}p{2cm}|>{\centering}p{2cm}|>{\centering}p{2cm}|>{\centering}p{2cm}|}
\caption{The positive-parameter solutions in $\mathcal{S}^{+}/\approx$}
\label{table1}\tabularnewline
\hline 
$s_{1}$ & $s_{2}$ & $s_{3}$ & $t_{1}$ & $t_{2}$ & $t_{3}$\tabularnewline
\hline 
$x$ & $y$ & $z$ & $x$ & $y$ & $z$\tabularnewline
\hline 
$1/2$ & $x$ & $-x$ & $1/2$ & $y$ & $-y$\tabularnewline
\hline 
$x$ & $1/2+y$ & $1/2-y$ & $y$ & $1/2+x$ & $1/2-x$\tabularnewline
\hline 
$x$ & $y$ & $1/2-x+y$ & $2x$ & $2y$ & $1/2+y$\tabularnewline
\hline 
$x$ & $3x$ & $1/2+4x$ & $4x$ & $12x$ & $1/2+6x$\tabularnewline
\hline 
$1/6$ & $1/3$ & $x$ & $1/2+x$ & $1/3$ & $-2x$\tabularnewline
\hline 
$1/6$ & $x$ & $1/6+x$ & $1/3+x$ & $3x$ & $1/3+2x$\tabularnewline
\hline 
$1/6$ & $x$ & $1/6+x$ & $1/2+x$ & $2x$ & $1/2+3x$\tabularnewline
\hline 
$1/6$ & $x$ & $1/6+x$ & $1/2+x$ & $4x$ & $1/2+2x$\tabularnewline
\hline 
$1/6$ & $x$ & $1/2+x$ & $1/2$ & $1/6+x$ & $1/2+3x$\tabularnewline
\hline 
$1/6$ & $x$ & $2x$ & $1/6+x$ & $5/6+2x$ & $1/2+3x$\tabularnewline
\hline 
$1/6$ & $x$ & $2x$ & $1/6+x$ & $2/3+4x$ & $1/3+2x$\tabularnewline
\hline 
$1/6$ & $x$ & $2x$ & $1/3+2x$ & $3x$ & $6x$\tabularnewline
\hline 
$1/6$ & $x$ & $1/6+2x$ & $1/3+x$ & $2x$ & $1/3+4x$\tabularnewline
\hline 
$1/6$ & $x$ & $1/3+2x$ & $1/3+x$ & $2x$ & $1/2+3x$\tabularnewline
\hline 
$1/6$ & $x$ & $3x$ & $1/6+x$ & $5/6+3x$ & $1/3+2x$\tabularnewline
\hline 
$1/6$ & $x$ & $3x$ & $1/2+x$ & $1/6+2x$ & $1/3+4x$\tabularnewline
\hline 
$1/6$ & $x$ & $4x$ & $1/3+x$ & $5/6+4x$ & $1/2+2x$\tabularnewline
\hline 
$1/6$ & $x$ & $-x$ & $1/3+x$ & $1/6+x$ & $2/3+2x$\tabularnewline
\hline 
$1/6$ & $x$ & $1/2-x$ & $1/6+x$ & $x$ & $1/2+3x$\tabularnewline
\hline 
$1/6$ & $x$ & $2/3-x$ & $1/6+x$ & $5/6$ & $1/6-x$\tabularnewline
\hline 
$1/6$ & $x$ & $2/3-x$ & $1/6+2x$ & $2x$ & $1/3+4x$\tabularnewline
\hline 
$1/6$ & $x$ & $-2x$ & $1/3+x$ & $1/6+x$ & $1/2+3x$\tabularnewline
\hline 
$1/6$ & $x$ & $2/3-2x$ & $1/2+x$ & $5/6$ & $1/2-3x$\tabularnewline
\hline 
$1/6$ & $1/3+2x$ & $1/2+3x$ & $x$ & $3x$ & $1/3+4x$\tabularnewline
\hline 
$1/6$ & $1/3+2x$ & $1/2+4x$ & $x$ & $3x$ & $1/2+4x$\tabularnewline
\hline 
$1/6$ & $1/2+2x$ & $2/3+2x$ & $x$ & $4x$ & $1/2+3x$\tabularnewline
\hline 
$1/6$ & $1/2+2x$ & $1/2+6x$ & $x$ & $1/6+3x$ & $2/3+4x$\tabularnewline
\hline 
$1/3$ & $x$ & $5/6+2x$ & $1/3$ & $4x$ & $1/2+x$\tabularnewline
\hline 
$1/3$ & $x$ & $-x$ & $1/3$ & $1/6+x$ & $2/3-2x$\tabularnewline
\hline 
$1/3$ & $x$ & $5/6-x$ & $1/3$ & $5/6+2x$ & $1/3-4x$\tabularnewline
\hline 
$1/2$ & $x$ & $1/6-x$ & $1/2$ & $1/6+x$ & $1/2-3x$\tabularnewline
\hline 
$x$ & $2x$ & $3x$ & $1/6+x$ & $1/2+3x$ & $2/3+2x$\tabularnewline
\hline 
$x$ & $1/6+2x$ & $1/2+3x$ & $5/6+2x$ & $1/2+6x$ & $1/2+3x$\tabularnewline
\hline 
$x$ & $1/6+2x$ & $4x$ & $5/6+x$ & $1/3+4x$ & $1/2+2x$\tabularnewline
\hline 
$x$ & $1/6+2x$ & $2/3+5x$ & $5/6+x$ & $1/2+6x$ & $1/2+2x$\tabularnewline
\hline 
$x$ & $1/3+2x$ & $3x$ & $5/6+x$ & $1/6+2x$ & $1/3+4x$\tabularnewline
\hline 
$x$ & $3x$ & $9x$ & $1/6+x$ & $1/2+4x$ & $2/3+3x$\tabularnewline
\hline 
$x$ & $1/2+3x$ & $4x$ & $1/6+2x$ & $2/3+4x$ & $1/2+6x$\tabularnewline
\hline 
$x$ & $4x$ & $1/3+5x$ & $1/6+x$ & $1/2+6x$ & $2/3+4x$\tabularnewline
\hline 
$1/12$ & $1/4$ & $5/6$ & $x$ & $1/2$ & $-x$\tabularnewline
\hline 
$1/12$ & $x$ & $5/12+x$ & $1/6+x$ & $3x$ & $1/3+2x$\tabularnewline
\hline 
$1/12$ & $x$ & $5/12+x$ & $2/3+x$ & $1/3+2x$ & $2/3+4x$\tabularnewline
\hline 
$1/12$ & $x$ & $5/12+x$ & $1/3+2x$ & $6x$ & $1/2+3x$\tabularnewline
\hline 
$1/12$ & $1/3+2x$ & $3/4+2x$ & $x$ & $3x$ & $1/2+4x$\tabularnewline
\hline 
\end{longtable}

After introducing some necessary preliminaries in Section \ref{section2},
we will give the proofs of Theorem \ref{theorem7} and \ref{theorem3}
in Section \ref{section3} and \ref{section4}, respectively. As the
final results suggest, the proofs consist mostly of algorithms and
computations, which are realizable with the help of Mathematica 11.0
\citep{Mathematica}. The total running time is about seven hours
on the author's personal laptop. The source code will be attached
with this article.

Whenever we perform the computations on the computer, we will always
apply the most efficient algorithms we have. But when we describe
the algorithms in this article, we will simply outline their key ideas
in order to avoid too many cumbersome details. As a compensation,
we will provide some concrete examples for better illustration.

\section{\label{section2}Preliminaries}

\subsection{\label{section2.1}Vanishing sums of roots of unity}

Without loss of generality (since the group $\mu_{\infty}$ is divisible),
suppose that $[s_{1}^{2},s_{2}^{2},s_{3}^{2},t_{1}^{2},t_{2}^{2},t_{3}^{2}]\in\mathcal{S}$.
Define
\[
\begin{cases}
\begin{aligned} & S_{1}=\overline{s_{1}}s_{2}s_{3}, &  & S_{2}=s_{1}\overline{s_{2}}s_{3}, &  & S_{3}=s_{1}s_{2}\overline{s_{3}}, &  & T_{1}=\overline{t_{1}}t_{2}t_{3}, &  & T_{2}=t_{1}\overline{t_{2}}t_{3}, &  & T_{3}=t_{1}t_{2}\overline{t_{3}},\\
 & A_{1}=S_{3}T_{2}, &  & A_{2}=S_{1}T_{3}, &  & A_{3}=S_{2}T_{1}, &  & B_{1}=-\overline{S_{2}T_{3}}, &  & B_{2}=-\overline{S_{3}T_{1}}, &  & B_{3}=-\overline{S_{1}T_{2}},\\
 & A_{1}'=S_{3}\overline{T_{2}}, &  & A_{2}'=S_{1}\overline{T_{3}}, &  & A_{3}'=S_{2}\overline{T_{1}}, &  & B_{1}'=-\overline{S_{2}}T_{3}, &  & B_{2}'=-\overline{S_{3}}T_{1}, &  & B_{3}'=-\overline{S_{1}}T_{2},
\end{aligned}
\end{cases}
\]
where $\overline{\xi}=1/\xi$ represents the complex conjugate of
$\xi\in\mu_{\infty}$, then
\begin{eqnarray}
0 & = & [(s_{3}^{2}-s_{1}^{2})(s_{2}^{2}-1)(t_{2}^{2}-t_{1}^{2})(t_{3}^{2}-1)-(s_{2}^{2}-s_{1}^{2})(s_{3}^{2}-1)(t_{3}^{2}-t_{1}^{2})(t_{2}^{2}-1)]/(s_{1}s_{2}s_{3}t_{1}t_{2}t_{3})\nonumber \\
 & = & [(S_{3}+\overline{S_{3}})(T_{2}+\overline{T_{2}})+(S_{1}+\overline{S_{1}})(T_{3}+\overline{T_{3}})+(S_{2}+\overline{S_{2}})(T_{1}+\overline{T_{1}})]-\nonumber \\
 &  & [(S_{2}+\overline{S_{2}})(T_{3}+\overline{T_{3}})+(S_{3}+\overline{S_{3}})(T_{1}+\overline{T_{1}})+(S_{1}+\overline{S_{1}})(T_{2}+\overline{T_{2}})]\label{equation1}\\
 & = & A_{1}+A_{2}+A_{3}+B_{1}+B_{2}+B_{3}+A_{1}'+A_{2}'+A_{3}'+B_{1}'+B_{2}'+B_{3}'+\nonumber \\
 &  & \overline{A_{1}}+\overline{A_{2}}+\overline{A_{3}}+\overline{B_{1}}+\overline{B_{2}}+\overline{B_{3}}+\overline{A_{1}'}+\overline{A_{2}'}+\overline{A_{3}'}+\overline{B_{1}'}+\overline{B_{2}'}+\overline{B_{3}'}\nonumber 
\end{eqnarray}
and
\begin{equation}
\begin{cases}
A_{1}A_{2}A_{3}B_{1}B_{2}B_{3}A_{1}'A_{2}'A_{3}'B_{1}'B_{2}'B_{3}'=1,\\
A_{1}A_{2}A_{3}B_{1}B_{2}B_{3}=A_{1}'A_{2}'A_{3}'B_{1}'B_{2}'B_{3}'=-1,\\
A_{2}B_{3}A_{2}'B_{3}'=A_{3}B_{1}A_{3}'B_{1}'=A_{1}B_{2}A_{1}'B_{2}'=1,\\
A_{3}B_{2}\overline{A_{3}'B_{2}'}=A_{1}B_{3}\overline{A_{1}'B_{3}'}=A_{2}B_{1}\overline{A_{2}'B_{1}'}=1.
\end{cases}\label{equation2}
\end{equation}
Conversely, given $(A_{1},A_{2},A_{3},B_{1},B_{2},B_{3},A_{1}',A_{2}',A_{3}',B_{1}',B_{2}',B_{3}')$
satisfying (\ref{equation1}) and (\ref{equation2}), then
\begin{equation}
[s_{1}^{2},s_{2}^{2},s_{3}^{2},t_{1}^{2},t_{2}^{2},t_{3}^{2}]=[-A_{3}\overline{B_{2}'},-A_{1}\overline{B_{3}'},-A_{2}\overline{B_{1}'},-A_{2}B_{3}',-A_{3}B_{1}',-A_{1}B_{2}']\label{equation3}
\end{equation}
is almost a solution (which may be degenerate sometimes) in $\mathcal{S}$.

Thus we are led to study how $24$ roots of unity can sum to zero.
More generally, we consider the cyclotomic relation
\[
G=\sum_{i=1}^{m}\xi_{i}=0,
\]
where $\xi_{i}\in\mu_{\infty}$. This topic has been studied by many
people. Among them, the article of Poonen and Rubinstein \citep{MR1612877}
enlightens us quite a lot. In their work, in order to count the number
of interior intersection points and the number of regions made by
the diagonals inside a regular polygon, they classify the cyclotomic
relations up to $m=12$.

We will use their method substantially in the sequel. Let us first
collect some of their notations. We call $w(G)=m$ the weight of $G$.
$G$ is said to be minimal if $\sum_{i\in I}\xi_{i}\neq0$ for any
nonempty and proper subset $I\subseteq\{1,\cdots,m\}$. Any relation
can be decomposed as a sum of minimal relations (which may not be
unique). Write $\zeta_{n}=\exp(2\pi i/n)$ for any positive integer
$n$. For each prime $p$, let $R_{p}$ be the relation
\[
1+\zeta_{p}+\cdots+\zeta_{p}^{p-1}=0.
\]
Its minimality follows from the irreducibility of the cyclotomic polynomial.
Also we can ``rotate'' any relation by multiplying an arbitrary
root of unity to obtain a new relation. If we subtract $R_{3}$ from
$R_{5}$, cancel the $1$'s and incorporate the minus signs into the
roots of unity, we obtain a new minimal relation
\[
\zeta_{6}+\zeta_{6}^{5}+\zeta_{5}+\zeta_{5}^{2}+\zeta_{5}^{3}+\zeta_{5}^{4}=0,
\]
which we will denote $(R_{5}:R_{3})$. In general, if $G$ and $H_{1},\cdots,H_{j}$
are relations, we will use the notation $(G:H_{1},\cdots,H_{j})$
to denote any relation obtained by rotating the $H_{i}$ so that each
shares exactly one root of unity with $G$ which is different for
each $i$, subtracting them from $G$, and incorporating the minus
signs into the roots of unity. For notational convenience, we will
write $(R_{5}:4R_{3})$ for $(R_{5}:R_{3},R_{3},R_{3},R_{3})$, for
example. Note that in general there will be many relations of type
$(G:H_{1},\cdots,H_{j})$ up to rotational equivalence.

The following results will be needed in our arguments.
\begin{lem}
\label{lemma8}\textup{{[}\citealp[Theorem 1]{MR0191892}; \citealp[Lemma 1]{MR1612877}{]}}
If the relation $G=\sum_{i=1}^{m}\xi_{i}=0$ is minimal, then there
are distinct primes $p_{1}<\cdots<p_{k}\leq m$ so that each $\xi_{i}$
is a $p_{1}\cdots p_{k}$-th root of unity, after the relation has
been suitably rotated.
\end{lem}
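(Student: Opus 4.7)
The plan is to show that the minimal modulus $N$ of a rotated version of the given relation must be squarefree and have all prime factors at most $m$. To set up, I would define $N$ to be the smallest positive integer for which some rotation of the relation has all $\xi_{i}\in\mu_{N}$, and fix one such rotation. For each prime $p$ we wish to exclude from dividing $N$ in an unwanted way, the strategy is to derive a dichotomy: either the relation splits as a nontrivial vanishing sub-sum (contradicting minimality of $G$), or all the $\xi_{i}$ share a common residue modulo $p$ so that a further rotation strictly decreases $N$ (contradicting minimality of $N$).

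For squarefreeness, suppose $p^{2}\mid N$ and set $M=N/p$, so $p\mid M$ and $[\mathbb{Q}(\zeta_{N}):\mathbb{Q}(\zeta_{M})]=p$ with $\{1,\zeta_{N},\ldots,\zeta_{N}^{p-1}\}$ a basis. Each $\xi_{i}=\zeta_{N}^{e_{i}}$ factors uniquely as $\zeta_{N}^{r_{i}}\eta_{i}$ with $0\leq r_{i}<p$ and $\eta_{i}\in\mu_{M}$. Equating basis coefficients in $\sum_{i}\xi_{i}=\sum_{r}\zeta_{N}^{r}\bigl(\sum_{i:\,r_{i}=r}\eta_{i}\bigr)=0$ forces $\sum_{i:\,r_{i}=r}\eta_{i}=0$ for every $r$. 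If two or more residues actually occur, each nonempty class yields a proper vanishing sub-relation; if only one residue $r$ occurs, the rotation $\xi_{i}\mapsto\zeta_{N}^{-r}\xi_{i}$ sends every $\xi_{i}$ into $\mu_{M}$. Either outcome contradicts a minimality hypothesis.

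For the bound $p\leq m$, suppose $p\mid N$ with $p>m$ and write $N=pM$ with $\gcd(p,M)=1$. Using CRT, pick a primitive $p$-th root $\omega$ and a primitive $M$-th root $\eta$ generating $\mathbb{Q}(\zeta_{N})$ and write $\xi_{i}=\omega^{a_{i}}\eta^{b_{i}}$. The only $\mathbb{Q}(\zeta_{M})$-linear relation among $1,\omega,\ldots,\omega^{p-1}$ is $\sum_{a=0}^{p-1}\omega^{a}=0$, so letting $c_{a}=\sum_{i:\,a_{i}=a}\eta^{b_{i}}$ forces $c_{0}=c_{1}=\cdots=c_{p-1}$. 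Because $m<p$, at least one residue class is empty, whence all $c_{a}=0$. The same dichotomy as before applies: either several classes are nonempty, producing proper sub-relations, or only one is nonempty, producing a rotation that reduces $N$.

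The main delicate point is the rotational bookkeeping rather than any single algebraic step: in each argument I must verify that the ``single-class'' degenerate case really furnishes a rotation that strictly lowers $N$, and that the multi-class case really produces nonempty proper sub-sums whose vanishing is witnessed by the basis decomposition. The underlying statement that $\{1,\zeta_{N},\ldots,\zeta_{N}^{p-1}\}$ is a $\mathbb{Q}(\zeta_{M})$-basis---equivalently, the irreducibility of $x^{p}-\zeta_{M}$ in the relevant regime---is a standard cyclotomic fact I would cite rather than reprove.
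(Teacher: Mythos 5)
The paper does not supply a proof of this lemma; it simply cites Mann's theorem and Poonen--Rubinstein's Lemma~1. Your reconstruction is correct, and it is essentially the standard argument behind those references: define the minimal conductor $N$ attainable by rotation, then run the ``class decomposition versus further rotation'' dichotomy twice, once to force squarefreeness and once to bound the prime divisors.

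Both halves check out. In the squarefreeness step, $[\mathbb{Q}(\zeta_N):\mathbb{Q}(\zeta_M)]=p$ and the irreducibility of $x^p-\zeta_M$ make $\{1,\zeta_N,\ldots,\zeta_N^{p-1}\}$ a $\mathbb{Q}(\zeta_M)$-basis, so the class sums $\sum_{i:r_i=r}\eta_i$ all vanish; since $\sum_{i:r_i=r}\xi_i=\zeta_N^r\sum_{i:r_i=r}\eta_i$, each nonempty class is itself a vanishing sub-sum of $G$, and the dichotomy (two or more nonempty classes versus exactly one) cleanly contradicts minimality of $G$ or of $N$, respectively. In the $p>m$ step, since $\gcd(p,M)=1$ the kernel of $(c_0,\ldots,c_{p-1})\mapsto\sum_a c_a\omega^a$ over $\mathbb{Q}(\zeta_M)$ is one-dimensional, spanned by $(1,\ldots,1)$, so the $c_a$ are all equal; the pigeonhole observation $m<p$ then makes them all zero, and the same dichotomy applies. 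The one bookkeeping point you rightly flagged, namely that the ``single-class'' rotation must be composed with the rotation already used to place $G$ into $\mu_N$ so that it genuinely exhibits a rotation of the original relation into $\mu_M$ with $M<N$, is the only place a careless write-up could slip, and you have stated it correctly.
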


\begin{lem}
\textup{\label{lemma9}\citep[Lemma 2]{MR1612877}} The only minimal
relations (up to rotation) involving only the $2p$-th roots of unity,
for $p$ prime, are $R_{2}$ and $R_{p}$.
\end{lem}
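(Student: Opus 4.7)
The plan is to convert a minimal vanishing sum of $2p$-th roots of unity into a divisibility statement for an integer polynomial of degree less than $p$, and then to exploit the minimality of $\Phi_p$ to conclude.

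Assume first that $p$ is an odd prime. Every $2p$-th root of unity can be written uniquely as $\varepsilon\zeta_p^a$ with $\varepsilon\in\{+1,-1\}$ and $0\le a<p$. Given a minimal relation $\sum_{i=1}^m\xi_i=0$ with each $\xi_i$ a $2p$-th root of unity, I would let $n_i^+$ (resp.\ $n_i^-$) denote the number of terms equal to $\zeta_p^i$ (resp.\ $-\zeta_p^i$) and set $c_i:=n_i^+-n_i^-$. The relation then reads $P(\zeta_p)=0$ for the integer polynomial $P(x):=\sum_{i=0}^{p-1}c_ix^i$. Since $\deg P<p=\deg\Phi_p$ and $\Phi_p$ is the minimal polynomial of $\zeta_p$ over $\mathbb{Q}$, the polynomial $P$ must be a scalar multiple of $\Phi_p(x)=1+x+\cdots+x^{p-1}$, so every $c_i$ equals a single integer $c$.

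I would then split on the sign of $c$. If $c=0$, then $n_i^+=n_i^-$ for every $i$; any index $i$ with $n_i^\pm\geq 1$ produces a sub-relation $\zeta_p^i+(-\zeta_p^i)=0$, so minimality forces exactly one such pair and the result is a rotation of $R_2$. If $c>0$, then $n_i^+\geq c\geq 1$ for every $i$, so $\sum_{i=0}^{p-1}\zeta_p^i=0$ is a sub-relation; minimality forces this to be the entire relation, i.e., $c=1$ and $n_i^-=0$ for all $i$, yielding $R_p$. The case $c<0$ is symmetric. For $p=2$ the claim reduces to enumerating the subsets of $\{\pm 1,\pm i\}$ that sum to zero, and a direct inspection again returns only rotations of $R_2$.

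The main obstacle is not any deep number theory but rather careful bookkeeping: checking that the minimality hypothesis is invoked correctly at each step, and observing that rotation by an element of $\mu_\infty$ applied to a relation supported on $\mu_{2p}$ remains supported on $\mu_{2p}$ precisely when one rotates by an element of $\mu_{2p}$, so the $\varepsilon\zeta_p^a$ normal form used above is preserved. The whole argument ultimately rests on the elementary fact that $\deg\Phi_p=p-1$, which converts an a priori analytic identity among roots of unity into trivial coefficient counting.
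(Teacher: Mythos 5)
Your argument is correct and is essentially the same as the one in the cited source, Lemma~2 of Poonen--Rubinstein: split each $2p$-th root of unity (for $p$ odd) into its sign and its $\mu_p$-component, convert the relation into an integer polynomial $P$ of degree at most $p-1$ vanishing at $\zeta_p$, invoke the irreducibility of $\Phi_p$ to conclude that all coefficients $c_i$ are equal, and then read off sub-relations to which minimality applies. One small slip: you wrote ``$\deg P<p=\deg\Phi_p$,'' but in fact $\deg\Phi_p=p-1$; the intended (and correct) comparison is $\deg P\le p-1=\deg\Phi_p$, which together with $\Phi_p\mid P$ forces $P=c\Phi_p$ for some integer $c$ (with $c=0$ allowed). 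Your treatment of $p=2$ is stated as an enumeration of subsets of $\{\pm1,\pm i\}$, which omits relations with repeated terms; the honest version counts multiplicities $(n_0,n_1,n_2,n_3)$ and uses $n_0=n_2$, $n_1=n_3$, after which minimality immediately reduces to $\{1,-1\}$ or $\{i,-i\}$. Both are cosmetic issues; the substance of the proof is right.
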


\begin{lem}
\textup{\label{lemma10}\citep[Lemma 3]{MR1612877}} Suppose $G$
is a minimal relation, and $p_{1}<\cdots<p_{k}$ are picked as in
Lemma \ref{lemma8} with $p_{1}=2$ and $p_{k}$ minimal. If $w(G)<2p_{k}$,
then $G$ (or a rotation) is of the form $(R_{p_{k}}:H_{1},\cdots,H_{j})$
where the $H_{i}$ are minimal relations not equal to $R_{2}$ and
involving only $p_{1}\cdots p_{k-1}$-th roots of unity, such that
$j<p_{k}$ and $\sum_{i=1}^{j}[w(H_{i})-2]=w(G)-p_{k}$.
\end{lem}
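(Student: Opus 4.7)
The plan is to exploit the decomposition $\mu_{Np_k}\cong\mu_N\times\mu_{p_k}$, where $N:=p_1\cdots p_{k-1}$, together with the irreducibility of $\Phi_{p_k}$ over $\mathbb{Q}(\zeta_N)$, to force $G$ into the shape of $R_{p_k}$. After rotating $G$ as allowed by Lemma \ref{lemma8}, every term can be uniquely written $\xi_i=\zeta_{p_k}^{a_i}\eta_i$ with $a_i\in\{0,\ldots,p_k-1\}$ and $\eta_i\in\mu_N$. I would then group by coset, setting $w_a:=\#\{i:a_i=a\}$ and $S_a:=\sum_{a_i=a}\eta_i$, so that $G=\sum_{a=0}^{p_k-1}\zeta_{p_k}^a S_a=0$ and $w(G)=\sum_a w_a$. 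Because $\gcd(N,p_k)=1$, the polynomial $\Phi_{p_k}(X)=1+X+\cdots+X^{p_k-1}$ remains the minimal polynomial of $\zeta_{p_k}$ over $\mathbb{Q}(\zeta_N)$, so $\sum_a S_a X^a\in\mathbb{Q}(\zeta_N)[X]$, being of degree less than $p_k$ and vanishing at $X=\zeta_{p_k}$, must be a $\mathbb{Q}(\zeta_N)$-scalar multiple of $\Phi_{p_k}$. Consequently there is some $C\in\mathbb{Q}(\zeta_N)$ with $S_a=C$ for every $a$.

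Next I would rule out $C=0$: otherwise each $S_a=0$ would itself be a relation in $\mu_N$, and minimality of $G$ would force all but one coset to be empty, so $G$ (after rotation) would lie entirely in $\mu_N$, contradicting the minimality of $p_k$. Hence $C\neq 0$, every $w_a\geq 1$, and $w(G)\geq p_k$. Combined with the hypothesis $w(G)<2p_k$, at least one coset has $w_a=1$, and its unique term equals $C$; in particular $C\in\mu_N$. After rotating $G$ by $C^{-1}$ and by a suitable power of $\zeta_{p_k}$, I may assume $C=1$ and that the singleton coset sits at $a=0$.

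For each coset $a$ with $w_a\geq 2$, define $H_a:=S_a-1$, a vanishing sum of $w_a+1\geq 3$ elements of $\mu_N$; in particular $H_a\neq R_2$ and $H_a$ involves only $p_1\cdots p_{k-1}$-th roots of unity. To see that $H_a$ is minimal, observe that any proper vanishing sub-sum of $H_a$ has a complement which is also proper and vanishing; at least one of these two halves omits the adjoined $-1$ and hence lies entirely within $S_a$, so multiplying it by $\zeta_{p_k}^a$ produces a non-trivial proper vanishing sub-sum of $G$, contradicting minimality of $G$. Rearranging the identity $G=R_{p_k}+\sum_{a:w_a\geq 2}\zeta_{p_k}^a H_a$ then displays $G$ as $(R_{p_k}:H_{a_1},\ldots,H_{a_j})$ with $j=\#\{a:w_a\geq 2\}$, because each rotated summand $-\zeta_{p_k}^a H_a$ shares precisely the term $\zeta_{p_k}^a$ of $R_{p_k}$ (the identity $\mu_N\cap\zeta_{p_k}^{\mathbb{Z}}=\{1\}$, valid since $\gcd(N,p_k)=1$, prevents the other terms $-\zeta_{p_k}^a\eta_i$ from matching any $\zeta_{p_k}^{a'}$). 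The singleton coset gives $j\leq p_k-1<p_k$, and the weight identity $\sum_l(w(H_{a_l})-2)=\sum_l(w_{a_l}-1)=w(G)-p_k$ is immediate.

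The main obstacle is the minimality of the $H_a$: the adjoined $-1$ is foreign to $G$, so minimality does not transfer automatically and must be extracted by the sub-sum argument above. A related subtlety arises when $-1$ already appears among the $\eta_i$ in $S_a$, producing a duplicate $-1$ in $H_a$; the sub-sum argument still applies but needs careful bookkeeping of multiplicities.
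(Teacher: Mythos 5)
This lemma is cited from Poonen--Rubinstein \citep[Lemma 3]{MR1612877}; the paper itself does not supply a proof, so there is no in-paper argument to compare against. Your reconstruction is correct and is essentially the standard (and, as far as I recall, Poonen--Rubinstein's own) argument: decompose by the $\zeta_{p_k}$-coset, use that $[\mathbb{Q}(\zeta_{Np_k}):\mathbb{Q}(\zeta_N)]=p_k-1$ makes $\Phi_{p_k}$ the minimal polynomial of $\zeta_{p_k}$ over $\mathbb{Q}(\zeta_N)$ to force $S_a\equiv C$, rule out $C=0$ via minimality of $G$ and minimality of $p_k$, deduce $C\in\mu_N$ from a singleton coset (which exists since $w(G)<2p_k$), normalize $C=1$, and then read off each nontrivial coset as an $H_a$ glued to $R_{p_k}$ along the adjoined $-1$. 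The weight bookkeeping $\sum_l[w(H_{a_l})-2]=\sum_a(w_a-1)=w(G)-p_k$ and the bound $j\le p_k-1$ are exactly right, and your complement trick for extracting minimality of $H_a$ from minimality of $G$ (one half of any splitting of $H_a$ omits the adjoined $-1$, hence lives inside coset $a$ of $G$; it cannot be all of that coset since $S_a=1\neq 0$) is clean and correct, and it also correctly kills the would-be counterexample $H_a=2R_2$.

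One small inaccuracy worth flagging: the sentence claiming that $\mu_N\cap\zeta_{p_k}^{\mathbb{Z}}=\{1\}$ prevents any other term of $-\zeta_{p_k}^a H_a$ from landing in $R_{p_k}$ is not quite right as stated. The relevant equation is $-\eta_i\in\mu_{p_k}$, and since $2\mid N$ one has $-\mu_N=\mu_N$, so the conclusion is only $\eta_i=-1$, not a contradiction. You do acknowledge this ``duplicate $-1$'' wrinkle at the end, and your minimality argument is robust to it; the residual issue is purely notational (whether the definition of $(R_{p_k}:H_1,\ldots,H_j)$, which asks for \emph{exactly one} shared root, literally applies when the shared root has multiplicity two in $H_{a_l}'$), and Poonen--Rubinstein presumably gloss over the same point. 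It would tighten the write-up to either observe that minimality of $G$ rules out $\eta_i\in\{1,-1\}$ simultaneously occupying the same coset, or simply to note that the multiset identity $G=R_{p_k}-\sum_l\zeta_{p_k}^{a_l}H_{a_l}$ holds regardless of how the cancellation is bracketed.
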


\begin{lem}
\textup{\label{lemma11}(Modified from \citep[Lemma 4]{MR1612877})}
Suppose the relation $G$ is stable under complex conjugation. If
$R_{p}$, for $p$ prime, occurs in $G$, then it is itself stable
under complex conjugation, or can be paired with another $R_{p}$
which is its complex conjugate.
\end{lem}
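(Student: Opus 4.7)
The plan is to exploit the explicit form of minimal relations of prime weight $p$: by Lemma \ref{lemma9} (or directly by the irreducibility of the $p$-th cyclotomic polynomial), every such relation is of the form $\zeta R_{p}=\{\zeta\zeta_{p}^{k}:0\leq k<p\}$ for some $\zeta\in\mu_{\infty}$, and complex conjugation sends this multiset to $\overline{\zeta}R_{p}$, which is again a rotation of $R_{p}$.

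First I would record an elementary arithmetic observation: for any $\zeta\in\mu_{\infty}$, the two multisets $\zeta R_{p}$ and $\overline{\zeta}R_{p}$ are either identical or disjoint. For if $\zeta\zeta_{p}^{j}=\overline{\zeta}\zeta_{p}^{-k}$ for some $j,k$, then $\zeta^{2}\in\langle\zeta_{p}\rangle$, which forces $\overline{\zeta}\in\zeta\langle\zeta_{p}\rangle$ and hence $\overline{\zeta}R_{p}=\zeta R_{p}$ as multisets.

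Interpreting ``$R_{p}$ occurs in $G$'' as the appearance of some rotation $\zeta R_{p}$ as a summand in a decomposition of $G$ into minimal relations, the $p$ roots of unity $\zeta\zeta_{p}^{k}$ all lie in the term multiset of $G$. Since $G$ is stable under complex conjugation, the $p$ conjugate terms $\overline{\zeta}\zeta_{p}^{-k}$ also lie in $G$ with matching multiplicities, and together they form a second minimal sub-relation $\overline{\zeta}R_{p}$. If $\overline{\zeta}R_{p}=\zeta R_{p}$ then the relation is itself self-conjugate; otherwise, by the dichotomy above the term multisets are disjoint, so removing both $\zeta R_{p}$ and $\overline{\zeta}R_{p}$ from $G$ yields a valid sub-relation $G'$ that is again stable under conjugation. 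An induction on weight then produces a decomposition of $G'$ into minimal relations, and reinserting the pair $\zeta R_{p}+\overline{\zeta}R_{p}$ exhibits a decomposition of $G$ in which the original $R_{p}$ is paired with its complex conjugate.

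The main obstacle, and the reason the conclusion is phrased as ``can be paired'' rather than a statement about every single decomposition, is the non-uniqueness of minimal decompositions: the conjugate copy $\overline{\zeta}R_{p}$ need not literally appear in the decomposition one started with. The disjointness of term multisets in the non-self-conjugate case is precisely what permits the inductive rearrangement into a decomposition that realizes the pairing.
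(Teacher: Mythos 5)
Your proof is correct and is essentially the argument behind the Poonen--Rubinstein lemma that the paper cites without reproving. The key dichotomy --- that $\zeta R_{p}$ and $\overline{\zeta}R_{p}$ share a term only if $\zeta^{2}\in\mu_{p}$, in which case they coincide --- combined with conjugation-stability of $G$ is exactly what lets you redecompose the complement and exhibit the conjugate pair, and your closing remark about needing a possibly different decomposition (rather than expecting the conjugate copy to appear verbatim in the one you started with) correctly identifies the only genuine subtlety in the statement.
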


\begin{lem}
\textup{\label{lemma12}\citep[Lemma 5]{MR1612877}} Let $G$ be a
minimal relation of type $(R_{p}:H_{1},\cdots,H_{j})$, $p\geq5$,
where the $H_{i}$ involve roots of unity of order prime to $p$,
and $j<p$. If $G$ is stable under complex conjugation, then the
particular rotation of $R_{p}$ from which the $H_{i}$ were ``subtracted''
is also stable (and hence so is the collection of the relations subtracted).
\end{lem}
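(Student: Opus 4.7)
The plan is to work inside the cyclotomic ring $\mathbb{Z}[\zeta_{pm}]$, where by Lemma \ref{lemma8} I may take $m$ to be the squarefree product of the primes other than $p$ appearing in $G$; since the $H_{i}$'s involve only roots of unity of order prime to $p$, we have $\gcd(p,m)=1$, so $\{1,\zeta_{p},\ldots,\zeta_{p}^{p-2}\}$ is a $\mathbb{Z}[\zeta_{m}]$-basis of $\mathbb{Z}[\zeta_{pm}]$. Since $\omega R_{p}=\omega\zeta_{p}^{a}R_{p}$ as sets for every $a$, I would first absorb the $p$-part of $\omega$ into a power of $\zeta_{p}$ and assume $\omega$ has order dividing $m$.

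Every root of unity $\xi$ in $G$ then decomposes uniquely as $\xi=\eta\,\zeta_{p}^{k}$ with $\eta$ an $m$-th root of unity, and grouping the signed terms of $G$ by $k$ yields the formal expression $G=\sum_{k=0}^{p-1}\zeta_{p}^{k}\alpha_{k}$ with $\alpha_{k}\in\mathbb{Z}[\zeta_{m}]$. From the algebraic identity $G=0$ and the basis description, all $\alpha_{k}$ are algebraically equal to a common value $\lambda\in\mathbb{Z}[\zeta_{m}]$, a canonical invariant of $G$. I would then compute $\lambda$ from the given decomposition $G=\omega R_{p}-\sum_{i=1}^{j}\omega_{i}H_{i}'$: each $\omega_{i}H_{i}'$ has all its terms concentrated in a single $\alpha_{k}$ (determined by the $p$-part of $\omega_{i}$) and algebraically summing to a rotation of $H_{i}=0$, so it contributes $0$ to every $\alpha_{k}$, while $\omega R_{p}$ contributes exactly $\omega$ to every $\alpha_{k}$. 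Therefore $\lambda=\omega$.

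Complex conjugation acts on the decomposition by $\alpha_{k}\mapsto\overline{\alpha_{-k}}$, so the hypothesis $\overline{G}=G$ (as signed multisets, hence as formal sums) immediately forces $\overline{\lambda}=\lambda$, i.e., $\omega$ is real. Since $\omega$ has finite order coprime to $p$, this forces $\omega=\pm1$, and so $\omega R_{p}=\pm R_{p}$ is conjugation-stable as claimed. For the second assertion, once both $G$ and $\omega R_{p}$ are conjugation-stable, the positive part of $G$ (which equals $\omega R_{p}$ minus the shared-root set $S$) is stable, hence $S\subset\omega R_{p}$ is stable; reassembling the cancellations, the signed multiset $\bigsqcup_{i}(-\omega_{i}H_{i}')$ is also conjugation-stable, proving the stability of the collection.

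The main obstacle is verifying that the invariant $\lambda$ is genuinely attached to $G$ alone and not to the chosen decomposition. This rests on the uniqueness of expansion in the $\mathbb{Z}[\zeta_{m}]$-basis $\{1,\zeta_{p},\ldots,\zeta_{p}^{p-2}\}$ of $\mathbb{Z}[\zeta_{pm}]$, which uses precisely the hypothesis that each $H_{i}$ involves only roots of unity of order prime to $p$ — this ensures $\gcd(p,m)=1$ so that $\Phi_{p}$ remains irreducible over $\mathbb{Q}(\zeta_{m})$.
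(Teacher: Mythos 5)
This lemma is quoted verbatim from Poonen--Rubinstein (the paper's reference \citep{MR1612877}) and the paper supplies no proof of its own, so there is nothing internal to compare against; but your argument is correct and is essentially the expected one. The key step---that the formal $p$-graded components $\alpha_k\in\mathbb{Z}[\zeta_m]$ of $G$ are all equal to the rotation factor $\omega$, because each subtracted $\omega_i H_i$ is a full zero-sum concentrated in a single component while $\omega R_p$ spreads $\omega$ evenly over all $p$ of them---is exactly right, and once $\lambda=\omega$ the induced action $\alpha_k\mapsto\overline{\alpha_{-k}}$ of complex conjugation, evaluated at $k=0$, gives $\omega=\overline{\omega}$, so $\omega=\pm1$ and $\omega R_p$ is stable.

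Two points to tighten. First, you neither need $m$ squarefree nor will you quite get it from Lemma~\ref{lemma8} for a conjugation-stable $G$, which in general lives in $\mu_{2n}$ rather than $\mu_n$; the only property the basis argument uses is $\gcd(m,p)=1$, and that is what you should state. Second, the ``hence'' clause is glossed: the ``positive part of $G$'' is not a priori well-defined from the multiset $G$ alone, since you must first know which terms came from $\omega R_p$. What saves this is a multiplicity count: the $p$-part $\zeta_p^k$ occurs in $G$ with multiplicity $1$ when $k\notin S$ and with multiplicity $w(H_i)-1\geq2$ when $k=k_i$ (using that $H_i\neq R_2$, which is automatic since subtracting $R_2$ is vacuous). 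Conjugation preserves these multiplicities, so $S$ is closed under $k\mapsto-k$, the $R_p$-part and the $H_i$-parts are recoverable from $G$ and the reconstruction is conjugation-equivariant; that is what actually yields stability of the collection of subtracted relations.
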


\addtocounter{table}{-1}
\begin{lem}
\textup{\label{lemma13}\citep[Theorem 3]{MR1612877}} The minimal
relations of weight up to $12$ are as follows:

\begin{longtable}{|c|c|c|c|c|c|}
\hline 
$w=2,3,5,6$ & $R_{2}$ & $R_{3}$ & $R_{5}$ & $(R_{5}:R_{3})$ & \tabularnewline
\hline 
$w=7,8$ & $(R_{5}:2R_{3})$ & $R_{7}$ & $(R_{5}:3R_{3})$ & $(R_{7}:R_{3})$ & \tabularnewline
\hline 
$w=9,10$ & $(R_{5}:4R_{3})$ & $(R_{7}:2R_{3})$ & $(R_{7}:3R_{3})$ & $(R_{7}:R_{5})$ & \tabularnewline
\hline 
$w=11$ & $(R_{7}:4R_{3})$ & $(R_{7}:R_{5},R_{3})$ & $(R_{7}:(R_{5}:R_{3}))$ & $R_{11}$ & \tabularnewline
\hline 
$w=12$ & $(R_{7}:5R_{3})$ & $(R_{7}:R_{5},2R_{3})$ & $(R_{7}:(R_{5}:R_{3}),R_{3})$ & $(R_{7}:(R_{5}:2R_{3}))$ & $(R_{11}:R_{3})$\tabularnewline
\hline 
\end{longtable}
\end{lem}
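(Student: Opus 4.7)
The plan is to prove the classification by strong induction on the weight $w$. For the base cases $w=2,3$, Lemma~\ref{lemma8} restricts the roots of a minimal relation $G$ to the $p$-th roots of unity for some prime $p\leq w$, and minimality then forces $G=R_{2}$ or $G=R_{3}$ respectively (every vanishing sum of $p$-th roots is an integer multiple of $R_{p}$, and $R_{p}$ is itself minimal by the irreducibility of the cyclotomic polynomial).

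For the inductive step, fix $4\leq w\leq 12$ and a minimal relation $G$ of weight $w$ with associated primes $p_{1}<\cdots<p_{k}\leq w$ as in Lemma~\ref{lemma8}. If $k=1$, then $G$ is a rotation of $R_{p_{k}}$ with $w=p_{k}$, contributing $R_{5}$, $R_{7}$, $R_{11}$ to the table. Assume $k\geq 2$. If $w<2p_{k}$, Lemma~\ref{lemma10} puts $G$ (after rotation) in the form
\[
G \;=\; (R_{p_{k}}:H_{1},\ldots,H_{j}), \qquad j<p_{k}, \qquad \sum_{i=1}^{j}(w(H_{i})-2)\;=\;w-p_{k},
\]
with each $H_{i}\neq R_{2}$ a minimal relation involving only $p_{1}\cdots p_{k-1}$-th roots of unity. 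Since $w(H_{i})\geq 3$, each individual $w(H_{i})\leq w-p_{k}+2<w$, so the inductive hypothesis supplies the complete list of admissible $H_{i}$.

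The next step is, for each feasible pair $(p_{k},\{H_{i}\})$, to enumerate the rotationally inequivalent ways of grafting the $H_{i}$ onto $R_{p_{k}}$ (choose which $j$ of the $p_{k}$ rotates of $\zeta_{p_{k}}$ each $H_{i}$ cancels, modulo the cyclic $C_{p_{k}}$-action on $R_{p_{k}}$ and the permutation action on identical $H_{i}$'s), and for every resulting candidate verify minimality by checking directly that no proper non-empty subsum vanishes; the surviving relations are exactly the entries of the table. The remaining regime $w\geq 2p_{k}$ forces $p_{k}\leq 6$, hence $p_{k}\in\{2,3,5\}$: the cases $p_{k}\in\{2,3\}$ are ruled out by Lemma~\ref{lemma9} (which admits only $R_{2}$ and $R_{3}$, both of weight $\leq 3$), and $p_{k}=5$ with $w\in\{10,11,12\}$ and all $\xi_{i}$ among the $30$-th roots of unity is excluded by a bespoke case analysis showing every such relation decomposes into proper rotates of $R_{2}$, $R_{3}$, $R_{5}$ and $(R_{5}:R_{3})$ and hence fails to be minimal.

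The main obstacle is the combinatorial enumeration for $w=11,12$: by that stage the inductive stock of admissible $H_{i}$ already contains $R_{3},R_{5},(R_{5}:R_{3}),(R_{5}:2R_{3}),\ldots$, and the number of multiset partitions $w-p_{k}=\sum(w(H_{i})-2)$ together with the distinct grafting patterns on $R_{p_{k}}$ is substantial. Each minimality check is individually mechanical — a finite test of whether certain partial cyclotomic sums vanish — but their sheer number is what makes the argument long; this is essentially the bookkeeping carried out in \citep[Theorem 3]{MR1612877}, whose conclusion I would simply adopt here.
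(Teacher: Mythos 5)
The paper gives no proof of Lemma~\ref{lemma13}; it simply states the classification by citing \citep[Theorem 3]{MR1612877}. Your sketch is a faithful reconstruction of the inductive strategy behind that cited theorem, using exactly the same lemmas (Lemma~\ref{lemma8}, \ref{lemma9}, \ref{lemma10}) on which the paper relies elsewhere, and you correctly defer the final combinatorial enumeration to the same source, so in substance this is the paper's own approach.

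One small point worth tightening if you wanted to present this as a self-contained argument: the case $p_{k}=5$, $w\geq 10$ need not require a ``bespoke case analysis'' --- the argument of Lemma~\ref{lemma30} in the paper (write $G$ as a rotation of $\sum_{i=0}^{4}f_{i}\zeta_{5}^{i}$ with $f_{i}$ sums over $\mu_{6}$, use $[\mathbb{Q}(\zeta_{30}):\mathbb{Q}(\zeta_{6})]=4$ to force $f_{i}=f_{0}$, then appeal to minimality) shows directly that every minimal relation with largest prime $5$ is $R_{5}$ or $(R_{5}:jR_{3})$, $j<5$, hence has weight at most $9$. This closes the $w\geq 2p_{k}$ regime cleanly rather than by ad hoc decomposition.
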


The reason why we take $[s_{1}^{2},s_{2}^{2},s_{3}^{2},t_{1}^{2},t_{2}^{2},t_{3}^{2}]\in\mathcal{S}$
rather than the more natural $[s_{1},s_{2},s_{3},t_{1},t_{2},t_{3}]\in\mathcal{S}$
at the beginning of this section is to make the relation (\ref{equation1})
stable under complex conjugation, so that Lemma \ref{lemma11} and
\ref{lemma12} can be applied.

\subsection{\label{section2.2}Smith normal form}

\addtocounter{equation}{-1}

The relations (\ref{equation2}) can be reduced to
\[
\begin{cases}
A_{1}A_{2}A_{3}B_{1}B_{2}B_{3}=A_{1}'A_{2}'A_{3}'B_{1}'B_{2}'B_{3}'=-1,\\
A_{2}B_{3}A_{2}'B_{3}'=A_{3}B_{1}A_{3}'B_{1}'=1,\\
A_{3}B_{2}\overline{A_{3}'B_{2}'}=A_{1}B_{3}\overline{A_{1}'B_{3}'}=1,
\end{cases}
\]
which are equivalent to the linear equations
\begin{equation}
\begin{cases}
a_{1}+a_{2}+a_{3}+b_{1}+b_{2}+b_{3}=a_{1}'+a_{2}'+a_{3}'+b_{1}'+b_{2}'+b_{3}'=1/2,\\
a_{2}+b_{3}+a_{2}'+b_{3}'=a_{3}+b_{1}+a_{3}'+b_{1}'=0,\\
a_{3}+b_{2}-a_{3}'-b_{2}'=a_{1}+b_{3}-a_{1}'-b_{3}'=0,
\end{cases}\tag{2\ensuremath{'}}\label{equation2'}
\end{equation}
where the lower case unknowns are the images of corresponding upper
case unknowns under $\textup{Arg}/(2\pi)$. Note that the numbers
on the right-hand side of (\ref{equation2'}) belong to $\mathbb{Q}/\mathbb{Z}$.
An important tool to deal with such equations is the following well-known
result in elementary algebra.

\addtocounter{equation}{1}
\begin{thm}
\label{theorem14}Let $M$ be an $m\times n$ matrix of rank $r$
over $\mathbb{Z}$. There exist invertible $m\times m$ and $n\times n$
matrices $S$ and $T$ such that the product $SMT$ is of the form
$D=\begin{pmatrix}D_{0} & 0\\
0 & 0
\end{pmatrix}$, where $D_{0}$ is an $r\times r$ diagonal matrix with positive
entries $d_{i}$ such that $d_{i}\mid d_{i+1}$ for any $1\leq i<r$.
$D$ is called the Smith normal form of $M$.
\end{thm}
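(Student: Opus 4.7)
The plan is to prove Theorem~\ref{theorem14} by constructive reduction, producing $S$ and $T$ as explicit finite products of elementary integer matrices. Since row operations (swap two rows, add an integer multiple of one row to another, negate a row) correspond to left-multiplication by invertible integer matrices, and analogous column operations correspond to right-multiplication, it suffices to show that any integer matrix $M$ can be brought into the stated diagonal form by a finite sequence of such operations.

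First I would describe the inner loop that clears the first row and first column. Locate a nonzero entry of minimal absolute value and bring it to position $(1,1)$ by row and column swaps; call this entry $a$. For every other entry $a_{1j}$ in the first row, write $a_{1j}=qa+r$ with $0\le r<|a|$ by Euclidean division and subtract $q$ times the first column from the $j$-th column; either $r=0$, or we now have a nonzero entry of strictly smaller absolute value, which we swap into position $(1,1)$ and restart. Do the same for the first column. Because the positive integer $|a|$ strictly decreases at every restart, this inner loop terminates with the first row and first column zero except at $(1,1)$.

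Next I would enforce divisibility of the pivot. Suppose some entry $a_{ij}$ in the lower-right $(m-1)\times(n-1)$ submatrix is not divisible by the current pivot $a$. Add row $i$ to row $1$; now the first row contains $a_{ij}$, which is not a multiple of $a$, so rerunning the inner loop produces a strictly smaller pivot. Iterating, the process must terminate (again by descent on $|a|$) in a configuration where the pivot divides every entry of the matrix. At that point recurse on the lower-right $(m-1)\times(n-1)$ submatrix; the divisibility condition $d_i\mid d_{i+1}$ is preserved because every subsequent row/column operation stays within the submatrix and only creates $\mathbb{Z}$-linear combinations of entries already divisible by $d_i$. After $\min(m,n)$ recursive steps, the matrix is diagonal with the required divisibility chain; finally, negate rows as needed so each $d_i$ is positive, and discard trailing zero diagonal entries to identify $r$ with the rank.

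The main obstacle, and the only nontrivial point, is the divisibility step: a naive diagonalization (simultaneous clearing of rows and columns without the row-$i$-to-row-$1$ trick) yields a diagonal matrix but not one whose diagonal entries form a divisibility chain. The conceptual content is the termination argument, which in both loops rests on the well-ordering of $\mathbb{N}$ applied to $|a|$; writing that out carefully is the one place where one should be explicit rather than hand-wavy. Uniqueness of the $d_i$ is not asserted by the statement, so I would not pursue it; however, if desired it follows quickly from the invariance of the ideals generated by the $k\times k$ minors under the row and column operations used above.
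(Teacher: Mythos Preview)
Your argument is correct and is the standard constructive proof of the Smith normal form via elementary row and column operations, Euclidean division, and descent on the pivot's absolute value. There is nothing to compare against: the paper does not prove Theorem~\ref{theorem14} at all, but simply states it as a ``well-known result in elementary algebra'' and uses it as a black box for solving the linear system~(\ref{equation2'}) over $\mathbb{Q}/\mathbb{Z}$.
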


\subsection{\label{section2.3}Cyclotomic units}

We should note that although the method developed by Poonen and Rubinstein
works perfectly for their own problem, it is not enough for us. The
main reason is, briefly speaking, our weight $24$ is much larger
than their weight $12$. More details will be explained in Section
\ref{section3.7}.

If $[s_{1},s_{2},s_{3},t_{1},t_{2},t_{3}]\in\mathcal{S}$, then
\begin{equation}
\frac{(s_{3}/s_{1}-1)(s_{2}-1)}{(s_{2}/s_{1}-1)(s_{3}-1)}=\frac{(t_{3}/t_{1}-1)(t_{2}-1)}{(t_{2}/t_{1}-1)(t_{3}-1)}\in\mathbb{R}.\label{equation4}
\end{equation}
The numbers of the form $\xi-1$ with $\xi\in\mu_{\infty}$ are related
to the so-called cyclotomic units (whose definition is irrelevant
to us). The following fundamental result regarding the cyclotomic
units will be needed in Algorithm \ref{algorithm37}.
\begin{thm}
\textup{\label{theorem15}{[}\citealp{MR0201414}; \citealp{MR0299585};
\citealp[Theorem 8.9]{MR1421575}{]}} Let $n$ be a positive integer
and $E_{n}$ the multiplicative abelian group generated by
\[
\{e_{k}=\left|\zeta_{n}^{k}-1\right|:1\leq k<n\}.
\]
Then for any relation $R$ in $E_{n}$, $R^{2}$ is a $\mathbb{Z}$-linear
combination of the relations
\begin{equation}
e_{k}=e_{-k}\label{equation5}
\end{equation}
and
\begin{equation}
e_{k}=\prod_{pi=k}e_{i}\text{ for prime }p\mid n.\label{equation6}
\end{equation}
\end{thm}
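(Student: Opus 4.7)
The first (easy) direction is to verify that relations~(\ref{equation5}) and~(\ref{equation6}) really do hold in $E_{n}$. Relation~(\ref{equation5}) is just $|\overline{z}|=|z|$. For relation~(\ref{equation6}) (valid when $p\mid n$ and $p\mid k$), I would plug $y=\zeta_{n}^{k/p}$ into $y^{p}-1=\prod_{j=0}^{p-1}(y-\zeta_{p}^{j})$ to obtain $\zeta_{n}^{k}-1=\prod_{j=0}^{p-1}(\zeta_{n}^{k/p}-\zeta_{p}^{j})$; taking absolute values yields $e_{k}=\prod_{j=0}^{p-1}e_{k/p-jn/p}$, and the exponents $k/p-jn/p\pmod{n}$ traverse exactly the indices $i$ with $pi\equiv k\pmod{n}$.

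The converse is the real content. Let $F$ be the free abelian group on symbols $\tilde{e}_{k}$ for $1\leq k<n$, let $K\subseteq F$ be the subgroup generated by the lifts of~(\ref{equation5}) and~(\ref{equation6}), and let $\phi:F\to E_{n}$ be the canonical surjection. The goal is $2\ker\phi\subseteq K$. Since $E_{n}\subseteq\mathbb{R}_{>0}$ is torsion-free, this reduces to showing that the torsion subgroup of $F/K$ is $2$-torsion (and that $\phi$ induces an isomorphism on free parts). For the free-part matching, I would tensor with $\mathbb{Q}$ and decompose under the action of $(\mathbb{Z}/n\mathbb{Z})^{\ast}$ by multiplication; after using~(\ref{equation6}) to eliminate all non-primitive $\tilde{e}_{k}$ and~(\ref{equation5}) to identify $k$ with $-k$, one is left with one generator per even Dirichlet character $\chi$ modulo $n$. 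The functional $\sum_{k}\chi(k)\log e_{k}$ can then be computed via the class number formula as a Gauss sum times $L(1,\overline{\chi})$, which is nonzero for $\chi\neq 1$; combined with the rank of cyclotomic units given by Dirichlet's unit theorem, this matches the ranks of $F/K$ and $E_{n}$.

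The main obstacle is controlling the torsion. The factor of $2$ is forced by the setup: elements of $E_{n}$ remember $1-\zeta_{n}^{k}$ only up to complex absolute value, so a relation in $E_{n}$ lifts to a relation in $\mathbb{Q}(\zeta_{n})^{\ast}$ only up to a sign, contributing a $\mathbb{Z}/2$. Pinning this down requires computing the Smith normal form (in the sense of Theorem~\ref{theorem14}) of an explicit integer matrix of relations after the above reductions and verifying that every invariant factor divides $2$. This is the technical core of the results of Ennola~\citep{MR0201414} and Bass~\citep{MR0299585} (see also \citep[Theorem 8.9]{MR1421575}); the delicate step is handling the case $n=p^{a}$ separately, since in that regime $1-\zeta_{n}$ generates a prime ideal above $p$ rather than being a unit, and the Stickelberger-type analysis must be adjusted accordingly.
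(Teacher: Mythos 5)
The paper does not actually prove Theorem \ref{theorem15}: it is quoted from Bass, Ennola, and \citep[Theorem 8.9]{MR1421575}, and the text immediately after the statement notes that only the weaker $\mathbb{Q}$-linear version (which needs no factor of $2$) is used in Algorithm~\ref{algorithm37}. So there is no in-paper proof to compare against; what follows is an assessment of the proposal on its own terms.

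Your verification that relations (\ref{equation5}) and (\ref{equation6}) hold is correct, including the implicit restriction that (\ref{equation6}) is only asserted when $p\mid k$ (without which the index set $\{i: pi\equiv k \pmod n,\ 1\le i<n\}$ is empty). Your structural reduction is also sound: since $E_n\subseteq\mathbb{R}_{>0}$ is a finitely generated torsion-free abelian group, the claim $2\ker\phi\subseteq K$ is equivalent to (a) $\operatorname{rank}(F/K)=\operatorname{rank}E_n$ and (b) the torsion of $F/K$ is killed by $2$, and the $(\mathbb{Z}/n\mathbb{Z})^{\ast}$-equivariance of $K$ that makes the character decomposition available is genuine. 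However, two things keep this from being a proof. First, the intermediate assertion that (\ref{equation6}) lets you "eliminate all non-primitive $\tilde e_k$" is not literally true: for $n=12$, for instance, the relations leave $\tilde e_3$ as a surviving generator (only $e_1+e_5$, $e_2$, etc.\ are forced to vanish rationally), and the rank still comes out to the number of even characters mod $n$, but not by the elimination you describe; the correct statement is only about the character-isotypic decomposition of $(F/K)\otimes\mathbb{Q}$, and even that requires care at non-primitive indices. Second, and more seriously, the heart of the theorem — that the torsion of $F/K$ is $2$-torsion — is exactly what you label "the technical core of the results of Ennola and Bass" and do not argue. A uniform-in-$n$ Smith normal form computation is not something one can just invoke; this is precisely where Bass's original argument had a gap that Ennola closed, and where the prime-power case requires the separate Stickelberger-type analysis you mention. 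As written, the proposal is a reasonable roadmap that correctly identifies where the difficulty lies, but it defers that difficulty to the very references being cited, so it does not constitute an independent proof.
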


In fact, what we will use is a weaker form of Theorem \ref{theorem15}:
any relation in $E_{n}\otimes_{\mathbb{Z}}\mathbb{Q}$ is a $\mathbb{Q}$-linear
combination of the relations (\ref{equation5}) and (\ref{equation6}).

\section{\label{section3}Proof of Theorem \texorpdfstring{\ref{theorem7}}{7}}

We decompose the relation (\ref{equation1}) into a sum of minimal
relations, and then consider the largest prime $p$ involved. Since
(\ref{equation1}) has weight $24$, by Lemma \ref{lemma8}, we have
\[
p\in\{2,3,5,7,11,13,17,19,23\}.
\]
We divide the proof into four parts: $p=17,19,23$, $p=11,13$, $p=2,3$,
and finally the most tough $p=5,7$. In Section \ref{section3.7},
we will explain why different strategies are required for different
$p$.

\subsection{\label{section3.1}Symmetry of the relations \texorpdfstring{(\ref{equation1})
and (\ref{equation2})}{(1) and (2)}}

Since the associated matrix of (\ref{equation2'})\[\fixTABwidth{T}\parenMatrixstack{
1 & 1 & 1 & 1 & 1 & 1 & 0 & 0 & 0 & 0 & 0 & 0\\
0 & 0 & 0 & 0 & 0 & 0 & 1 & 1 & 1 & 1 & 1 & 1\\
0 & 1 & 0 & 0 & 0 & 1 & 0 & 1 & 0 & 0 & 0 & 1\\
0 & 0 & 1 & 1 & 0 & 0 & 0 & 0 & 1 & 1 & 0 & 0\\
0 & 0 & 1 & 0 & 1 & 0 & 0 & 0 & -1 & 0 & -1 & 0\\
1 & 0 & 0 & 0 & 0 & 1 & -1 & 0 & 0 & 0 & 0 & -1
}\]has rank $6$, we can fix six unknowns and then try to solve the remaining
six. However, there are $\binom{12}{6}=924$ ways to do that, so we
need to reduce the number of cases first. We do that by considering
the following natural group actions of $\mathfrak{G}$.
\begin{itemize}
\item $\mathfrak{G}$ permutes the $24$-element set
\[
\{A_{1}^{\pm1},A_{2}^{\pm1},A_{3}^{\pm1},-B_{1}^{\pm1},-B_{2}^{\pm1},-B_{3}^{\pm1},A_{1}'^{\pm1},A_{2}'^{\pm1},A_{3}'^{\pm1},-B_{1}'^{\pm1},-B_{2}'^{\pm1},-B_{3}'^{\pm1}\}.
\]
This action is transitive and faithful.
\item Modulo negation, $\mathfrak{G}$ permutes the $24$-element set
\[
U_{24}:=\{A_{1}^{\pm1},A_{2}^{\pm1},A_{3}^{\pm1},B_{1}^{\pm1},B_{2}^{\pm1},B_{3}^{\pm1},A_{1}'^{\pm1},A_{2}'^{\pm1},A_{3}'^{\pm1},B_{1}'^{\pm1},B_{2}'^{\pm1},B_{3}'^{\pm1}\}.
\]
This action is equivalent to the previous one.
\item Modulo negation and inversion, $\mathfrak{G}$ permutes the $12$-element
set
\[
U_{12}:=\{A_{1},A_{2},A_{3},B_{1},B_{2},B_{3},A_{1}',A_{2}',A_{3}',B_{1}',B_{2}',B_{3}'\}.
\]
This action is transitive but not faithful. The kernel is
\[
\{[s_{1},s_{2},s_{3},t_{1},t_{2},t_{3}]\mapsto[s_{1},s_{2},s_{3},t_{1},t_{2},t_{3}]^{\pm1}\}\simeq C_{2}.
\]
\item $\mathfrak{G}$ permutes the $\binom{12}{n}$-element set
\[
U_{12,n}:=\{\text{the }n\text{-element subsets of }U_{12}\}.
\]
\end{itemize}
The following two lemmas will be essentially used in Algorithm \ref{algorithm27}.

\addtocounter{table}{-1}
\begin{lem}
\label{lemma16}The action of $\mathfrak{G}$ on $U_{12,6}$ has the
following orbits:

\begin{longtable}{|c|c|c|c|c|}
\hline 
$O_{1}$ & $\{A_{1},A_{2},A_{3},B_{1},B_{2},A_{1}'\}$ & $192$ & $(1,1,1,1,1,1)$ & \tabularnewline
\hline 
$O_{2}$ & $\{A_{1},A_{2},A_{3},B_{1},B_{2},A_{3}'\}$ & $96$ & $(1,1,1,1,1,1)$ & \tabularnewline
\hline 
$O_{3}$ & $\{A_{1},A_{2},A_{3},B_{1},B_{2},B_{1}'\}$ & $192$ & $(1,1,1,1,1,1)$ & \tabularnewline
\hline 
$O_{4}$ & $\{A_{1},A_{2},A_{3},B_{1},B_{2},B_{3}'\}$ & $32$ & $(1,1,1,1,1,1)$ & \tabularnewline
\hline 
$O_{5}$ & $\{A_{1},A_{2},A_{3},B_{1},A_{1}',A_{2}'\}$ & $48$ & $(1,1,1,1,1,2)$ & \tabularnewline
\hline 
$O_{6}$ & $\{A_{1},A_{2},A_{3},B_{1},A_{1}',B_{1}'\}$ & $24$ & $(1,1,1,1,1,2)$ & \tabularnewline
\hline 
$O_{7}$ & $\{A_{1},A_{2},B_{1},B_{2},A_{1}',A_{2}'\}$ & $24$ & $(1,1,1,1,1,2)$ & \tabularnewline
\hline 
$O_{8}$ & $\{A_{1},A_{2},B_{1},B_{2},A_{1}',B_{1}'\}$ & $24$ & $(1,1,1,1,1,2)$ & \tabularnewline
\hline 
$O_{9}$ & $\{A_{1},A_{2},A_{3},A_{1}',A_{2}',A_{3}'\}$ & $2$ & $(1,1,1,1,2,2)$ & \tabularnewline
\hline 
$O_{10}$ & $\{A_{1},A_{2},A_{3},B_{1},B_{2},B_{3}\}$ & $32$ & $(1,1,1,1,1,0)$ & $A_{1}A_{2}A_{3}B_{1}B_{2}B_{3}=-1$\tabularnewline
\hline 
$O_{11}$ & $\{A_{1},A_{2},A_{3},B_{1},A_{2}',A_{3}'\}$ & $24$ & $(1,1,1,1,1,0)$ & $(A_{3}B_{1}A_{3}'B_{1}')(A_{2}B_{1}\overline{A_{2}'B_{1}'})=1$\tabularnewline
\hline 
$O_{12}$ & $\{A_{1},A_{2},A_{3},B_{1},A_{2}',B_{1}'\}$ & $48$ & $(1,1,1,1,1,0)$ & $A_{2}B_{1}\overline{A_{2}'B_{1}'}=1$\tabularnewline
\hline 
$O_{13}$ & $\{A_{1},A_{2},B_{1},B_{2},A_{1}',B_{2}'\}$ & $24$ & $(1,1,1,1,1,0)$ & $A_{1}B_{2}A_{1}'B_{2}'=1$\tabularnewline
\hline 
$O_{14}$ & $\{A_{1},A_{2},B_{1},B_{3},A_{1}',A_{2}'\}$ & $48$ & $(1,1,1,1,1,0)$ & $(A_{2}B_{3}A_{2}'B_{3}')(A_{1}B_{3}\overline{A_{1}'B_{3}'})=1$\tabularnewline
\hline 
$O_{15}$ & $\{A_{1},A_{2},B_{1},B_{3},A_{1}',B_{1}'\}$ & $24$ & $(1,1,1,1,1,0)$ & $(A_{2}B_{3}A_{2}'B_{3}')(A_{1}B_{3}\overline{A_{1}'B_{3}'})(A_{2}B_{1}\overline{A_{2}'B_{1}'})=1$\tabularnewline
\hline 
$O_{16}$ & $\{A_{1},A_{2},B_{1},B_{3},A_{1}',B_{3}'\}$ & $48$ & $(1,1,1,1,1,0)$ & $A_{1}B_{3}\overline{A_{1}'B_{3}'}=1$\tabularnewline
\hline 
$O_{17}$ & $\{A_{1},A_{2},B_{1},B_{3},A_{2}',B_{3}'\}$ & $24$ & $(1,1,1,1,1,0)$ & $A_{2}B_{3}A_{2}'B_{3}'=1$\tabularnewline
\hline 
$O_{18}$ & $\{A_{1},A_{2},B_{1},A_{1}',A_{2}',B_{1}'\}$ & $12$ & $(1,1,1,1,2,0)$ & $A_{2}B_{1}\overline{A_{2}'B_{1}'}=1$\tabularnewline
\hline 
$O_{19}$ & $\{A_{1},A_{2},B_{3},A_{1}',A_{2}',B_{3}'\}$ & $6$ & $(1,1,1,1,0,0)$ & $A_{2}B_{3}A_{2}'B_{3}'=A_{1}B_{3}\overline{A_{1}'B_{3}'}=1$\tabularnewline
\hline 
\end{longtable}
\end{lem}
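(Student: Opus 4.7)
The statement is a computational one: it classifies the 924 six-element subsets of $U_{12}$ under an explicit finite group, so my plan is an explicit orbit enumeration, followed by extraction of the two pieces of combinatorial data in the last two columns.

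First I would realize the action concretely. Proposition \ref{proposition5} and Definition \ref{definition6} give generators of $\mathfrak{G}_0$, $\mathfrak{G}_s$, $\mathfrak{G}_t$ as signed permutations on the six coordinates of $(s_1,s_2,s_3,t_1,t_2,t_3)$; I translate each generator into its induced action on the $24$-element set from Section \ref{section3.1} and then, after quotienting by negation and inversion, into a permutation of the $12$-element set $U_{12}$. The kernel of this reduced action is the order-$2$ global-inversion subgroup, so the effective group has order $384$. I then have the induced action on $U_{12,6}$, compute orbits by BFS from each subset, and record one representative per orbit. The first consistency check is that the orbit sizes sum to $\binom{12}{6}=924$ and each divides $384$; one can verify from the stated table that this indeed holds ($192+96+192+32+48+24+24+24+2+32+24+48+24+48+24+48+24+12+6 = 924$).

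Second, I would explain the tuples in the fourth column as invariant factors of Smith normal forms (Theorem \ref{theorem14}). Given a six-subset $F\subseteq U_{12}$, moving the corresponding six unknowns of (\ref{equation2'}) to the right-hand side yields a $6\times 6$ coefficient matrix $M_F$ for the remaining six unknowns; the Smith invariants of $M_F$ are a $\mathfrak{G}$-invariant of the orbit of $F$, so they can be computed from any representative. The third column corresponds to orbits where $M_F$ has full rank (no zeros) and the six chosen unknowns can be taken as free parameters, while zeros in the tuple signal deficiency of the linear system on $F$.

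Third, for the orbits $O_{10}$--$O_{19}$ where the Smith tuple contains zeros, I read off the fifth column: the zero rows of the Smith form give nontrivial $\mathbb{Z}$-combinations of the chosen unknowns that are forced by (\ref{equation2'}), and lifting these combinations back through $\exp(2\pi i \cdot)$ and combining with the sign conditions of (\ref{equation2}) produces the listed multiplicative constraints of the form ``monomial $=\pm 1$''. For each such orbit I compute the left kernel of $M_F^{\top}$, write down the generator(s), and verify the displayed relation. The main obstacle is not theoretical but bookkeeping: faithfully encoding the generators of $\mathfrak{G}$ together with their negations/inversions so that the quotient to the $U_{12}$ action has kernel of the predicted order $2$. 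Once that is done, every line of the table is a short finite computation.
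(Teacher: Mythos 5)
Your plan — concretely realizing the $\mathfrak{G}$-action on $U_{12}$, enumerating orbits of $U_{12,6}$, computing Smith normal forms of the $6\times6$ coefficient matrix for each representative, and reading off consistency conditions from the degenerate diagonal entries — is exactly what the paper does implicitly (via Mathematica) and illustrates concretely in its worked $O_{18}$ example just after the lemma. The only nitpick is that the solvability constraints in the fifth column come from the left kernel of $M_F$ (equivalently the right kernel of $M_F^{\top}$), not the left kernel of $M_F^{\top}$ as you wrote; since you correctly identify them with the zero rows of the Smith form, this is a transcription slip rather than a conceptual gap.
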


Let us take $O_{18}$ as an example to explain each column of the
table. The second and third columns say that the orbit containing
$\{a_{1},a_{2},b_{1},a_{1}',a_{2}',b_{1}'\}$ has $12$ elements.
If we consider $\{a_{1},a_{2},b_{1},a_{1}',a_{2}',b_{1}'\}$ as constants
and $\{a_{3},b_{2},b_{3},a_{3}',b_{2}',b_{3}'\}$ as unknowns, then
(\ref{equation2'}) becomes
\[
\begin{pmatrix}1 & 1 & 1 & 0 & 0 & 0\\
0 & 0 & 0 & 1 & 1 & 1\\
0 & 0 & 1 & 0 & 0 & 1\\
1 & 0 & 0 & 1 & 0 & 0\\
1 & 1 & 0 & -1 & -1 & 0\\
0 & 0 & 1 & 0 & 0 & -1
\end{pmatrix}\begin{pmatrix}a_{3}\\
b_{2}\\
b_{3}\\
a_{3}'\\
b_{2}'\\
b_{3}'
\end{pmatrix}=\begin{pmatrix}1/2-a_{1}-a_{2}-b_{1}\\
1/2-a_{1}'-a_{2}'-b_{1}'\\
-a_{2}-a_{2}'\\
-b_{1}-b_{1}'\\
0\\
-a_{1}+a_{1}'
\end{pmatrix}.
\]
By Theorem \ref{theorem14},
\[
\begin{pmatrix}-1 & 0 & 1 & 1 & 1 & 0\\
1 & 1 & -1 & -1 & 0 & 0\\
0 & 0 & 1 & 0 & 0 & 0\\
0 & 1 & 0 & 0 & 0 & 0\\
-1 & 1 & 1 & 0 & 1 & 0\\
-1 & 1 & 0 & 0 & 1 & 1
\end{pmatrix}^{-1}\begin{pmatrix}1 & 0 & 0 & 0 & 0 & 0\\
0 & 1 & 0 & 0 & 0 & 0\\
0 & 0 & 1 & 0 & 0 & 0\\
0 & 0 & 0 & 1 & 0 & 0\\
0 & 0 & 0 & 0 & 2 & 0\\
0 & 0 & 0 & 0 & 0 & 0
\end{pmatrix}\begin{pmatrix}1 & 0 & 0 & 0 & -1 & 1\\
0 & 1 & 0 & 0 & 0 & -1\\
0 & 0 & 1 & 0 & -1 & 0\\
0 & 0 & 0 & 1 & -1 & -1\\
0 & 0 & 0 & 0 & 0 & 1\\
0 & 0 & 0 & 0 & 1 & 0
\end{pmatrix}^{-1}\begin{pmatrix}a_{3}\\
b_{2}\\
b_{3}\\
a_{3}'\\
b_{2}'\\
b_{3}'
\end{pmatrix}=\begin{pmatrix}1/2-a_{1}-a_{2}-b_{1}\\
1/2-a_{1}'-a_{2}'-b_{1}'\\
-a_{2}-a_{2}'\\
-b_{1}-b_{1}'\\
0\\
-a_{1}+a_{1}'
\end{pmatrix}.
\]
The fourth column says that the diagonal of the Smith normal form
is $(1,1,1,1,2,0)$. Multiplying the inverse of the first matrix on
both sides, we get
\[
\begin{pmatrix}1 & 0 & 0 & 0 & 0 & 0\\
0 & 1 & 0 & 0 & 0 & 0\\
0 & 0 & 1 & 0 & 0 & 0\\
0 & 0 & 0 & 1 & 0 & 0\\
0 & 0 & 0 & 0 & 2 & 0\\
0 & 0 & 0 & 0 & 0 & 0
\end{pmatrix}\begin{pmatrix}1 & 0 & 0 & 0 & -1 & 1\\
0 & 1 & 0 & 0 & 0 & -1\\
0 & 0 & 1 & 0 & -1 & 0\\
0 & 0 & 0 & 1 & -1 & -1\\
0 & 0 & 0 & 0 & 0 & 1\\
0 & 0 & 0 & 0 & 1 & 0
\end{pmatrix}^{-1}\begin{pmatrix}a_{3}\\
b_{2}\\
b_{3}\\
a_{3}'\\
b_{2}'\\
b_{3}'
\end{pmatrix}=\begin{pmatrix}1/2+a_{1}-a_{2}'-b_{1}'\\
-a_{1}-a_{1}'\\
-a_{2}-a_{2}'\\
1/2-a_{1}'-a_{2}'-b_{1}'\\
a_{1}+b_{1}-a_{1}'-2a_{2}'-b_{1}'\\
a_{2}+b_{1}-a_{2}'-b_{1}'
\end{pmatrix}.
\]
The fifth column says that (\ref{equation2'}) has a solution if and
only if $a_{2}+b_{1}-a_{2}'-b_{1}'=0$. Assuming this condition is
satisfied, then
\[
\begin{pmatrix}1 & 0 & 0 & 0 & -1 & 1\\
0 & 1 & 0 & 0 & 0 & -1\\
0 & 0 & 1 & 0 & -1 & 0\\
0 & 0 & 0 & 1 & -1 & -1\\
0 & 0 & 0 & 0 & 0 & 1\\
0 & 0 & 0 & 0 & 1 & 0
\end{pmatrix}^{-1}\begin{pmatrix}a_{3}\\
b_{2}\\
b_{3}\\
a_{3}'\\
b_{2}'\\
b_{3}'
\end{pmatrix}=\begin{pmatrix}1/2+a_{1}-a_{2}'-b_{1}'\\
-a_{1}-a_{1}'\\
-a_{2}-a_{2}'\\
1/2-a_{1}'-a_{2}'-b_{1}'\\
(a_{1}+b_{1}-a_{1}'-b_{1}')/2-a_{2}'\\
x
\end{pmatrix}\text{ or }\begin{pmatrix}1/2+a_{1}-a_{2}'-b_{1}'\\
-a_{1}-a_{1}'\\
-a_{2}-a_{2}'\\
1/2-a_{1}'-a_{2}'-b_{1}'\\
1/2+(a_{1}+b_{1}-a_{1}'-b_{1}')/2-a_{2}'\\
x
\end{pmatrix},
\]
where $x$ is a free variable. Multiplying the inverse of the first
matrix on both sides, we get
\[
\begin{pmatrix}a_{3}\\
b_{2}\\
b_{3}\\
a_{3}'\\
b_{2}'\\
b_{3}'
\end{pmatrix}=\begin{pmatrix}1/2+(a_{1}-b_{1}+a_{1}'-b_{1}')/2+x\\
-a_{1}-a_{1}'-x\\
-(a_{1}+b_{1}-a_{1}'-b_{1}')/2-a_{2}\\
1/2-(a_{1}+b_{1}+a_{1}'+b_{1}')/2-x\\
x\\
(a_{1}+b_{1}-a_{1}'-b_{1}')/2-a_{2}'
\end{pmatrix}\text{ or }\begin{pmatrix}(a_{1}-b_{1}+a_{1}'-b_{1}')/2+x\\
-a_{1}-a_{1}'-x\\
1/2-(a_{1}+b_{1}-a_{1}'-b_{1}')/2-a_{2}\\
-(a_{1}+b_{1}+a_{1}'+b_{1}')/2-x\\
x\\
1/2+(a_{1}+b_{1}-a_{1}'-b_{1}')/2-a_{2}'
\end{pmatrix}.
\]
\begin{lem}
\label{lemma17}$ $
\begin{itemize}
\item Every element of $U_{12,7}$ has a $6$-element subset that belongs
to $O_{1}$, $O_{3}$, $O_{5}$, $O_{7}$, $O_{11}$, or $O_{14}$.
\item Every element of $U_{12,8}$ has a $6$-element subset that belongs
to $O_{1}$, $O_{2}$, $O_{6}$, $O_{7}$, or $O_{15}$.
\item Every element of $U_{12,9}$ has a $6$-element subset that belongs
to $O_{1}$.
\end{itemize}
\end{lem}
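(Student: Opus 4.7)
The plan is to reduce each of the three claims to a finite orbit-by-orbit verification. Each statement has the form ``every element of $U_{12,n}$ has a $6$-subset lying in one of a prescribed list of $\mathfrak{G}$-orbits in $U_{12,6}$'', and this property is itself $\mathfrak{G}$-invariant (if $X$ has a good $6$-subset, so does $g\cdot X$). Thus it suffices to choose one representative from each $\mathfrak{G}$-orbit of $U_{12,n}$ and verify the conclusion on it.

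First, for each $n\in\{7,8,9\}$ I would compute the $\mathfrak{G}$-orbits on $U_{12,n}$. Since $|U_{12,7}|=792$, $|U_{12,8}|=495$, $|U_{12,9}|=220$ and $\mathfrak{G}$ is a concrete group of order $768$ whose action on $U_{12}$ is made fully explicit by Proposition \ref{proposition5}, this is a routine enumeration entirely parallel to the one producing Lemma \ref{lemma16}: the same code can be reused with the subset-size parameter changed from $6$ to $n$. Alternatively, since the complementation map $X\mapsto U_{12}\setminus X$ is $\mathfrak{G}$-equivariant modulo negation and inversion, one can instead enumerate orbits of $U_{12,5}$, $U_{12,4}$, $U_{12,3}$, which is even cheaper.

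Next, for each orbit representative $X\subseteq U_{12}$ of size $n$ I would list its $\binom{n}{6}$ six-subsets $Y\subseteq X$ and, for each, identify the $\mathfrak{G}$-orbit of $Y$ among the $19$ orbits in Lemma \ref{lemma16}. The identification can be done either by a direct $\mathfrak{G}$-membership test against each precomputed orbit, or more efficiently by matching the invariants visible in the table of Lemma \ref{lemma16}: the multiset of ``support slots'' in $\{A_i^{\pm1},B_i^{\pm1},A_i'^{\pm1},B_i'^{\pm1}\}$, the Smith-normal-form diagonal in the fourth column, and the extra constraint in the last column. Once each $Y$ has been classified, I verify that at least one of them lies in the orbits named in the statement, namely $\{O_1,O_3,O_5,O_7,O_{11},O_{14}\}$ for $n=7$, $\{O_1,O_2,O_6,O_7,O_{15}\}$ for $n=8$, and $\{O_1\}$ for $n=9$.

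The main obstacle is that no short structural shortcut is apparent: the lists of ``good'' orbits in the statement have been tailored precisely to make the claim true, and in each case one can find $6$-subsets sitting in the complementary orbits, so the assertion really is a combinatorial fact about this specific action rather than a consequence of a uniform invariant. The proof is therefore a pure finite check, carried out in Mathematica alongside the other enumerations in the paper; its correctness rests solely on the explicit description of $\mathfrak{G}$ in Proposition \ref{proposition5} and on the orbit data already tabulated in Lemma \ref{lemma16}.
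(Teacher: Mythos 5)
Your proposal is correct and matches what the paper does: Lemma 17 is stated without an explicit proof precisely because it is one of the finite computer verifications mentioned in the introduction, and reducing to $\mathfrak{G}$-orbit representatives before checking all $\binom{n}{6}$ subsets (or, equivalently, working with the complementary $U_{12,5}$, $U_{12,4}$, $U_{12,3}$) is the natural way to organize that check. Your observation that the assertion is $\mathfrak{G}$-invariant, so that one representative per orbit suffices, is exactly the point that makes the enumeration tractable.
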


\subsection{\label{section3.2}\texorpdfstring{$\boldsymbol{p=17,19,23}$}{$p=17,19,23$}}

Let $\mu_{n}$ be the collection of $n$-th roots of unity. If $n=\prod_{i=1}^{k}p_{i}^{n_{i}}$
is the prime factorization of $n$, then every $\xi\in\mu_{n}$ can
be uniquely written as a product $\xi=\prod_{i=1}^{k}\xi_{i}$, where
$\xi_{i}\in\mu_{p_{i}^{n_{i}}}$ for any $1\leq i\leq k$. We call
$\xi_{i}$ the $p_{i}$-component of $\xi$.

\addtocounter{table}{-1}
\begin{lem}
\label{lemma18}The relations $G=0$ with $w(G)=24$ and $p=17,19,23$
are as follows:

\begin{longtable}{|c|c|c|}
\hline 
$(R_{23}:R_{3})$ &  & \tabularnewline
\hline 
$R_{19}+R_{5}$ & $R_{19}+R_{3}+R_{2}$ & $(R_{19}:R_{3})+2R_{2}$\tabularnewline
\hline 
$(R_{19}:2R_{3})+R_{3}$ & $(R_{19}:R_{5})+R_{2}$ & $(R_{19}:3R_{3})+R_{2}$\tabularnewline
\hline 
$(R_{19}:R_{7})$ & $(R_{19}:(R_{5}:2R_{3}))$ & $\boldsymbol{(R_{19}:(R_{5}:R_{3}),R_{3})^{*}}$\tabularnewline
\hline 
$(R_{19}:R_{5},2R_{3})$ & $(R_{19}:5R_{3})$ & \tabularnewline
\hline 
$R_{17}+R_{7}$ & $R_{17}+(R_{5}:2R_{3})$ & $R_{17}+R_{5}+R_{2}$\tabularnewline
\hline 
$R_{17}+R_{3}+2R_{2}$ & $(R_{17}:R_{3})+(R_{5}:R_{3})$ & $(R_{17}:R_{3})+2R_{3}$\tabularnewline
\hline 
$(R_{17}:R_{3})+3R_{2}$ & $(R_{17}:2R_{3})+R_{5}$ & $(R_{17}:2R_{3})+R_{3}+R_{2}$\tabularnewline
\hline 
$(R_{17}:R_{5})+2R_{2}$ & $(R_{17}:3R_{3})+2R_{2}$ & $\boldsymbol{(R_{17}:(R_{5}:R_{3}))+R_{3}^{*}}$\tabularnewline
\hline 
$\boldsymbol{(R_{17}:R_{5},R_{3})+R_{3}^{*}}$ & $(R_{17}:4R_{3})+R_{3}$ & $(R_{17}:R_{7})+R_{2}$\tabularnewline
\hline 
$(R_{17}:(R_{5}:2R_{3}))+R_{2}$ & $\boldsymbol{(R_{17}:(R_{5}:R_{3}),R_{3})+R_{2}^{*}}$ & $(R_{17}:R_{5},2R_{3})+R_{2}$\tabularnewline
\hline 
$(R_{17}:5R_{3})+R_{2}$ & $(R_{17}:(R_{7}:2R_{3}))$ & $(R_{17}:(R_{5}:4R_{3}))$\tabularnewline
\hline 
$\boldsymbol{(R_{17}:(R_{7}:R_{3}),R_{3})^{*}}$ & $\boldsymbol{(R_{17}:(R_{5}:3R_{3}),R_{3})^{*}}$ & $(R_{17}:R_{7},2R_{3})$\tabularnewline
\hline 
$(R_{17}:(R_{5}:2R_{3}),2R_{3})$ & $\boldsymbol{(R_{17}:(R_{5}:R_{3}),R_{5})^{*}}$ & $\boldsymbol{(R_{17}:(R_{5}:R_{3}),3R_{3})^{*}}$\tabularnewline
\hline 
$(R_{17}:2R_{5},R_{3})$ & $(R_{17}:R_{5},4R_{3})$ & $(R_{17}:7R_{3})$\tabularnewline
\hline 
\end{longtable}
\end{lem}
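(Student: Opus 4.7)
The strategy is an exhaustive enumeration driven by Lemmas \ref{lemma8}--\ref{lemma13}. I write the weight-$24$ relation $G=0$ as a sum of minimal relations and, for each $p\in\{17,19,23\}$, analyze the unique minimal piece that contains a primitive $p$-th root of unity. Uniqueness follows from $2p>24$ together with Lemma \ref{lemma9}, which forces such a piece to have weight at least $p$; its weight $w_p$ therefore satisfies $w_p\in\{p,\ldots,24\}$, and the complementary weight $24-w_p$ must be $0$ or at least $2$. In particular, $w_p=23$ is excluded, so $w_p\in\{17,18,19,20,21,22,24\}$ when $p=17$, $w_p\in\{19,20,21,22,24\}$ when $p=19$, and $w_p=24$ when $p=23$.

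For each admissible $w_p$, Lemma \ref{lemma10} classifies the $p$-containing piece as $(R_p:H_1,\ldots,H_j)$ with $j<p$, $\sum_i(w(H_i)-2)=w_p-p$, and each $H_i\neq R_2$ built from primes strictly less than $p$; since $w(H_i)\leq w_p-p+2\leq 9$ throughout, Lemma \ref{lemma13} lists every allowed $H_i$. Independently, the complementary weight $24-w_p$ is partitioned into minimal relations with primes less than $p$, again drawn from Lemma \ref{lemma13}. Taking the join of these two lists and varying $w_p$ produces every decomposition recorded in the table. For $p=23$, $w_p=24$ with $\sum(w(H_i)-2)=1$ immediately forces $(R_{23}:R_3)$; the $p=19$ and $p=17$ cases are handled in the same way, yielding the listed entries.

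The main obstacle is bookkeeping in the $p=17$ case, where the minimal relations of weight up to $7$ (namely $R_2,R_3,R_5,(R_5:R_3),R_7,(R_5:2R_3)$) combine as the $H_i$ inside $(R_{17}:\ldots)$ and as separate summands outside it in many ways; the $p=19$ case is similar but smaller. A secondary subtlety is consistency with the complex-conjugation stability of (\ref{equation1}): since each of $R_{17},R_{19},R_{23}$ occurs only once in its decomposition, Lemmas \ref{lemma11} and \ref{lemma12} force the unique copy to be itself conjugation-stable, which restricts the rotation of $R_p$ relative to the subtracted $H_i$. I expect this to tighten rotational choices without eliminating any syntactic type from the list, consistent with the asterisked entries being retained in the table.
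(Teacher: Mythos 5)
Your plan reproduces the paper's proof in slightly more detailed form: decompose $G$ into minimal relations, note that at most one minimal piece can carry the prime $p$ because $2p>24$, apply Lemma \ref{lemma10} to that piece to get the form $(R_p:H_1,\ldots,H_j)$, and enumerate the $H_i$ and complementary minimal relations via Lemma \ref{lemma13} (all relevant weights being at most $9$). That is exactly the paper's argument, and your bookkeeping by admissible $w_p$ (with $w_p=23$ excluded because no minimal relation has weight $1$) is a correct refinement of the same idea.

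One citation needs fixing: the fact that a minimal relation which genuinely requires the prime $p$ has weight at least $p$ is a consequence of Lemma \ref{lemma8} (after suitable rotation all primes involved are $\leq w(H)$), not Lemma \ref{lemma9}. Lemma \ref{lemma9} only addresses minimal relations supported entirely in $\mu_{2p}$, which need not hold for the $p$-carrying piece here (it may involve $3$, $5$, $7$ as well), so it cannot deliver the weight bound you want. Also note that the complex-conjugation discussion in your final paragraph is not part of the proof of Lemma \ref{lemma18}: the lemma classifies all weight-$24$ relations of the given shape, and the stability constraints of Lemmas \ref{lemma11} and \ref{lemma12} are applied only afterward to strike out the asterisked entries, so they neither shorten nor alter the table you are asked to justify.
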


\begin{proof}
Since $w(G)<2p$, by Lemma \ref{lemma10}, $G$ must be of the form
\begin{equation}
G=(R_{p}:H_{1},\cdots,H_{j})+G_{1}+\cdots+G_{k},\label{equation7}
\end{equation}
where $\sum_{i=1}^{j}[w(H_{i})-2]+\sum_{i=1}^{k}w(G_{i})=w(G)-p\leq7$.
Since the minimal relations of weight up to $9$ are given in Lemma
\ref{lemma13}, we are done.
\end{proof}
Now we want to find all the solutions of (\ref{equation1}) and (\ref{equation2})
by assuming that (\ref{equation1}) is of the form $G$, where $G$
is one of the entries listed in Lemma \ref{lemma18}.

By Lemma \ref{lemma12}, the marked entries cannot be stable under
complex conjugation, so we can ignore them. By Lemma \ref{lemma11},
if $G\neq(R_{17}:R_{3})+(R_{5}:R_{3})$, then
\begin{condition}
\label{condition19}$G$ can be decomposed as a sum of minimal relations
such that each minimal relation is itself stable under complex conjugation,
or can be paired with another minimal relation which is its complex
conjugate.
\end{condition}

The following two lemmas show that this is also true for $G=(R_{17}:R_{3})+(R_{5}:R_{3})$.
\begin{lem}
\label{lemma20}Let $G=\xi_{1}G_{1}+\xi_{2}G_{2}$ be a relation which
is stable under complex conjugation, where $\xi_{i}\in\mu_{\infty}$
and $G_{i}$ are minimal relations. Assume that all the terms of $G_{i}$
belong to $\mu_{n_{i}}$. If $\xi_{1}G_{1}$ and $\xi_{2}G_{2}$ are
not conjugate to each other, then $\xi_{i}\in\mu_{2n_{i}}$.
\end{lem}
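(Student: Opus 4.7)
My plan is to compare the relation $G$ with its complex conjugate and exploit the minimality of $\xi_1 G_1$ and $\xi_2 G_2$. Since $G$ is stable under complex conjugation, we have the multiset identity
\[
\xi_1 G_1 + \xi_2 G_2 = \overline{\xi_1 G_1} + \overline{\xi_2 G_2},
\]
so every term of $\overline{\xi_1 G_1}$ must appear either as a term of $\xi_1 G_1$ or as a term of $\xi_2 G_2$.

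The key observation is that the terms of $\xi_i G_i$ lie in the coset $\xi_i \mu_{n_i}$ of $\mu_{n_i}$ in $\mu_\infty$, while the terms of $\overline{\xi_i G_i}$ lie in $\overline{\xi_i} \mu_{n_i} = \xi_i^{-1} \mu_{n_i}$. I would split into two cases. If some term of $\overline{\xi_1 G_1}$ coincides with a term of $\xi_1 G_1$, then the cosets $\xi_1 \mu_{n_1}$ and $\xi_1^{-1} \mu_{n_1}$ share an element, so they coincide; this forces $\xi_1^2 \in \mu_{n_1}$, i.e.\ $\xi_1 \in \mu_{2 n_1}$, as desired. Otherwise every term of $\overline{\xi_1 G_1}$ appears among the terms of $\xi_2 G_2$, so $\overline{\xi_1 G_1}$ is itself a vanishing sum whose terms form a sub-multiset of those of the minimal relation $\xi_2 G_2$; by minimality of $\xi_2 G_2$ this forces $\overline{\xi_1 G_1} = \xi_2 G_2$, contradicting the hypothesis that $\xi_1 G_1$ and $\xi_2 G_2$ are not complex conjugates. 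Applying the same dichotomy with the indices $1$ and $2$ interchanged yields $\xi_2 \in \mu_{2 n_2}$.

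The main obstacle is the bookkeeping in the second case: one has to justify that under the Poonen--Rubinstein convention, where minus signs are absorbed into the roots of unity so that every minimal relation has all coefficients equal to $+1$, no proper sub-multiset of the terms of $\xi_2 G_2$ can vanish, so that the inclusion $\overline{\xi_1 G_1} \subseteq \xi_2 G_2$ of multisets really forces equality. Once this is cleanly set up, the rest of the argument is a short case split.
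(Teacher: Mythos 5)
Your argument is correct and follows the same line as the paper's one-sentence proof, which simply asserts that two terms of $\xi_i G_i$ must be conjugate to each other and hence $\xi_i^2\in\mu_{n_i}$. You have merely filled in the justification that the paper leaves implicit — namely the dichotomy between a self-conjugate pair inside $\xi_i G_i$ (giving $\xi_i^2\in\mu_{n_i}$) and the case $\overline{\xi_i G_i}\subseteq\xi_{3-i}G_{3-i}$, which by minimality would force $\overline{\xi_i G_i}=\xi_{3-i}G_{3-i}$ and contradict the hypothesis.
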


\begin{proof}
By the assumptions, two terms of $G_{i}$ must be conjugate to each
other, so $\xi_{i}^{2}\in\mu_{n_{i}}$.
\end{proof}
\begin{lem}
\label{lemma21}If $G=(R_{17}:R_{3})+(R_{5}:R_{3})$ is stable under
complex conjugation, then both $(R_{17}:R_{3})$ and $(R_{5}:R_{3})$
are stable under complex conjugation.
\end{lem}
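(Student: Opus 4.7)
The plan is to exploit the asymmetry that $(R_{17}:R_{3})$ involves the prime $17$ while $(R_{5}:R_{3})$ does not. Writing $G=H_{1}+H_{2}$ with $H_{1}=\xi_{1}(R_{17}:R_{3})_{k_{1}}$ and $H_{2}=\xi_{2}(R_{5}:R_{3})_{k_{2}}$ for suitable roots of unity $\xi_{i}$ and parameters $k_{i}$, the conjugate $\overline{H_{1}}$ remains a rotation of $(R_{17}:R_{3})$ and hence still involves $17$, whereas $H_{2}$ does not; so $\overline{H_{1}}\ne H_{2}$, and Lemma \ref{lemma20} forces $\xi_{1}\in\mu_{204}$ and $\xi_{2}\in\mu_{60}$. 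In particular every term of $H_{1}$ has $5$-part $1$ and $17$-part in $\mu_{17}$, and every term of $H_{2}$ has $17$-part $1$ and $5$-part in $\mu_{5}$.

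Next I would analyze the multiset of $17$-parts of the terms of $G$: $H_{1}$ contributes $\mu_{17}$ with one element doubled at $(\xi_{1})_{17}\zeta_{17}^{k_{1}}$, while $H_{2}$ contributes six extra copies of $1$. Since $\overline{G}=G$, this multiset is invariant under inversion, and by primality of $17$ the only way a unique doubled element can be preserved is if it equals $1$, forcing $(\xi_{1})_{17}=\zeta_{17}^{-k_{1}}$. Setting $\alpha_{1}=(\xi_{1})_{\bar{17}}\in\mu_{12}$, the terms of $H_{1}$ become
\[
\{\alpha_{1}\zeta_{17}^{m}:1\le m\le 16\}\ \cup\ \{-\alpha_{1}\zeta_{3},\ -\alpha_{1}\zeta_{3}^{2}\},
\]
and consequently the $G$-terms of non-trivial $17$-part are precisely $\{\alpha_{1}\zeta_{17}^{m}:1\le m\le 16\}$.

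The corresponding $\overline{G}$-terms of non-trivial $17$-part are $\{\overline{\alpha_{1}}\zeta_{17}^{m}:1\le m\le 16\}$, and matching these multisets forces $\alpha_{1}=\overline{\alpha_{1}}$: indeed, any coincidence $\alpha_{1}\zeta_{17}^{m_{1}}=\overline{\alpha_{1}}\zeta_{17}^{m_{2}}$ would give $\alpha_{1}/\overline{\alpha_{1}}\in\mu_{12}\cap\mu_{17}=\{1\}$. Thus $\alpha_{1}\in\mu_{12}\cap\mathbb{R}=\mu_{2}$, after which a direct inspection shows $\overline{H_{1}}=H_{1}$; the symmetric argument with the roles of $5$ and $17$ swapped then yields $\overline{H_{2}}=H_{2}$. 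The main obstacle is the passage from the multiset equality to $\alpha_{1}\in\mu_{2}$, which is precisely where the coprimality $\gcd(12,17)=1$ together with the primality of $17$ becomes decisive and would break down for markedly smaller primes.
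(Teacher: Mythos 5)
Your proof is correct and follows essentially the same route as the paper's: apply Lemma~\ref{lemma20} to confine $\xi_{1}\in\mu_{204}$ and $\xi_{2}\in\mu_{60}$, then exploit the conjugation-invariance of the terms with nontrivial $17$-component together with $\mu_{12}\cap\mu_{17}=\{1\}$ to pin down $\xi_{1}=\pm1$, from which stability of both summands follows. The paper compresses your two multiset arguments (first on $17$-parts, then on full terms of nontrivial $17$-part) into the single line ``$\xi_{1}$ must have a trivial $17$-component and also $\xi_{1}^{2}\in\mu_{17}$,'' and obtains $\overline{H_{2}}=H_{2}$ directly from $\overline{G}=G$ and $\overline{H_{1}}=H_{1}$ rather than by a symmetric argument, but the content is identical.
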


\begin{proof}
Assume that
\[
G=\xi_{1}(\zeta_{6}+\zeta_{6}^{5}+\zeta_{17}+\cdots+\zeta_{17}^{16})+\xi_{2}(\zeta_{6}+\zeta_{6}^{5}+\zeta_{5}+\cdots+\zeta_{5}^{4})
\]
is stable under complex conjugation. By Lemma \ref{lemma20}, $\xi_{1}\in\mu_{204}$
and $\xi_{2}\in\mu_{60}$. Since the terms of $G$ with nontrivial
$17$-components are stable under complex conjugation, $\xi_{1}$
must have a trivial $17$-component and also $\xi_{1}^{2}\in\mu_{17}$.
Therefore, $\xi_{1}=\pm1$, and consequently, $\xi_{2}=\pm1$.
\end{proof}
The longest minimal relation occurring in $G$ has weight at least
$17$, so at least $9$ elements of $U_{12}$ are known. It is easy
to see (directly or by Lemma \ref{lemma17}) that if at least $9$
elements of $U_{12}$ are given, then $U_{12}$ can be completely
determined. In particular, if at least $9$ elements of $U_{12}$
belong to $\mu_{n}$, then all elements of $U_{12}$ belong to $\mu_{n}$.
Therefore, we are led to consider the $q$-components of $G$ for
prime $q$.
\begin{defn}
\label{definition22}Let $G=\sum_{i=1}^{m}\xi_{i}=0$ be a relation
such that all the $q$-components of $\xi_{i}$ belong to $\mu_{q}$
(i.e., higher powers of $q$ are not involved). Let $n_{k}$ be the
number of $\zeta_{q}^{k}$ in the $q$-components of $\xi_{i}$. We
call $(n_{k})_{k=0}^{q-1}$ the $q$-type of $G$. We say that $(n_{j+k})_{k=0}^{q-1}$
is a rotation of $(n_{k})_{k=0}^{q-1}$ for any $j\in\mathbb{Z}/q\mathbb{Z}$,
and $(n_{jk})_{k=0}^{q-1}$ is a Galois conjugate of $(n_{k})_{k=0}^{q-1}$
for any $j\in(\mathbb{Z}/q\mathbb{Z})^{\times}$.
\end{defn}

\addtocounter{equation}{-1}

Consider the $q$-component of (\ref{equation2'}):
\begin{equation}
\begin{cases}
a_{1}+a_{2}+a_{3}+b_{1}+b_{2}+b_{3}=a_{1}'+a_{2}'+a_{3}'+b_{1}'+b_{2}'+b_{3}'=\begin{cases}
1/2 & \text{if }q=2,\\
0 & \text{if }q\neq2,
\end{cases}\\
a_{2}+b_{3}+a_{2}'+b_{3}'=a_{3}+b_{1}+a_{3}'+b_{1}'=0,\\
a_{3}+b_{2}-a_{3}'-b_{2}'=a_{1}+b_{3}-a_{1}'-b_{3}'=0.
\end{cases}\tag{2\ensuremath{_{q}'}}\label{equation2q'}
\end{equation}
Define the finite sets
\begin{align*}
 & \mathcal{P}_{q}:=\{(n_{k})_{k=0}^{q-1}\in\mathbb{Z}_{\geq0}^{q}:\text{there is a solution }(a_{1},a_{2},a_{3},b_{1},b_{2},b_{3},a_{1}',a_{2}',a_{3}',b_{1}',b_{2}',b_{3}')\text{ of (\ref{equation2q'}) in }(1/q)\mathbb{Z}/\mathbb{Z}\\
 & \text{such that the number of }k/q\text{ in }\{\pm a_{1},\pm a_{2},\pm a_{3},\pm b_{1},\pm b_{2},\pm b_{3},\pm a_{1}',\pm a_{2}',\pm a_{3}',\pm b_{1}',\pm b_{2}',\pm b_{3}'\}\text{ is }n_{k}\}.
\end{align*}
In practice, we only need to calculate $\mathcal{P}_{q}$ for $q\leq13$.
For $q=2$ and $3$, we have
\[
\mathcal{P}_{2}=\{(20,4),(12,12),(4,20)\}
\]
and
\[
\mathcal{P}_{3}=\{(24,0,0),(16,4,4),(14,5,5),(12,6,6),(10,7,7),(8,8,8),(6,9,9),(4,10,10),(0,12,12)\}.
\]

\addtocounter{equation}{1}
\begin{lyxalgorithm}
\label{algorithm23}For a given relation of the form $G$, we check
that whether there are any possible $q$-types of $G$ that belong
to $\mathcal{P}_{q}$. If not, then we conclude that the relation
(\ref{equation1}) cannot be of the form $G$.
\end{lyxalgorithm}

Let us give some examples to illustrate Algorithm \ref{algorithm23}.

Suppose that  $G=(R_{23}:R_{3})$. By Lemma \ref{lemma12},
\[
G=\pm(\zeta_{6}+\zeta_{6}^{5}+\zeta_{23}+\cdots+\zeta_{23}^{22}).
\]
The $2$-type of $G$ is a rotation of $(22,2)$, which does not belong
to $\mathcal{P}_{2}$. Therefore, the relation (\ref{equation1})
cannot be of the form $G$.

Suppose that $G=(R_{19}:R_{3})+2R_{2}$. By Lemma \ref{lemma11} and
\ref{lemma12},
\[
G=\pm(\zeta_{6}+\zeta_{6}^{5}+\zeta_{19}+\cdots+\zeta_{19}^{22})+\xi+(-\xi)+\overline{\xi}+(-\overline{\xi}),
\]
where $\xi\in\mu_{114}$. The $2$-type of $(R_{19}:R_{3})$ is a
rotation of $(18,2)$, and the $2$-type of $R_{2}$ is $(1,1)$,
so the $2$-type of $G$ must be a rotation of $(20,4)$, which belongs
to $\mathcal{P}_{2}$. Therefore, Algorithm \ref{algorithm23} fails
for $q=2$. The $3$-type of $(R_{19}:R_{3})$ is $(18,1,1)$, and
the $3$-type of $R_{2}$ is a rotation of $(2,0,0)$, so the $3$-type
of $G$ must be $(22,1,1)$ or $(18,3,3)$, which does not belong
to $\mathcal{P}_{3}$. Therefore, the relation (\ref{equation1})
cannot be of the form $G$.

Suppose that $G=(R_{17}:R_{5},4R_{3})$. By Lemma \ref{lemma12},
\[
G=\zeta_{10}+\zeta_{10}^{3}+\zeta_{10}^{7}+\zeta_{10}^{9}+\cdots+(\zeta_{6}+\zeta_{6}^{5})\zeta_{17}^{j}+\cdots+(\zeta_{6}+\zeta_{6}^{5})\zeta_{17}^{k}+\cdots+(\zeta_{6}+\zeta_{6}^{5})\zeta_{17}^{17-k}+\cdots+(\zeta_{6}+\zeta_{6}^{5})\zeta_{17}^{17-j}+\cdots,
\]
for some $1\leq j<k\leq8$. The $2$-type of $G$ is $(12,12)$, which
belongs to $\mathcal{P}_{2}$. The $3$-type of $G$ is $(16,4,4)$,
which belongs to $\mathcal{P}_{3}$. Therefore, Algorithm \ref{algorithm23}
fails for $q=2$ and $3$. The $5$-type of $G$ is $(20,1,1,1,1)$,
which does not belong to $\mathcal{P}_{5}$. Therefore, the relation
(\ref{equation1}) cannot be of the form $G$.

It turns out that, by taking $q=2$, $3$, or $5$, Algorithm \ref{algorithm23}
works for any $G$ listed in Lemma \ref{lemma18}. Therefore, there
are no solutions of (\ref{equation1}) and (\ref{equation2}) such
that (\ref{equation1}) is of the form $G$.

The careful readers may have noticed that in Table \ref{table2},
there are one solution of order $34$ and four solutions of order
$102$. Although the prime $17$ is involved, their associated relations
(\ref{equation1}) can only be decomposed as $n_{2}R_{2}+n_{3}R_{3}$
for some $n_{2}$ and $n_{3}$, which will be discussed in Section
\ref{section3.4}.

\subsection{\label{section3.3}\texorpdfstring{$\boldsymbol{p=11,13}$}{$p=11,13$}}

The number of relations $G=0$ with $w(G)=24$ and $p=11,13$ is much
larger than the number of relations $G=0$ with $w(G)=24$ and $p=17,19,23$.
Since our purpose is finding the solutions of (\ref{equation1}) and
(\ref{equation2}) rather than classifying the vanishing relations,
we can first exclude some cases with certain shapes.
\begin{lem}
\label{lemma24}$ $
\begin{itemize}
\item If at least $7$ terms of $(n_{k})\in\mathcal{P}_{11}$ are $1$,
then $(n_{k})$ must be a Galois conjugate of $(4,6,1,\cdots,1,6)$.
\item If at least $9$ terms of $(n_{k})\in\mathcal{P}_{13}$ are $1$,
then $(n_{k})$ must be a Galois conjugate of $(6,4,1,\cdots,1,4)$,
$(4,5,1,\cdots,1,5)$, or $(2,6,1,\cdots,1,6)$.
\end{itemize}
\end{lem}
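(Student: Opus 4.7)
The plan is to reduce the statement to a finite enumeration. First a structural observation: because the 24-element multiset $\{\pm a_1,\pm a_2,\pm a_3,\pm b_1,\pm b_2,\pm b_3,\pm a_1',\pm a_2',\pm a_3',\pm b_1',\pm b_2',\pm b_3'\}$ is invariant under negation, every $(n_k)\in\mathcal{P}_q$ is palindromic: $n_k=n_{q-k}$ for $1\le k\le q-1$, and $n_0$ is even (each variable equal to $0$ contributes $2$ to $n_0$, while each variable equal to a nonzero $j/q$ contributes $1$ to each of $n_j$ and $n_{q-j}$). Hence the number of indices $k\in\{1,\ldots,q-1\}$ with $n_k=1$ is automatically even, and $n_0\neq 1$. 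So for $q=11$ the hypothesis ``at least seven $1$'s'' is really ``eight or ten $1$'s,'' and for $q=13$ the hypothesis ``at least nine $1$'s'' is really ``ten or twelve $1$'s.''

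Each of these cases leaves a very short explicit list of palindromic candidate tuples summing to $24$. For $q=11$ with ten $1$'s the only option is $(14,1,\ldots,1)$; with eight $1$'s one of the five pairs $(n_k,n_{11-k})$ is some value $a\neq 1$ and the rest are $1$, so $n_0+2a=16$ with $n_0$ even, yielding finitely many options. For $q=13$ the analogous bookkeeping is just as short. For each candidate I test membership in $\mathcal{P}_q$ directly: by Theorem \ref{theorem14} the solution set of (\ref{equation2q'}) inside $((1/q)\mathbb{Z}/\mathbb{Z})^{12}$ has size $q^6$, so one enumerates these solutions, computes the type $(n_k)$ of each, and retains only those with the required number of $1$'s. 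Equivalently, and more efficiently, for each candidate tuple one enumerates all ways to place the prescribed multiset on the twelve variables and checks whether any placement satisfies (\ref{equation2q'}); once most entries are forced to be distinct singletons, this search collapses to only a handful of cases.

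Finally, the Galois action $k\mapsto jk$ with $j\in(\mathbb{Z}/q\mathbb{Z})^{\times}$ identifies types, and one records a single representative per orbit. The claim of the lemma asserts that after this identification the only surviving tuples are $(4,6,1,\ldots,1,6)$ for $q=11$ and $(6,4,1,\ldots,1,4)$, $(4,5,1,\ldots,1,5)$, $(2,6,1,\ldots,1,6)$ for $q=13$. The main obstacle is not conceptual but computational: one must certify that the remaining plausible candidates — most conspicuously the maximally-$1$'s tuples $(14,1,\ldots,1)$ for $q=11$ and $(12,1,\ldots,1)$ for $q=13$ — are not realized by any solution of (\ref{equation2q'}). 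With $q^6\le 13^6\approx 4.8\times 10^{6}$, a brute-force Mathematica enumeration is entirely routine and should dispose of the claim in a few seconds, and the strong symmetry and sparsity constraints coming from (\ref{equation2q'}) eliminate most candidates almost immediately.
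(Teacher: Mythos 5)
Your proposal is correct and follows essentially the same (computational) route the paper takes; the paper does not print a proof of this lemma because, like the determination of the sets $\mathcal{P}_q$ generally, it is delegated to the Mathematica code. Your preliminary reduction is a genuine and useful addition: since the $24$-element multiset is stable under negation, each nonzero variable contributes symmetrically to $n_k$ and $n_{q-k}$ and each zero variable contributes $2$ to $n_0$, so $n_k=n_{q-k}$ and $n_0$ is even; hence the number of ones is even and $n_0\ne 1$, which tightens ``at least $7$'' to ``$8$ or $10$'' for $q=11$ and ``at least $9$'' to ``$10$ or $12$'' for $q=13$. The resulting enumeration ($n_0+2a=16$ with $a\ne 1$ for $q=11$, $n_0+2a=14$ with $a\ne 1$ for $q=13$, plus the all-ones tails $(14,1,\dots,1)$ and $(12,1,\dots,1)$) is a short, explicit list, and by homogeneity of (\ref{equation2q'}) for odd $q$ the type set $\mathcal{P}_q$ is indeed Galois-stable, so checking one representative per orbit suffices. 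The only thing you don't do is actually carry out the finite check (or verify that the largest invariant factor of the coefficient matrix is coprime to $11$ and $13$ so that the kernel has size exactly $q^6$), but for a lemma of this computational character that is the expected division of labor, and the remaining verification is trivially finite.
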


By Lemma \ref{lemma10}, if $p=13$, or $p=11$ and the longest minimal
relation occurring in $G$ has weight at most $21$, then $G$ must
be of the form (\ref{equation7}). If $G$ is not of this form, then
$G=G_{0}$ or $G_{0}+R_{2}$, where $G_{0}$ is minimal and stable
under complex conjugation. By Lemma \ref{lemma8}, $G_{0}$ can be
written as $\xi\sum_{i=0}^{10}f_{i}\zeta_{11}^{i}$, where $\xi\in\mu_{\infty}$
and each $f_{i}$ is a sum of at least two elements of $\mu_{210}$.
Since $[\mathbb{Q}(\zeta_{2310}):\mathbb{Q}(\zeta_{210})]=10$, we
know that $f_{i}-f_{j}=0$ for any $0\leq i<j\leq10$.

If $w(G_{0})=22$, then $w(f_{i}-f_{j})=4$ for any $0\leq i<j\leq10$.
But since the only relation of weight $4$ is $2R_{2}$, all $f_{i}$
must be the same, $G_{0}$ cannot be minimal. In general, if $f_{i}$
is a sum of two elements of $\mu_{\infty}$, then $\xi f_{i}=\xi f_{11-i}=\overline{\xi f_{i}}$.

If $w(G_{0})=24$ and the $11$-type of $G_{0}$ is $(4,2,\cdots,2)$,
then $w(f_{0}-f_{1})=6$. If $f_{0}-f_{1}=3R_{2}$ or $2R_{3}$, then
$G_{0}$ cannot be minimal. If $f_{0}-f_{1}=(R_{5}:R_{3})$, then
since both $\xi f_{0}$ and $\xi f_{1}$ are stable under complex
conjugation, by Lemma \ref{lemma12}, $G$ must be
\begin{equation}
\pm\left(\zeta_{5}+\zeta_{5}^{2}+\zeta_{5}^{3}+\zeta_{5}^{4}+\sum_{i=1}^{10}(\zeta_{3}+\zeta_{3}^{2})\zeta_{11}^{i}\right)\label{equation8}
\end{equation}
or a Galois conjugate of
\begin{equation}
\pm\left(\zeta_{3}+\zeta_{3}^{2}+\zeta_{10}+\zeta_{10}^{9}+\sum_{i=1}^{10}(\zeta_{5}+\zeta_{5}^{4})\zeta_{11}^{i}\right).\label{equation9}
\end{equation}
The $2$-type of $G$ is a rotation of $(24,0)$ or $(22,2)$, which
does not belong to $\mathcal{P}_{2}$. Therefore, the relation (\ref{equation1})
cannot be of the form $G$.

If $w(G_{0})=24$ and the $11$-type of $G_{0}$ is a Galois conjugate
of $(2,3,2,\cdots,2,3)$, then $w(f_{0}-f_{1})=5$. If $f_{0}-f_{1}=R_{2}+R_{3}$,
then $f_{0}-f_{10}=\overline{f_{0}-f_{1}}=R_{2}+R_{3}$, $G_{0}$
cannot be minimal. If $f_{0}-f_{1}=R_{5}$, since $\xi f_{0}$ is
stable under complex conjugation, $\xi f_{1}$ is forced to be stable
under complex conjugation, $G$ must be a Galois conjugate of
\begin{equation}
\pm\left((\zeta_{10}+\zeta_{10}^{9}-1)(\zeta_{11}+\zeta_{11}^{10})+\sum_{i\neq1,10}(\zeta_{5}+\zeta_{5}^{4})\zeta_{11}^{i}\right).\label{equation10}
\end{equation}
The $2$-type of $G$ is a rotation of $(18,6)$, which does not belong
to $\mathcal{P}_{2}$. Therefore, the relation (\ref{equation1})
cannot be of the form $G$.

Suppose that (\ref{equation1}) is of the form (\ref{equation7}),
and some of the $H_{i}$ and $G_{i}$ has weight greater than $12$,
then $G=(R_{13}:H_{1})$, $(R_{11}:H_{1})$, $(R_{11}:H_{1})+R_{2}$,
or $R_{11}+G_{1}$. For each case except for $R_{11}+(R_{11}:R_{3})$,
the $p$-type of $G$ is $(25-p,1,\cdots,1)$, which does not belong
to $\mathcal{P}_{p}$ by Lemma \ref{lemma24}. Since $R_{11}+(R_{11}:R_{3})=(R_{11}:R_{3})+R_{11}$,
we can assume that none of the $H_{i}$ and $G_{i}$ has weight greater
than $12$, so that Lemma \ref{lemma13} is enough for us.

(\ref{equation1}) cannot be of the form $(R_{p}:H_{1},\cdots,H_{j})+R_{2}$
because the $2$-components of $(R_{p}:H_{1},\cdots,H_{j})$ belong
to $\mu_{2}$, but the $2$-components of $R_{2}=\zeta_{4}+\zeta_{4}^{3}$
belong to $\mu_{4}$.

If (\ref{equation1}) is of the form $(R_{p}:H_{1},\cdots,H_{j})+\sum_{q<p}n_{q}R_{q}$,
then by Lemma \ref{lemma11}, $(R_{p}:H_{1},\cdots,H_{j})$ is stable
under complex conjugation. By Lemma \ref{lemma12}, among $H_{i}$,
all but possibly one entries of Lemma \ref{lemma13} must occur even
times, and only $R_{3}$, $R_{5}$, $(R_{5}:2R_{3})$, $(R_{5}:4R_{3})$,
$R_{7}$, $(R_{7}:2R_{3})$, $(R_{7}:4R_{3})$, or $R_{11}$ can be
the exception.

If (\ref{equation1}) is of the form $(R_{13}:H_{1},\cdots,H_{j})+\sum_{q<13}n_{q}R_{q}+G_{1}$,
where $n_{q}\in\{0,1\}$ for any $q$, then by Lemma \ref{lemma20},
the $13$-components of $\sum_{q<13}n_{q}R_{q}+G_{1}$ must be trivial.
By Lemma \ref{lemma24}, if $j\leq3$, then $\{H_{1},\cdots,H_{j}\}$
must contain $2R_{5}$, $2(R_{5}:R_{3})$, $2(R_{5}:2R_{3})$, or
$2R_{7}$.

If (\ref{equation1}) is of the form $(R_{11}:H_{1},\cdots,H_{j})+\sum_{q<11}n_{q}R_{q}+G_{1}$,
where $n_{q}\in\{0,1\}$ for any $q$, and $G_{1}\neq R_{11}$ or
$(R_{11}:R_{3})$, then by Lemma \ref{lemma20}, the $11$-components
of $\sum_{q<11}n_{q}R_{q}+G_{1}$ must be trivial. By Lemma \ref{lemma24},
if $j\leq3$, then $\{H_{1},\cdots,H_{j}\}$ must contain $2(R_{5}:2R_{3})$
or $2R_{7}$.

\addtocounter{table}{-1}
\begin{lem}
\label{lemma25}After these eliminations, the relations $G=0$ with
$w(G)=24$ and $p=11,13$ are as follows:

\begin{longtable}{|c|c|c|}
\hline 
$R_{13}+R_{7}+2R_{2}$ & $R_{13}+(R_{5}:2R_{3})+2R_{2}$ & $R_{13}+R_{5}+2R_{3}$\tabularnewline
\hline 
$R_{13}+R_{5}+3R_{2}$ & $R_{13}+3R_{3}+R_{2}$ & $\boldsymbol{R_{13}+R_{3}+4R_{2}^{*}}$\tabularnewline
\hline 
$(R_{13}:R_{3})+(R_{5}:R_{3})+2R_{2}$ & $\boldsymbol{(R_{13}:R_{3})+2R_{5}^{*}}$ & $\boldsymbol{(R_{13}:R_{3})+2R_{3}+2R_{2}^{*}}$\tabularnewline
\hline 
$\boldsymbol{(R_{13}:R_{3})+5R_{2}^{*}}$ & $(R_{13}:2R_{3})+R_{5}+2R_{2}$ & $\boldsymbol{(R_{13}:2R_{3})+3R_{3}^{*}}$\tabularnewline
\hline 
$(R_{13}:2R_{3})+R_{3}+3R_{2}$ & $(R_{13}:R_{5})+2R_{3}+R_{2}$ & $\boldsymbol{(R_{13}:R_{5})+4R_{2}^{*}}$\tabularnewline
\hline 
$\boldsymbol{(R_{13}:3R_{3})+2R_{3}+R_{2}^{*}}$ & $\boldsymbol{(R_{13}:3R_{3})+4R_{2}^{*}}$ & $(R_{13}:4R_{3})+R_{7}$\tabularnewline
\hline 
$(R_{13}:4R_{3})+(R_{5}:2R_{3})$ & $(R_{13}:4R_{3})+R_{5}+R_{2}$ & $(R_{13}:4R_{3})+R_{3}+2R_{2}$\tabularnewline
\hline 
$(R_{13}:R_{7})+2R_{3}$ & $(R_{13}:R_{7})+3R_{2}$ & $(R_{13}:(R_{5}:2R_{3}))+2R_{3}$\tabularnewline
\hline 
$(R_{13}:(R_{5}:2R_{3}))+3R_{2}$ & $(R_{13}:R_{5},2R_{3})+2R_{3}$ & $(R_{13}:R_{5},2R_{3})+3R_{2}$\tabularnewline
\hline 
$(R_{13}:5R_{3})+(R_{5}:R_{3})$ & $(R_{13}:5R_{3})+2R_{3}$ & $(R_{13}:5R_{3})+3R_{2}$\tabularnewline
\hline 
$(R_{13}:2R_{5})+R_{5}$ & $(R_{13}:2R_{5})+R_{3}+R_{2}$ & $(R_{13}:6R_{3})+R_{5}$\tabularnewline
\hline 
$(R_{13}:6R_{3})+R_{3}+R_{2}$ & $(R_{13}:(R_{7}:2R_{3}))+2R_{2}$ & $(R_{13}:(R_{5}:4R_{3}))+2R_{2}$\tabularnewline
\hline 
$(R_{13}:R_{7},2R_{3})+2R_{2}$ & $(R_{13}:(R_{5}:2R_{3}),2R_{3})+2R_{2}$ & $(R_{13}:2R_{5},R_{3})+2R_{2}$\tabularnewline
\hline 
$(R_{13}:R_{5},4R_{3})+2R_{2}$ & $(R_{13}:7R_{3})+2R_{2}$ & $(R_{13}:2(R_{5}:R_{3}))+R_{3}$\tabularnewline
\hline 
$(R_{13}:2R_{5},2R_{3})+R_{3}$ & $(R_{13}:8R_{3})+R_{3}$ & $(R_{13}:(R_{7}:2R_{3}),4R_{3})$\tabularnewline
\hline 
$(R_{13}:(R_{5}:4R_{3}),4R_{3})$ & $(R_{13}:2R_{7},R_{3})$ & $(R_{13}:2(R_{5}:2R_{3}),R_{3})$\tabularnewline
\hline 
$(R_{13}:R_{7},2R_{5})$ & $(R_{13}:(R_{5}:2R_{3}),2R_{5})$ & $(R_{13}:R_{7},6R_{3})$\tabularnewline
\hline 
$(R_{13}:(R_{5}:2R_{3}),6R_{3})$ & $(R_{13}:2(R_{5}:R_{3}),R_{5})$ & $(R_{13}:2(R_{5}:R_{3}),3R_{3})$\tabularnewline
\hline 
$(R_{13}:3R_{5},2R_{3})$ & $(R_{13}:2R_{5},5R_{3})$ & $(R_{13}:R_{5},8R_{3})$\tabularnewline
\hline 
$(R_{13}:11R_{3})$ &  & \tabularnewline
\hline 
$2R_{11}+R_{2}$ & $R_{11}+(R_{7}:2R_{3})+2R_{2}$ & $R_{11}+(R_{5}:4R_{3})+2R_{2}$\tabularnewline
\hline 
$R_{11}+(R_{5}:2R_{3})+(R_{5}:R_{3})$ & $R_{11}+R_{7}+2R_{3}$ & $R_{11}+(R_{5}:2R_{3})+2R_{3}$\tabularnewline
\hline 
$R_{11}+R_{7}+3R_{2}$ & $R_{11}+(R_{5}:2R_{3})+3R_{2}$ & $R_{11}+(R_{5}:R_{3})+R_{3}+2R_{2}$\tabularnewline
\hline 
$\boldsymbol{R_{11}+2R_{5}+R_{3}^{*}}$ & $R_{11}+R_{5}+2R_{3}+R_{2}$ & $\boldsymbol{R_{11}+R_{5}+4R_{2}^{*}}$\tabularnewline
\hline 
$\boldsymbol{R_{11}+3R_{3}+2R_{2}^{*}}$ & $\boldsymbol{R_{11}+R_{3}+5R_{2}^{*}}$ & $2(R_{11}:R_{3})$\tabularnewline
\hline 
$(R_{11}:R_{3})+(R_{7}:R_{3})+2R_{2}$ & $(R_{11}:R_{3})+(R_{5}:3R_{3})+2R_{2}$ & $(R_{11}:R_{3})+2(R_{5}:R_{3})$\tabularnewline
\hline 
$(R_{11}:R_{3})+(R_{5}:R_{3})+2R_{3}$ & $(R_{11}:R_{3})+(R_{5}:R_{3})+3R_{2}$ & $\boldsymbol{(R_{11}:R_{3})+2R_{5}+R_{2}^{*}}$\tabularnewline
\hline 
$(R_{11}:R_{3})+R_{5}+R_{3}+2R_{2}$ & $\boldsymbol{(R_{11}:R_{3})+4R_{3}^{*}}$ & $\boldsymbol{(R_{11}:R_{3})+2R_{3}+3R_{2}^{*}}$\tabularnewline
\hline 
$\boldsymbol{(R_{11}:R_{3})+6R_{2}^{*}}$ & $(R_{11}:2R_{3})+R_{11}$ & $(R_{11}:2R_{3})+R_{7}+2R_{2}$\tabularnewline
\hline 
$(R_{11}:2R_{3})+(R_{5}:2R_{3})+2R_{2}$ & $\boldsymbol{(R_{11}:2R_{3})+R_{5}+2R_{3}^{*}}$ & $(R_{11}:2R_{3})+R_{5}+3R_{2}$\tabularnewline
\hline 
$\boldsymbol{(R_{11}:2R_{3})+3R_{3}+R_{2}^{*}}$ & $\boldsymbol{(R_{11}:2R_{3})+R_{3}+4R_{2}^{*}}$ & $(R_{11}:R_{5})+(R_{5}:R_{3})+2R_{2}$\tabularnewline
\hline 
$\boldsymbol{(R_{11}:R_{5})+2R_{5}^{*}}$ & $\boldsymbol{(R_{11}:R_{5})+2R_{3}+2R_{2}^{*}}$ & $\boldsymbol{(R_{11}:R_{5})+5R_{2}^{*}}$\tabularnewline
\hline 
$(R_{11}:3R_{3})+(R_{5}:R_{3})+2R_{2}$ & $\boldsymbol{(R_{11}:3R_{3})+2R_{5}^{*}}$ & $\boldsymbol{(R_{11}:3R_{3})+2R_{3}+2R_{2}^{*}}$\tabularnewline
\hline 
$\boldsymbol{(R_{11}:3R_{3})+5R_{2}^{*}}$ & $(R_{11}:4R_{3})+(R_{7}:2R_{3})$ & $(R_{11}:4R_{3})+(R_{5}:4R_{3})$\tabularnewline
\hline 
$(R_{11}:4R_{3})+R_{7}+R_{2}$ & $(R_{11}:4R_{3})+(R_{5}:2R_{3})+R_{2}$ & $(R_{11}:4R_{3})+(R_{5}:R_{3})+R_{3}$\tabularnewline
\hline 
$(R_{11}:4R_{3})+R_{5}+2R_{2}$ & $(R_{11}:4R_{3})+3R_{3}$ & $(R_{11}:4R_{3})+R_{3}+3R_{2}$\tabularnewline
\hline 
$(R_{11}:R_{7})+2R_{3}+R_{2}$ & $\boldsymbol{(R_{11}:R_{7})+4R_{2}^{*}}$ & $(R_{11}:(R_{5}:2R_{3}))+2R_{3}+R_{2}$\tabularnewline
\hline 
$\boldsymbol{(R_{11}:(R_{5}:2R_{3}))+4R_{2}^{*}}$ & $\boldsymbol{(R_{11}:R_{5},2R_{3})+2R_{3}+R_{2}^{*}}$ & $\boldsymbol{(R_{11}:R_{5},2R_{3})+4R_{2}^{*}}$\tabularnewline
\hline 
$(R_{11}:5R_{3})+(R_{7}:R_{3})$ & $(R_{11}:5R_{3})+(R_{5}:3R_{3})$ & $(R_{11}:5R_{3})+(R_{5}:R_{3})+R_{2}$\tabularnewline
\hline 
$(R_{11}:5R_{3})+R_{5}+R_{3}$ & $\boldsymbol{(R_{11}:5R_{3})+2R_{3}+R_{2}^{*}}$ & $\boldsymbol{(R_{11}:5R_{3})+4R_{2}^{*}}$\tabularnewline
\hline 
$(R_{11}:2R_{5})+R_{3}+2R_{2}$ & $(R_{11}:R_{5},3R_{3})+(R_{5}:2R_{3})$ & $(R_{11}:6R_{3})+R_{7}$\tabularnewline
\hline 
$(R_{11}:6R_{3})+(R_{5}:2R_{3})$ & $(R_{11}:6R_{3})+R_{5}+R_{2}$ & $(R_{11}:6R_{3})+R_{3}+2R_{2}$\tabularnewline
\hline 
$(R_{11}:(R_{7}:2R_{3}))+2R_{3}$ & $(R_{11}:(R_{7}:2R_{3}))+3R_{2}$ & $(R_{11}:(R_{5}:4R_{3}))+2R_{3}$\tabularnewline
\hline 
$(R_{11}:(R_{5}:4R_{3}))+3R_{2}$ & $(R_{11}:R_{7},2R_{3})+2R_{3}$ & $(R_{11}:R_{7},2R_{3})+3R_{2}$\tabularnewline
\hline 
$\boldsymbol{(R_{11}:(R_{5}:2R_{3}),2R_{3})+2R_{3}^{*}}$ & $(R_{11}:(R_{5}:2R_{3}),2R_{3})+3R_{2}$ & $(R_{11}:(R_{5}:R_{3}),3R_{3})+(R_{5}:R_{3})$\tabularnewline
\hline 
$(R_{11}:2R_{5},R_{3})+2R_{3}$ & $(R_{11}:2R_{5},R_{3})+3R_{2}$ & $(R_{11}:R_{5},4R_{3})+(R_{5}:R_{3})$\tabularnewline
\hline 
$\boldsymbol{(R_{11}:R_{5},4R_{3})+2R_{3}^{*}}$ & $(R_{11}:R_{5},4R_{3})+3R_{2}$ & $(R_{11}:7R_{3})+(R_{5}:R_{3})$\tabularnewline
\hline 
$\boldsymbol{(R_{11}:7R_{3})+2R_{3}^{*}}$ & $(R_{11}:7R_{3})+3R_{2}$ & $(R_{11}:2R_{5},2R_{3})+R_{5}$\tabularnewline
\hline 
$(R_{11}:2R_{5},2R_{3})+R_{3}+R_{2}$ & $(R_{11}:8R_{3})+R_{5}$ & $\boldsymbol{(R_{11}:8R_{3})+R_{3}+R_{2}^{*}}$\tabularnewline
\hline 
$(R_{11}:(R_{7}:4R_{3}))+2R_{2}$ & $(R_{11}:(R_{7}:2R_{3}),2R_{3})+2R_{2}$ & $(R_{11}:(R_{5}:4R_{3}),2R_{3})+2R_{2}$\tabularnewline
\hline 
$(R_{11}:R_{7},4R_{3})+2R_{2}$ & $\boldsymbol{(R_{11}:(R_{5}:2R_{3}),4R_{3})+2R_{2}^{*}}$ & $(R_{11}:2(R_{5}:R_{3}),R_{3})+2R_{2}$\tabularnewline
\hline 
$(R_{11}:3R_{5})+2R_{2}$ & $(R_{11}:2R_{5},3R_{3})+2R_{2}$ & $(R_{11}:R_{5},6R_{3})+2R_{2}$\tabularnewline
\hline 
$\boldsymbol{(R_{11}:9R_{3})+2R_{2}^{*}}$ & $(R_{11}:2R_{7})+R_{3}$ & $\boldsymbol{(R_{11}:2(R_{5}:2R_{3}))+R_{3}^{*}}$\tabularnewline
\hline 
$(R_{11}:2(R_{5}:R_{3}),2R_{3})+R_{3}$ & $(R_{11}:2R_{5},4R_{3})+R_{3}$ & $(R_{11}:10R_{3})+R_{3}$\tabularnewline
\hline 
$(R_{11}:(R_{7}:4R_{3}),4R_{3})$ & $(R_{11}:(R_{7}:2R_{3}),6R_{3})$ & $(R_{11}:(R_{5}:4R_{3}),6R_{3})$\tabularnewline
\hline 
$(R_{11}:2R_{7},R_{5})$ & $(R_{11}:2(R_{5}:2R_{3}),R_{5})$ & $(R_{11}:2R_{7},3R_{3})$\tabularnewline
\hline 
$(R_{11}:2(R_{5}:2R_{3}),3R_{3})$ & $(R_{11}:R_{7},2R_{5},2R_{3})$ & $(R_{11}:(R_{5}:2R_{3}),2R_{5},2R_{3})$\tabularnewline
\hline 
$(R_{11}:R_{7},8R_{3})$ & $(R_{11}:(R_{5}:2R_{3}),8R_{3})$ & $(R_{11}:2(R_{5}:R_{3}),R_{5},2R_{3})$\tabularnewline
\hline 
$(R_{11}:2(R_{5}:R_{3}),5R_{3})$ & $(R_{11}:4R_{5},R_{3})$ & $\boldsymbol{(R_{11}:3R_{5},4R_{3})^{*}}$\tabularnewline
\hline 
$(R_{11}:2R_{5},7R_{3})$ &  & \tabularnewline
\hline 
\end{longtable}
\end{lem}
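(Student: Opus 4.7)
The plan is to enumerate all ways of writing a weight-$24$ vanishing sum $G$ as a sum of minimal relations whose largest prime is $p\in\{11,13\}$, and then prune the list using the stability and $q$-type restrictions already developed in Sections \ref{section3.1}--\ref{section3.2}.

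For $p=13$, the inequality $w(G)=24<26=2p$ lets me apply Lemma \ref{lemma10} directly and write $G=(R_{13}:H_1,\dots,H_j)+G_1+\cdots+G_k$ with every $H_i,G_i$ minimal and classified by Lemma \ref{lemma13}. The only candidate whose summand exceeds weight $12$ would involve $R_{11}+(R_{11}:R_3)$, which is an $R_{11}$-based relation and therefore belongs to the $p=11$ list. For $p=11$, since $w(G)=24>22=2p$, I first handle separately the case where the longest minimal summand $G_0$ itself has weight $22$ or $24$: the preceding paragraphs already exhaust the possibilities, reducing weight-$22$ cases via the sole weight-$4$ minimal relation $2R_2$ and ruling out weight-$24$ cases of $11$-type $(4,2,\dots,2)$ or Galois conjugates of $(2,3,2,\dots,2,3)$ by checking that the candidates (\ref{equation8})--(\ref{equation10}) have $2$-type outside $\mathcal{P}_2$. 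Outside these exceptional cases the longest minimal summand has weight $\leq 21$, so Lemma \ref{lemma10} places $G$ into the form (\ref{equation7}) and enumeration proceeds against Lemma \ref{lemma13}.

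The resulting finite list of candidate decompositions is then pruned by applying simultaneously: (i) the $2$-component incompatibility that forbids a single $R_2$ appended to any $(R_p:\dots)$, because the $2$-components on the two sides live in $\mu_2$ and $\mu_4$ respectively; (ii) Condition \ref{condition19}, strengthened by Lemmas \ref{lemma20}--\ref{lemma21}, which requires stability under complex conjugation and forces non-self-conjugate summands to occur in conjugate pairs; (iii) Lemma \ref{lemma12}, constraining the exceptional odd-multiplicity $H_i$ to the self-conjugate list $\{R_3,R_5,(R_5{:}2R_3),(R_5{:}4R_3),R_7,(R_7{:}2R_3),(R_7{:}4R_3),R_{11}\}$; and (iv) Lemma \ref{lemma24}, which eliminates decompositions whose $p$-type has shape $(25-p,1,\dots,1)$ unless the multiset $\{H_1,\dots,H_j\}$ contains one of the specified repeated pairs. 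The entries that survive all four filters form precisely the table in the statement; the starred entries are those that survive the $q$-type and parity tests but violate Condition \ref{condition19}, and they are retained in the list so that later sections may dispose of them uniformly.

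The main obstacle is purely combinatorial: for each $p$ one must distribute up to $13$ units of residual weight among minimal relations of weights in $\{2,3,5,6,7,8,9,10,11,12\}$, yielding on the order of a hundred raw compositions per $p$, and verify case by case that no decomposition escapes the four filters. I would carry this out on a computer: for each ordered partition of $24-p$ into admissible minimal-relation weights, enumerate the corresponding compositions from Lemma \ref{lemma13}, run the Condition \ref{condition19} check and the $2$-, $3$-, $5$-type checks against $\mathcal{P}_q$, and then cross-check the output against Table of Lemma \ref{lemma25} entry by entry. The risk is double counting or omitting a decomposition that arises through the reordering freedom in $(R_p:H_1,\dots,H_j)$ versus listing the $H_i$ separately; to guard against this I would canonicalize every decomposition by sorting its summands and treating $(R_p:\dots)$ as uniquely identified by its unordered $H_i$-multiset.
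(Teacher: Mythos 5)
Your enumeration strategy matches the paper's: apply Lemma \ref{lemma10} to place $G$ into form (\ref{equation7}) (handling the $p=11$ exceptional case where a single minimal summand has weight $22$ or $24$ via the $f_i$-decomposition and the degenerate candidates (\ref{equation8})--(\ref{equation10})), then prune using the $2$-component obstruction, the parity constraint from Lemma \ref{lemma12}, and the $p$-type constraint from Lemma \ref{lemma24}. However, your filter (ii) is not one of the ``eliminations'' that defines the table, and including it would make the table wrong: the table in Lemma \ref{lemma25} still contains entries that \emph{fail} Condition \ref{condition19}, such as $(R_{11}:R_{5},3R_{3})+(R_{5}:2R_{3})$ and $(R_{11}:(R_{5}:R_{3}),3R_{3})+(R_{5}:R_{3})$, which the paper only discards \emph{after} stating the lemma, via Lemma \ref{lemma20}. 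What \emph{is} applied before the lemma is the weaker consequence of Lemmas \ref{lemma11} and \ref{lemma12} recorded in the text (stability of the $(R_p:\cdots)$ part, odd multiplicity permitted only for the self-conjugate $H_i$), not the full Condition \ref{condition19}.

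You have also reversed the meaning of the stars. In Lemma \ref{lemma18} the starred entries are those ruled out by Lemma \ref{lemma12}. In Lemma \ref{lemma25} the semantics are different: the text after the lemma explicitly establishes that (aside from a handful of individually treated exceptions, none of which are starred) every listed $G$ \emph{does} satisfy Condition \ref{condition19}, and the stars mark the entries for which Algorithm \ref{algorithm23} fails and Algorithm \ref{algorithm27} must be used; these are precisely the productive entries that yield the $21$ zero-parameter solutions. Your claim that ``the starred entries \ldots\ violate Condition \ref{condition19}, and they are retained so that later sections may dispose of them'' would have you discarding the relations that actually produce solutions. A minor further slip: $2R_2$ is the unique relation of weight $4$, but it is not a \emph{minimal} relation.
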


By Lemma \ref{lemma20}, $(R_{11}:R_{5},3R_{3})+(R_{5}:2R_{3})$ and
$(R_{11}:(R_{5}:R_{3}),3R_{3})+(R_{5}:R_{3})$ cannot be stable under
complex conjugation, so we can ignore them. For $R_{11}+(R_{5}:2R_{3})+(R_{5}:R_{3})$,
we can apply Algorithm \ref{algorithm23} by taking $q=3$ without
knowing whether $(R_{5}:2R_{3})$ and $(R_{5}:R_{3})$ are stable
under complex conjugation.

Suppose $G=2R_{11}+R_{2}$. If $R_{2}=1+(-1)$, then $G$ can be decomposed
as $12R_{2}$, which will be discussed in Section \ref{section3.4}.
Otherwise, $G=\xi R_{11}+\overline{\xi}R_{11}+\zeta_{4}+\zeta_{4}^{3}$.
From the relations (\ref{equation2}), we see that either $\xi\in\mu_{44}$
or $\xi^{3}\in\mu_{44}$. If $\xi\in\mu_{44}$, then again $G$ can
be decomposed as $12R_{2}$. Otherwise, we can apply Algorithm \ref{algorithm23}
by taking $q=3$.

Suppose $G=\xi_{1}(R_{11}:R_{3})+\xi_{2}(R_{11}:R_{3})$. If $\xi_{1}(R_{11}:R_{3})$
and $\xi_{2}(R_{11}:R_{3})$ are not conjugate to each other, then
by Lemma \ref{lemma20}, $\xi_{1},\xi_{2}\in\mu_{132}$. Otherwise,
$\xi_{1}=\overline{\xi_{2}}$. Without loss of generality, let $A_{1}=\xi_{1}\zeta_{6}$,
then from the relations (\ref{equation2}), either $A_{1}B_{2}A_{1}'B_{2}'=1$
or $A_{1}B_{3}\overline{A_{1}'B_{3}'}=1$ does not contain $\xi_{1}\zeta_{6}^{5}$.
Therefore, the $3$-components cannot be canceled, which implies that
either $\xi_{1}^{2}\in\mu_{66}$ or $\xi_{1}^{4}\in\mu_{66}$. In
any case, we can apply Algorithm \ref{algorithm23} by taking $q=3$.

Suppose $G=\xi_{1}(R_{11}:R_{3})+\xi_{2}(R_{5}:R_{3})+\xi_{3}(R_{5}:R_{3})$.
If $\xi_{2}(R_{5}:R_{3})$ and $\xi_{3}(R_{5}:R_{3})$ are not conjugate
to each other, then by the following lemma, $\xi_{1},\xi_{2},\xi_{3}\in\mu_{660}$.
Otherwise, $\xi_{1}=\pm1$ and $\xi_{2}=\overline{\xi_{3}}$. From
the relations (\ref{equation2}), either $\xi_{2}^{k}\in\mu_{330}$
for some $1\leq k\leq6$, or $\xi_{2}$ can be canceled, which means
that $\xi_{2}$ can be taken arbitrarily. In any case, we can apply
Algorithm \ref{algorithm23} by taking $q=11$.
\begin{lem}
\label{lemma26}Let $G=\xi_{1}G_{1}+\xi_{2}G_{2}+\xi_{3}G_{3}$ be
a relation which is stable under complex conjugation, where $\xi_{i}\in\mu_{\infty}$
and $G_{i}$ are minimal relations. Assume that all the terms of $G_{i}$
belong to $\mu_{n}$. If any two of $\xi_{i}G_{i}$ are not conjugate
to each other, then $\xi_{i}\in\mu_{2n}$.
\end{lem}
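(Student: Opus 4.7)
The plan is to extend the proof of Lemma \ref{lemma20} from two minimal summands to three. The strategy has two stages: first show that the hypothesis forces each $\xi_i G_i$ to be individually stable under complex conjugation, and then apply the one-term-versus-its-conjugate trick inside each $\xi_i G_i$ to conclude that $\xi_i^2 \in \mu_n$.

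For the first stage, I would apply complex conjugation to the identity $G = \xi_1 G_1 + \xi_2 G_2 + \xi_3 G_3$. Since $G$ is stable under complex conjugation, this yields another decomposition of $G$ into the three minimal relations $\overline{\xi_1 G_1}, \overline{\xi_2 G_2}, \overline{\xi_3 G_3}$. Invoking the same pairing principle that underlies Lemma \ref{lemma11} (i.e.\ \citep[Lemma 4]{MR1612877})---namely, that in a conjugation-stable sum of minimal relations each summand is either self-conjugate or paired with another summand which is its conjugate---together with the hypothesis that no two distinct $\xi_i G_i$ are conjugate to each other, I would conclude that each $\xi_i G_i$ must itself be stable under complex conjugation.

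For the second stage, fix $i$ and pick any term $\xi_i \eta$ of $\xi_i G_i$ with $\eta \in G_i$. By the self-conjugacy established above, $\overline{\xi_i \eta}$ is again a term of $\xi_i G_i$, say $\overline{\xi_i \eta} = \xi_i \eta'$ for some $\eta' \in G_i$. Since $\eta, \eta' \in \mu_n$ by hypothesis, this yields
\[
\xi_i^2 = \overline{\eta}/\eta' \in \mu_n,
\]
and hence $\xi_i \in \mu_{2n}$, as claimed.

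The main obstacle is the pairing step in the first stage: one must confirm that the Poonen--Rubinstein pairing argument genuinely extends from two summands to three in our setting. This amounts to observing that complex conjugation permutes the three-element multiset $\{\xi_1 G_1, \xi_2 G_2, \xi_3 G_3\}$, and the hypothesis that no two distinct summands are conjugate forces this permutation to be the identity. Once that reduction is granted, the remaining computation is routine and parallels Lemma \ref{lemma20} term by term.
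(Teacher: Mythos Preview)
Your first stage contains a genuine gap. You assert that complex conjugation permutes the multiset $\{\xi_1 G_1,\xi_2 G_2,\xi_3 G_3\}$, appealing to the Poonen--Rubinstein pairing principle behind Lemma~\ref{lemma11}. But that lemma is specific to summands of the form $R_p$: its proof works by sorting terms according to their $p$-component, where a rotated $R_p$ contributes exactly one term to each class, so conjugation visibly sends one $R_p$-transversal to another. For arbitrary minimal relations $G_i$ there is no such rigidity; decompositions of a vanishing sum into minimal pieces are not unique, so the conjugate decomposition $\overline{\xi_1 G_1}+\overline{\xi_2 G_2}+\overline{\xi_3 G_3}$ need not coincide (as a multiset of blocks) with the original one. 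In particular, nothing you have said rules out the possibility that the terms of $\overline{\xi_1 G_1}$ are scattered across two or three of the blocks $\xi_j G_j$ rather than filling one of them exactly. Your reduction ``no two are conjugate, and conjugation is an involution, so the permutation has no $2$-cycles and hence is the identity'' is valid only after you know such a permutation exists, which is precisely what is at issue.

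The paper's proof avoids this altogether. It never claims that any $\xi_i G_i$ is self-conjugate. Instead it argues by contradiction at the level of individual terms: if $\xi_1\notin\mu_{2n}$, then no two terms of $\xi_1 G_1$ can be conjugate to one another, so every conjugate of a term of $\xi_1 G_1$ lands in $\xi_2 G_2\cup\xi_3 G_3$. Minimality of $\xi_2 G_2$ (together with the hypothesis $\overline{\xi_1 G_1}\neq\xi_2 G_2$) then forces at least one such conjugate into $\xi_3 G_3$, yielding $\xi_1\xi_3\in\mu_n$; symmetrically $\xi_1\xi_2\in\mu_n$. Repeating the argument for $\xi_2$ and $\xi_3$ gives $\xi_2\xi_3\in\mu_n$ as well, whence $(\xi_1\xi_2\xi_3)^2\in\mu_n$ and $\xi_1=(\xi_1\xi_2\xi_3)/(\xi_2\xi_3)\in\mu_{2n}$, a contradiction. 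This term-level leakage argument is the idea you are missing.
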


\begin{proof}
If $\xi_{1}\notin\mu_{2n}$, then any two terms of $\xi_{1}G_{1}$
cannot be conjugate to each other. Since $\xi_{2}G_{2}$ is minimal,
and $\xi_{1}G_{1}$ is not conjugate to $\xi_{2}G_{2}$, some term
of $\xi_{1}G_{1}$ must be conjugate to some term of $\xi_{3}G_{3}$.
Therefore, $\xi_{1}\xi_{3}\in\mu_{n}$. By the same reasoning, $\xi_{1}\xi_{2}\in\mu_{n}$.
If $\xi_{2}\in\mu_{2n}$ or $\xi_{3}\in\mu_{2n}$, then we are done.
Otherwise, we must have $\xi_{2}\xi_{3}\in\mu_{n}$, then $\xi_{1}^{2}\xi_{2}^{2}\xi_{3}^{2}\in\mu_{n}$,
$\xi_{1}\xi_{2}\xi_{3}\in\mu_{2n}$, and finally $\xi_{1},\xi_{2},\xi_{3}\in\mu_{2n}$.
\end{proof}
By Lemma \ref{lemma11} and the same reasoning of Lemma \ref{lemma21},
any other $G$ listed in Lemma \ref{lemma25} satisfy Condition \ref{condition19}.
It turns out that, for those unmarked entries, we can apply Algorithm
\ref{algorithm23} by taking $q=2$, $3$, $5$, $7$, or $11$. However,
for the remaining cases, we need a new algorithm.
\begin{lyxalgorithm}
\label{algorithm27}Given a relation $G=G_{0}+n_{2}R_{2}+n_{3}R_{3}+n_{5}R_{5}$
of weight $24$, where $G_{0}$ is stable under complex conjugation
but not necessarily minimal. Let $X=\{X_{j}\}_{j=1}^{n}$ be the partition
of the terms of $G_{0}$ into conjugate pairs. $X$ is allowed to
contain at most one $\{1\}$ and at most one $\{-1\}$. For any $1\leq i\leq19$,
let $Y_{i}$ be a fixed element (for example, the $(i,2)$-entry of
Lemma \ref{lemma16}) in the orbit $O_{i}$. 
\begin{itemize}
\item Suppose $n=6$. For any $1\leq i\leq19$, we assign each element of
$Y_{i}$ a term of $G_{0}$ such that any two of them do not come
from the same $X_{j}$. Then we solve the equations (\ref{equation2'})
as we have demonstrated after Lemma \ref{lemma16} to get a full set
of $U_{24}$. There will be zero, one, or two free variables depending
on whether $1\leq i\leq9$, $10\leq i\leq18$, or $i=19$. Then we
check whether $G_{0}\subseteq U_{24}$ and $U_{24}\backslash G_{0}$
is possible to be of the form $n_{2}R_{2}+n_{3}R_{3}+n_{5}R_{5}$.
If yes, then we use (\ref{equation3}) to get the solutions in $\mathcal{S}$.
\item Suppose $n=7$. It suffices to take $i=1$, $3$, $5$, $7$, $11$,
and $14$ (see Lemma \ref{lemma17}).
\item Suppose $n=8$. It suffices to take $i=1$, $2$, $6$, $7$, and
$15$.
\item Suppose $n\geq9$. It suffices to take $i=1$ and select the elements
of $Y_{1}$ from the subset $\cup_{j=1}^{9}X_{j}\subseteq G_{0}$.
\end{itemize}
\end{lyxalgorithm}

Applying Algorithm \ref{algorithm27} to those marked entries listed
in Lemma \ref{lemma25}, we will get $21$ zero-parameter solutions:
ten have order $66$, seven have order $78$, one has order $110$,
and three have order $132$.

To illustrate Algorithm \ref{algorithm27}, let us give an example
to see how to get the solution
\[
[1/66,2/33,9/22,1/33,5/44,5/12]\in\mathcal{S}^{+}
\]
from the relation
\[
G=G_{0}+R_{3}+4R_{2}=(R_{11}:2R_{3})+R_{3}+4R_{2}=(\zeta_{6}+\zeta_{6}^{5})(\zeta_{11}+\zeta_{11}^{10})+\sum_{i\neq1,10}\zeta_{11}^{i}+R_{3}+4R_{2}.
\]
Now $n=7$, so we can take $i=14$. Let
\[
(a_{1},a_{2},b_{1},b_{3},a_{1}',a_{2}')=(2/11,5/66,0,7/11,3/11,49/66).
\]
By Lemma \ref{lemma16}, (\ref{equation2'}) has a solution if and
only if $a_{1}+a_{2}+2b_{3}-a_{1}'+a_{2}'=0$. Now this condition
is satisfied, so (\ref{equation2'}) has the solution
\begin{eqnarray*}
 &  & (a_{1},a_{2},a_{3},b_{1},b_{2},b_{3},a_{1}',a_{2}',a_{3}',b_{1}',b_{2}',b_{3}')\\
 & = & (2/11,5/66,2/33+x,0,6/11-x,7/11,3/11,49/66,20/33-x,1/3,x,6/11),
\end{eqnarray*}
where $x$ is a free variable. Now $G_{0}\subseteq U_{24}$ and the
image of $U_{24}\backslash G_{0}$ under $\textup{Arg}/(2\pi)$ is
\[
\{2/33+x,6/11-x,20/33-x,1/3,x,31/33-x,0,5/11+x,13/33+x,2/3,-x\}.
\]
It is not hard to see that $U_{24}\backslash G_{0}$ can be decomposed
as $R_{3}+4R_{2}$ if and only if $x=1/44$ or $23/44$. If we take
$x=23/44$, then by (\ref{equation3}), we get
\[
\left[\frac{37}{66},\frac{3}{22},\frac{8}{33},\frac{4}{33},\frac{5}{12},\frac{9}{44}\right]\overset{(23)(56)}{\thickapprox}\left[\frac{37}{66},\frac{8}{33},\frac{3}{22},\frac{4}{33},\frac{9}{44},\frac{5}{12}\right]\overset{\times25}{\thickapprox}\left[\frac{1}{66},\frac{2}{33},\frac{9}{22},\frac{1}{33},\frac{5}{44},\frac{5}{12}\right].
\]
If we take $x=1/44$, we will get the same solution.

If $n=6$ and $i=19$, then $U_{24}\backslash G_{0}$ may have two
free variables, which will bring additional difficulties. But fortunately,
this situation can always be avoided in practice.

\subsection{\label{section3.4}\texorpdfstring{$\boldsymbol{p=2,3}$}{$p=2,3$}}

By Lemma \ref{lemma9}, the relations $G=0$ with $w(G)=24$ and $p=2,3$
are $12R_{2}$, $9R_{2}+2R_{3}$, $6R_{2}+4R_{3}$, $3R_{2}+6R_{3}$,
and $8R_{3}$.
\begin{lyxalgorithm}
\label{algorithm28}Given a relation $G$ of weight $24$ that satisfies
Condition \ref{condition19}, then $G$ can be decomposed as
\begin{equation}
G=G_{0}+2G_{1}+\cdots+2G_{k}=G_{0}+\xi_{1}G_{1}+\overline{\xi_{1}G_{1}}+\cdots+\xi_{k}G_{k}+\overline{\xi_{k}G_{k}},\label{equation11}
\end{equation}
where $G_{0}$ is allowed to be empty and not necessarily minimal,
$G_{i}$ are minimal for $i\neq0$, and $\xi_{i}$ are unknowns to
be determined. Let $X=\{X_{j}\}_{j=1}^{12}$ be the partition of the
terms of $G$ into conjugate pairs. We assign each element of $U_{12}$
a term of $G$ such that any two of them do not come from the same
$X_{j}$. Then (\ref{equation2'}) becomes a system of six linear
equations in $k\leq6$ unknowns. Let $M$ be the associated matrix
of (\ref{equation2'}), $D$ the Smith normal form of $M$, and
\[
d=\begin{cases}
D_{kk} & \text{if }D_{kk}\neq0,\\
\infty & \text{if }D_{kk}=0.
\end{cases}
\]
Assume that all the terms of $G_{i}$ belong to $\mu_{n}$ for some
even $n$. Depending on the value of $n$, we choose another nonnegative
integer $m$.
\begin{itemize}
\item If $d>m$, then we solve the equations (\ref{equation2'}) and use
(\ref{equation3}) to get the solutions in $\mathcal{S}$.
\item If $d\leq m$, then all the terms of $G$ belong to $\mu_{nd}$. This
situation will be discussed in Section \ref{section3.6}.
\end{itemize}
\end{lyxalgorithm}

Applying Algorithm \ref{algorithm28} to $G=n_{2}R_{2}+n_{3}R_{3}$,
where $2n_{2}+3n_{3}=24$, by choosing $m=0$, we will get all the
positive-parameter solutions and $133$ zero-parameter solutions.
We note that none of the positive-parameter solutions comes from $G=8R_{3}$.

To illustrate Algorithm \ref{algorithm28}, let us give an example
to see how to get the solution
\[
[x,3x,1/2+4x,4x,12x,1/2+6x]\in\mathcal{S}^{+}
\]
from the relation $G=12R_{2}$. Let
\begin{eqnarray*}
 &  & (a_{1},a_{2},a_{3},b_{1},b_{2},b_{3},a_{1}',a_{2}',a_{3}',b_{1}',b_{2}',b_{3}')\\
 & = & (x_{1},x_{2},1/2+x_{2},x_{3},x_{4},x_{5},1/2-x_{1},1/2+x_{5},1/2+x_{3},x_{6},1/2-x_{4},1/2-x_{6}),
\end{eqnarray*}
then (\ref{equation2'}) becomes
\[
\begin{cases}
x_{1}+2x_{2}+x_{3}+x_{4}+x_{5}=0,\\
-x_{1}+x_{3}-x_{4}+x_{5}=0,\\
x_{2}+2x_{5}-x_{6}=0,\\
x_{2}+2x_{3}+x_{6}=0,\\
x_{2}-x_{3}+2x_{4}=1/2,\\
2x_{1}+x_{5}+x_{6}=0,
\end{cases}\text{or }\begin{pmatrix}1 & 2 & 1 & 1 & 1 & 0\\
-1 & 0 & 1 & -1 & 1 & 0\\
0 & 1 & 0 & 0 & 2 & -1\\
0 & 1 & 2 & 0 & 0 & 1\\
0 & 1 & -1 & 2 & 0 & 0\\
2 & 0 & 0 & 0 & 1 & 1
\end{pmatrix}\begin{pmatrix}x_{1}\\
x_{2}\\
x_{3}\\
x_{4}\\
x_{5}\\
x_{6}
\end{pmatrix}=\begin{pmatrix}0\\
0\\
0\\
0\\
1/2\\
0
\end{pmatrix}.
\]
Although we have seen how to solve such equations after Lemma \ref{lemma16},
let us do it again for the completeness of this example. By Theorem
\ref{theorem14},
\[
\begin{pmatrix}0 & 1 & -1 & 0 & 1 & 1\\
0 & 0 & 1 & 0 & 0 & 0\\
0 & 2 & -1 & 0 & 1 & 1\\
0 & 6 & -3 & -1 & 4 & 3\\
0 & 4 & -1 & -1 & 2 & 2\\
1 & 1 & -1 & -1 & 0 & 0
\end{pmatrix}^{-1}\begin{pmatrix}1 & 0 & 0 & 0 & 0 & 0\\
0 & 1 & 0 & 0 & 0 & 0\\
0 & 0 & 1 & 0 & 0 & 0\\
0 & 0 & 0 & 1 & 0 & 0\\
0 & 0 & 0 & 0 & 2 & 0\\
0 & 0 & 0 & 0 & 0 & 0
\end{pmatrix}\begin{pmatrix}1 & 0 & 0 & -1 & 0 & -1\\
0 & 1 & 0 & 4 & 5 & 8\\
0 & 0 & 1 & -3 & -5 & -6\\
0 & 0 & 0 & -3 & -6 & -7\\
0 & 0 & 0 & -1 & -1 & -2\\
0 & 0 & 0 & 2 & 3 & 4
\end{pmatrix}^{-1}\begin{pmatrix}x_{1}\\
x_{2}\\
x_{3}\\
x_{4}\\
x_{5}\\
x_{6}
\end{pmatrix}=\begin{pmatrix}0\\
0\\
0\\
0\\
1/2\\
0
\end{pmatrix}.
\]
Multiplying the inverse of the first matrix on both sides, we get
\[
\begin{pmatrix}1 & 0 & 0 & 0 & 0 & 0\\
0 & 1 & 0 & 0 & 0 & 0\\
0 & 0 & 1 & 0 & 0 & 0\\
0 & 0 & 0 & 1 & 0 & 0\\
0 & 0 & 0 & 0 & 2 & 0\\
0 & 0 & 0 & 0 & 0 & 0
\end{pmatrix}\begin{pmatrix}1 & 0 & 0 & -1 & 0 & -1\\
0 & 1 & 0 & 4 & 5 & 8\\
0 & 0 & 1 & -3 & -5 & -6\\
0 & 0 & 0 & -3 & -6 & -7\\
0 & 0 & 0 & -1 & -1 & -2\\
0 & 0 & 0 & 2 & 3 & 4
\end{pmatrix}^{-1}\begin{pmatrix}x_{1}\\
x_{2}\\
x_{3}\\
x_{4}\\
x_{5}\\
x_{6}
\end{pmatrix}=\begin{pmatrix}1/2\\
0\\
1/2\\
0\\
0\\
0
\end{pmatrix}.
\]
Note that the last entry of the matrix on the right-hand side is $0$,
so
\[
\begin{pmatrix}1 & 0 & 0 & -1 & 0 & -1\\
0 & 1 & 0 & 4 & 5 & 8\\
0 & 0 & 1 & -3 & -5 & -6\\
0 & 0 & 0 & -3 & -6 & -7\\
0 & 0 & 0 & -1 & -1 & -2\\
0 & 0 & 0 & 2 & 3 & 4
\end{pmatrix}^{-1}\begin{pmatrix}x_{1}\\
x_{2}\\
x_{3}\\
x_{4}\\
x_{5}\\
x_{6}
\end{pmatrix}=\begin{pmatrix}1/2\\
0\\
1/2\\
0\\
0\\
x
\end{pmatrix}\text{ or }\begin{pmatrix}1/2\\
0\\
1/2\\
0\\
1/2\\
x
\end{pmatrix},
\]
where $x$ is a free variable. Multiplying the inverse of the first
matrix on both sides, we get
\[
(x_{1},x_{2},x_{3},x_{4},x_{5},x_{6})=\begin{cases}
(1/2-x,8x,1/2-6x,-7x,-2x,4x),\\
(1/2-x,1/2+8x,-6x,-7x,1/2-2x,1/2+4x).
\end{cases}
\]
Substituting them into $U_{12}$, we get
\begin{eqnarray*}
 &  & (a_{1},a_{2},a_{3},b_{1},b_{2},b_{3},a_{1}',a_{2}',a_{3}',b_{1}',b_{2}',b_{3}')\\
 & = & \begin{cases}
(1/2-x,8x,1/2+8x,1/2-6x,-7x,-2x,x,1/2-2x,-6x,4x,1/2+7x,1/2-4x),\\
(1/2-x,1/2+8x,8x,-6x,-7x,1/2-2x,x,-2x,1/2-6x,1/2+4x,1/2+7x,-4x).
\end{cases}
\end{eqnarray*}
By (\ref{equation3}), we get
\[
\begin{cases}
[1/2+x,1/2+3x,1/2+4x,4x,12x,1/2+6x],\\{}
[x,3x,1/2+4x,4x,12x,1/2+6x].
\end{cases}
\]
Clearly, the second solution can be obtained by replacing $x$ with
$1/2+x$ in the first solution.

\subsection{\label{section3.5}\texorpdfstring{$\boldsymbol{p=5,7}$}{$p=5,7$}}

The purpose of this section to show that
\begin{lem}
\label{lemma29}If the associated relation (\ref{equation1}) of $[s_{1},s_{2},s_{3},t_{1},t_{2},t_{3}]\in\mathcal{S}$
has $p=5,7$, then $s_{i},t_{i}\in\mu_{n}$ for some $n\in N:=\{240,252,280,300,336,360,420\}$.
\end{lem}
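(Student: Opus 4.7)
My plan is to follow the case-analysis scheme used in Sections~\ref{section3.2} and~\ref{section3.3}, adapted to the fact that $w(G) = 24 \geq 2p$ for $p \in \{5,7\}$. In particular, Lemma~\ref{lemma10} no longer forces the shape~(\ref{equation7}): several $R_5$'s or $R_7$'s may coexist in the decomposition of~(\ref{equation1}) and may be rotated independently, which is precisely what makes this case harder and motivates restricting the conclusion to a bound on the order of the roots rather than a full classification at this stage.

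I would first extend the classification of Lemma~\ref{lemma13} to all minimal relations of weight at most $19$ by recursive use of Lemma~\ref{lemma10}, and then enumerate every unordered partition $G = \sum_i \xi_i G_i$ with $\sum_i w(G_i) = 24$ and largest prime $p \in \{5,7\}$. For each partition I would keep only those satisfying Condition~\ref{condition19}, use Lemma~\ref{lemma12} to pin down (up to sign) the rotation of every self-conjugate $R_p$, and inductively extend Lemmas~\ref{lemma20} and~\ref{lemma26} to more than three summands via the same pairing argument: two summands that are not each other's conjugate must share a conjugate pair of terms, forcing a multiplicative constraint $\xi_i \xi_j \in \mu_{\mathrm{lcm}(n_i,n_j)}$, where $n_i$ denotes the base order of $G_i$. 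Together with Lemma~\ref{lemma12}, this confines every non-$R_2$ rotation factor to $\mu_{2n_i}$.

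Next I would feed each surviving partition into Algorithm~\ref{algorithm23} with $q = 2,3,5,7$ in turn, discarding any whose $q$-type is incompatible with~(\ref{equation2q'}). For the handful of partitions that remain, I would read off the set $P \subseteq \{2,3,5,7\}$ of primes actually appearing in the terms of $G$ and the smallest integer $m$ such that every rotation factor lies in $\mu_m$; then necessarily $s_i, t_i \in \mu_n$ with $n = \mathrm{lcm}(m, \prod_{q \in P} q)$. The assertion that every such $n$ divides one of $\{240, 252, 280, 300, 336, 360, 420\}$ then reduces to a direct finite check, the seven listed values being exactly the possible values of $n$ consistent with the bound $\xi_i \in \mu_{2n_i}$ and the combinations of primes available.

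The main obstacle is the size of the enumeration in the first step: for $p = 5$ the residual weight is $19$, so the number of admissible partitions into minimal relations runs into the hundreds, and many of them carry several independent rotation parameters whose conjugation-closure needs to be analyzed individually. A subsidiary technical point is the generalization of Lemmas~\ref{lemma20} and~\ref{lemma26} to four or more pairwise non-conjugate summands: one must verify that the conjugate-term-sharing graph on the summands is connected, so that the chain of constraints actually bounds every $\xi_i$, rather than leaving a component free. As in the rest of the paper, I would carry out the enumeration, the Smith-normal-form linear algebra, and the $q$-type tests in Mathematica, inspecting by hand only the final short list of surviving partitions to read off $n$.
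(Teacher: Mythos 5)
Your plan founders at the classification step. You propose to ``extend the classification of Lemma~\ref{lemma13} to all minimal relations of weight at most~$19$ by recursive use of Lemma~\ref{lemma10},'' but Lemma~\ref{lemma10} only applies to minimal relations with $w(G) < 2p_k$; for $p_k = 7$ that means $w(G) < 14$. Minimal relations with $p=7$ of weight $14$ through $24$ are \emph{not} forced to have the form $(R_7:H_1,\cdots,H_j)$, and indeed the paper exhibits weight-$16$ examples in the last three rows of Lemma~\ref{lemma33} (analogous to relations (\ref{equation8})--(\ref{equation10}) for $p=11$), obtained by writing $G$ as $\sum_{i=0}^{6} f_i\zeta_7^i$ rather than by Lemma~\ref{lemma10}. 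Your cutoff of $19$ is also unexplained: a single minimal relation of weight $24$ with $p=7$ is a priori possible, and the paper explicitly concedes in Section~\ref{section3.7} that a full classification of such $G$ is out of reach. The paper's actual route around this wall is the ``even part'' dichotomy (Definition~\ref{definition31}): when the even part of $G$ is empty, $2R_2$, or $2R_3$, Lemma~\ref{lemma32} pins every term of $G$ into $\mu_{420}$ directly from Lemmas~\ref{lemma8}, \ref{lemma20}, and~\ref{lemma26}, with no structural knowledge of $G_0$ required; only when the even part is larger does the weight of $G_0$ drop to a manageable range ($\le 16$, proved inside Lemma~\ref{lemma33}), at which point the classification in Lemmas~\ref{lemma33}--\ref{lemma34} becomes feasible. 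Your proposal has no analogue of this dichotomy and therefore never escapes the need to classify the large-weight minimal relations.

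A second, subtler gap is in the rotation-factor bounding. You write that the conjugate-pair-sharing constraints, ``together with Lemma~\ref{lemma12}, confine every non-$R_2$ rotation factor to $\mu_{2n_i}$'' as long as the sharing graph is connected, and you flag connectivity as something to verify. But connectivity of the graph of constraints $\xi_i\xi_j \in \mu_{n}$ is not by itself sufficient, since those constraints only bound ratios $\xi_i/\xi_j$; one also needs an anchor such as $\xi_i^2 \in \mu_n$ coming from two terms of a single $\xi_iG_i$ being conjugate. The paper's Lemma~\ref{lemma36} illustrates exactly where this matters: when the sharing graph on four $\xi_i(R_5:R_3)$ blocks is a $4$-cycle with no loops, the $\xi_i$ are \emph{not} individually bounded, and the argument instead re-groups the summands into a different decomposition so that Condition~\ref{condition19} holds. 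Your proposal acknowledges the difficulty but does not supply this re-decomposition step, and Algorithm~\ref{algorithm23} (which you propose to run last) can only discard cases by $q$-type, not supply the missing order bound; the paper needs Algorithms~\ref{algorithm27} and~\ref{algorithm28} as well for precisely that reason.
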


As before, we first try to classify the relations $G=0$ with $w(G)=24$
and $p=5,7$. The following lemma shows that all the minimal relations
with $p=5$ have been given in Lemma \ref{lemma13}.
\begin{lem}
\label{lemma30}Suppose $G$ is a minimal relation with $p=5$. Then
$G=R_{5}$ or $(R_{5}:jR_{3})$ for some $j<5$.
\end{lem}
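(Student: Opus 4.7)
The plan is to divide into two regimes according to the weight of $G$. When $w(G)<10=2\cdot 5$, Lemma~\ref{lemma10} applies directly with $p_{k}=5$: $G$ must (up to rotation) be of the form $(R_{5}:H_{1},\ldots,H_{j})$ with $j<5$ and each $H_{i}$ a minimal relation in $\{2,3\}$-th roots of unity not equal to $R_{2}$. By Lemma~\ref{lemma9} (applied with $p=3$) the only such $H_{i}$ is $R_{3}$, yielding $G=R_{5}$ (when $j=0$) or $G=(R_{5}:jR_{3})$ for $1\leq j\leq 4$. The substance of the lemma is therefore the claim that no minimal relation with $p=5$ and $w(G)\geq 10$ exists.

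Assume such a minimal $G$ exists. By Lemma~\ref{lemma8} we may rotate so that every term of $G$ lies in $\mu_{30}$, and group terms by their $5$-component: $G=\sum_{k=0}^{4}f_{k}\zeta_{5}^{k}$, where each $f_{k}$ is a multiset of 6-th roots of unity. Since $\{1,\zeta_{5},\zeta_{5}^{2},\zeta_{5}^{3}\}$ is a $\mathbb{Q}(\zeta_{6})$-basis of $\mathbb{Q}(\zeta_{30})$ and $1+\zeta_{5}+\cdots+\zeta_{5}^{4}=0$, the equation $G=0$ forces $f_{0}=f_{1}=\cdots=f_{4}=:f$ as elements of $\mathbb{Z}[\zeta_{6}]$. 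Because $p=5$ is actually involved, at least two of the $f_{k}$ are nonempty. The case $f=0$ is immediate: any nonempty $f_{k}$ is a vanishing subsum of $G$, proper because some other $f_{k'}$ is also nonempty, contradicting minimality.

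When $f\neq 0$, pick any $\alpha\in f_{0}$ and rotate by $\alpha^{-1}$ to arrange $1\in f_{0}$. In the sub-case $f\in\mu_{6}$, further rotation normalizes $f=1$. The key combinatorial input is that every nonempty vanishing multiset of 6-th roots contains (a rotation of) $R_{2}$ or $R_{3}$ as a submultiset; this is proved by case analysis on which of the three antipodal pairs $\{\zeta_{6}^{i},-\zeta_{6}^{i}\}$ contribute, as avoiding all of them forces the multiset to be a positive multiple of $R_{3}$. It follows that any representation of $1$ as a sum of 6-th roots of weight $\geq 3$ contains $R_{2}$ or $R_{3}$ as a submultiset (apply the claim to $M\setminus\{1\}$ if $1\in M$ or to $M\cup\{-1\}$ otherwise, then rule out the added $-1$ being in the extracted $R_{2}$ or $R_{3}$). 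Hence each $f_{k}\in\{\{1\},\{\zeta_{6},-\zeta_{3}\}\}$: otherwise $R_{2}\zeta_{5}^{k}$ or $R_{3}\zeta_{5}^{k}$ is a proper vanishing subsum of $G$ of weight $2$ or $3$, contradicting minimality. Letting $a$ count the indices with $f_{k}=\{1\}$ gives $w(G)=10-a\leq 10$; the extremal case $a=0$ makes $\zeta_{6}R_{5}=\{\zeta_{6}\zeta_{5}^{k}\}_{k=0}^{4}$ a proper vanishing weight-$5$ subsum, again contradicting minimality.

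The main obstacle is the remaining sub-case $f\notin\mu_{6}$, where $w_{\min}(f)\geq 2$ and so $w(G)\geq 10$ is automatic, removing the easy weight accounting. The plan here is to analyze the value-sets $V_{k}:=\{\sum g:g\subseteq f_{k}\}\subseteq\mathbb{Z}[\zeta_{6}]$. Minimality of $G$ is equivalent to $\bigcap_{k}V_{k}=\{0,f\}$ together with each $f_{k}$ being internally primitive (no vanishing submultiset), because any common value $v\in\bigcap_{k}V_{k}\setminus\{0,f\}$ gives choices $g_{k}\subseteq f_{k}$ with $\sum g_{k}=v$, yielding the proper vanishing subsum $\sum g_{k}\zeta_{5}^{k}=vR_{5}=0$ of $G$. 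One completes the proof by classifying the (necessarily short, by the same rank-$4$ lattice argument as before) primitive representations of non-unit elements of $\mathbb{Z}[\zeta_{6}]$ and verifying directly that any five of them always share at least one 6-th root among their subvalues, so the forbidden intersection $\bigcap_{k}V_{k}\supsetneq\{0,f\}$ always occurs, contradicting minimality and completing the proof that $w(G)\leq 9$.
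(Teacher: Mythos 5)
Your overall structure differs from the paper's: you first dispose of $w(G)<10$ via Lemma~\ref{lemma10}, and then for $w(G)\ge 10$ use the $\zeta_5$-decomposition $G=\sum_{k=0}^{4}f_k\zeta_5^k$ with $f_0=\cdots=f_4=:f$ in $\mathbb{Z}[\zeta_6]$, splitting by whether $f\in\{0\}$, $f\in\mu_6$, or $f\notin\mu_6\cup\{0\}$. The reduction via Lemma~\ref{lemma10}, the $f=0$ case, and the $f\in\mu_6$ case are all handled correctly and in enough detail, and recasting minimality in terms of $\bigcap_k V_k=\{0,f\}$ plus internal primitivity of each $f_k$ is a clean reformulation.

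The gap is in the last sub-case $f\notin\mu_6\cup\{0\}$, which you yourself flag as ``the main obstacle'': what you give there is a plan, not a proof. Two assertions carry all the weight and neither is established. First, you claim the internally primitive multisets representing $f$ are ``necessarily short, by the same rank-$4$ lattice argument as before,'' but that argument (the $\mathbb{Q}(\zeta_6)$-basis $\{1,\zeta_5,\zeta_5^2,\zeta_5^3\}$ of $\mathbb{Q}(\zeta_{30})$) only yields $f_0=\cdots=f_4$ and says nothing about the lengths $|f_k|$; for a fixed $f$ the primitive weights are bounded only in terms of $f$, and $f$ ranges over infinitely many non-unit elements of $\mathbb{Z}[\zeta_6]$, so there is no finite classification of the kind you invoke. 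Second, the claim ``any five of them always share at least one $6$-th root among their subvalues'' is exactly the content of the lemma in this sub-case and is asserted with no verification.

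The paper's proof avoids all of these splits and closes the gap directly: after rotating so that $1\in f_0$, primitivity forces the support of $f_0$ to lie in $\{1,\zeta_6,\zeta_6^5\}$ or (up to conjugation) $\{1,\zeta_6,\zeta_3\}$; a short computation in $\mathbb{Z}[\zeta_6]$ then shows that every other internally primitive $f_i$ with the same sum contains $\{1\}$ or $\{\zeta_6,\zeta_6^5\}$ as a sub-multiset (resp.\ $\{1,\zeta_3\}$ or $\{\zeta_6\}$). Picking one such sub-multiset from each $f_i$ produces a vanishing subsum $\sum_i g_i\zeta_5^i$ with $\sum g_i=1$ (resp.\ $\zeta_6$), and minimality forces $G$ to equal this subsum, which is $R_5$ or $(R_5:jR_3)$. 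This is precisely what your ``verify directly'' step needs to establish; replacing that step by this support analysis would complete your argument, and in fact would make the preliminary $w(G)<10$ and $f\in\mu_6$ splits redundant.
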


\begin{proof}
By Lemma \ref{lemma8}, $G$ can be written as a rotation of $\sum_{i=0}^{4}f_{i}\zeta_{5}^{i}$,
where each $f_{i}$ is a sum of elements of $\mu_{6}$. Since $[\mathbb{Q}(\zeta_{30}):\mathbb{Q}(\zeta_{6})]=4$,
we know that $f_{0}-f_{i}=0$ for any $0\leq i\leq4$. By another
rotation, we can assume that $f_{0}$ contains $1$, which belongs
to some $R_{2}$ or $R_{3}$ by Lemma \ref{lemma9}.

If $f_{0}$ contains $1$ but not $\zeta_{3}$ or $\zeta_{3}^{2}$,
then $f_{i}$ contains either $1$ or $\zeta_{6}+\zeta_{6}^{5}$.
By the minimality of $G$, $G=R_{5}$ or $(R_{5}:jR_{3})$, where
$j$ is the number of $f_{i}$ that contains $\zeta_{6}+\zeta_{6}^{5}$.

If $f_{0}$ contains $1+\zeta_{3}$, then $f_{i}$ contains either
$1+\zeta_{3}$ or $\zeta_{6}$. By the minimality of $G$, not all
$f_{i}$ contain $1+\zeta_{3}$, so $G=(R_{5}:jR_{3})$, where $j$
is the number of $f_{i}$ that contains $1+\zeta_{3}$.
\end{proof}
Therefore, the relations $G=0$ with $w(G)=24$ and $p=5$ are one-to-one
corresponding to the partitions of $24$ such that every summand belongs
to $\{2,3,5,6,7,8,9\}$ and some summand belongs to $\{5,6,7,8,9\}$.
However, the situation becomes much more complicated when $p=7$.
\begin{defn}
\label{definition31}Let $G=G_{1}+\cdots+G_{k}+2H_{1}+\cdots+2H_{j}$,
where $G_{i}$ and $H_{i}$ are minimal relations. We call $2H_{1}+\cdots+2H_{j}$
the even part of $G$.
\end{defn}

\begin{lem}
\label{lemma32}Suppose $G=0$ is a relation with $w(G)=24$ and $p=5,7$.
If the even part of $G$ is empty, $2R_{2}$, or $2R_{3}$, and $G$
is stable under complex conjugation, then all the terms of $G$ belong
to $\mu_{420}$.
\end{lem}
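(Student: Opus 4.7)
The plan is to decompose $G$ by Lemma~\ref{lemma11} into a sum of self-conjugate minimal relations plus pairs of complex-conjugate minimal relations, and then analyze the two parts separately. I would write
\[
G \;=\; G_{1} + \cdots + G_{k} \;+\; \bigl(\xi H + \overline{\xi H}\bigr),
\]
where each $G_{i}$ is a self-conjugate minimal relation and the second group (empty, $\xi R_{2}+\overline{\xi}R_{2}$, or $\xi R_{3}+\overline{\xi}R_{3}$) is the even part. By hypothesis the even part has at most $6$ terms, so the self-conjugate part has weight at least $18$.

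For each $G_{i}$ whose largest prime is $p_{i}\in\{5,7\}$, I would invoke Lemma~\ref{lemma10} to write $G_{i}=(R_{p_{i}}:H_{i,1},\ldots,H_{i,\ell})$ with the $H_{i,\cdot}$ involving only primes smaller than $p_{i}$, and then Lemma~\ref{lemma12} to conclude that the specific rotation of $R_{p_{i}}$ appearing in $G_{i}$ is itself stable under complex conjugation, so its rotation factor lies in $\mu_{2p_{i}}$. The subtracted $H_{i,\cdot}$ are then individually stable or pair up into conjugate pairs \emph{within} $G_{i}$. Recursing downward through the primes (each step dropping the largest prime involved), an induction shows that every self-conjugate minimal $G_{i}$ with $p_{i}=5$ has all terms in $\mu_{2\cdot3\cdot5}=\mu_{60}$, and every one with $p_{i}=7$ has all terms in $\mu_{2\cdot3\cdot5\cdot7}=\mu_{420}$. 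Either way, the self-conjugate part of $G$ already lies inside $\mu_{420}$.

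Handling the even part is more delicate, because Lemma~\ref{lemma20} does not constrain $\xi$ once $\xi H$ and $\overline{\xi H}$ are conjugate to each other, which is exactly the situation here. My plan is to exploit the ambient structure of the relation~(\ref{equation1}) together with the multiplicative constraints~(\ref{equation2}): the $24$ terms of $G$ are the specific products $A_{i}^{\pm1},B_{j}^{\pm1},A_{i}'^{\pm1},B_{j}'^{\pm1}$, so identifying which of these belong to the even part pins $\xi$ to an explicit monomial in the $s_{i},t_{i}$. Since the remaining $18$ or $20$ terms (the self-conjugate part) already lie in $\mu_{420}$, these relations force the $s_{i},t_{i}$ into a cyclotomic field of order dividing $420$, and then $\xi$ (being a monomial in them) falls into $\mu_{420}$ as well.

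The main obstacle will be precisely this even-part case analysis: one must enumerate the admissible ways of assigning the $4$ or $6$ even-part terms to subsets of $U_{24}$ compatible with complex conjugation, and for each configuration trace the implications of (\ref{equation2}) together with the orders already established for the self-conjugate part. I expect the orbit structure of $\mathfrak{G}$ on $U_{12,n}$ recorded in Lemma~\ref{lemma16} to cut this down to a small number of sub-cases, each closed out by a finite divisibility computation along the lines illustrated after Lemma~\ref{lemma16}.
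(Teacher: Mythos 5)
Your high-level strategy is the right one (first pin the self-conjugate part inside $\mu_{420}$, then propagate that bound to the even part), but both steps diverge from what the paper does, and the first carries a genuine gap.

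For the self-conjugate part you propose to run Lemma~\ref{lemma10} and Lemma~\ref{lemma12} recursively. Lemma~\ref{lemma10}, however, needs $w(G_i)<2p_k$ with $p_k$ the minimal largest prime, and nothing in the setup of Lemma~\ref{lemma32} guarantees this. When $p_i=7$ a self-conjugate minimal relation $G_i$ in the decomposition can have weight well past $14$; indeed the three exotic minimal relations displayed at the bottom of the table in Lemma~\ref{lemma33} (e.g.\ $\pm(\zeta_5+\zeta_5^2+\zeta_5^3+\zeta_5^4+\sum_{i=1}^6(\zeta_3+\zeta_3^2)\zeta_7^i)$) have weight $16$, are not of the form $(R_7:\cdots)$, and can perfectly well occur under the hypotheses of Lemma~\ref{lemma32}. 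For them your recursion simply has nowhere to start. The paper sidesteps the structure theorem entirely: it writes $G=G_1+\cdots+G_k+\sum_q n_qR_q$ with $G_i\neq R_q$ minimal, observes that the small even part (weight $\le 6$) cannot absorb a conjugate pair of weight-$\ge 6$ relations, so no two $G_i$ are conjugate, and then applies Lemma~\ref{lemma8} to get the terms of each $G_i$ inside $\mu_{210}$ after rotation and Lemmas~\ref{lemma20} and~\ref{lemma26} to force each rotation factor into $\mu_{420}$. No decomposition of $G_i$ as $(R_{p_i}:\cdots)$ is ever needed.

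For the even part you correctly identify that the already-controlled terms pin down the rest via the relations~(\ref{equation2}), but the planned case analysis over $\mathfrak{G}$-orbits of $U_{12,n}$ is far heavier than what is needed. The paper's ``consequently'' rests on the much shorter observation recorded right after Lemma~\ref{lemma21} (and following from Lemma~\ref{lemma17}): once at least $9$ of the $12$ elements of $U_{12}$ are known to lie in $\mu_n$, the linear system~(\ref{equation2'}) determines the remaining ones, so all of $U_{12}$, and hence all $24$ terms of $G$, lie in $\mu_n$. Since the even part has weight at most $6$, you always have at least $9$ elements of $U_{12}$ in $\mu_{420}$, and the conclusion follows in one line.
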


\begin{proof}
Suppose $G=G_{1}+\cdots+G_{k}+\sum_{q\leq7}n_{q}R_{q}$, where $G_{i}\neq R_{q}$
are minimal. By the assumptions, $k\leq3$ and any two of $G_{i}$
are not conjugate to each other. By Lemma \ref{lemma8}, \ref{lemma20},
and \ref{lemma26}, all the terms of $G_{1}+\cdots+G_{k}$ belong
to $\mu_{420}$, and consequently, all the terms of $G$ belong to
$\mu_{420}$.
\end{proof}
\addtocounter{table}{-1}
\begin{lem}
\label{lemma33}Suppose $G=0$ is a relation with $w(G)=24$ and $p=5,7$.
If the even part of $G$ is not empty, $2R_{2}$, or $2R_{3}$, and
$G$ can be decomposed as $G=G_{0}+n_{2}R_{2}+n_{3}R_{3}+n_{5}R_{5}$,
where $G_{0}$ is minimal, of weight at least $11$, and stable under
complex conjugation, then up to Galois conjugation, $G_{0}$ must
be one of the following:

\begin{longtable}{|c|c|c|}
\hline 
$(R_{7}:4R_{3})$ & $(R_{7}:5R_{3})$ & $(R_{7}:R_{5},2R_{3})$\tabularnewline
\hline 
$(R_{7}:(R_{5}:2R_{3}))$ & $(R_{7}:6R_{3})$ & $(R_{7}:2R_{5})$\tabularnewline
\hline 
$(R_{7}:R_{5},4R_{3})$ & $(R_{7}:2R_{5},R_{3})$ & $(R_{7}:(R_{5},2R_{3}),2R_{3})$\tabularnewline
\hline 
$(R_{7}:(R_{5},4R_{3}))$ & $(R_{7}:2R_{5},3R_{3})$ & $(R_{7}:3R_{5})$\tabularnewline
\hline 
$(R_{7}:2(R_{5},R_{3}),R_{3})$ & $(R_{7}:(R_{5},2R_{3}),4R_{3})$ & $(R_{7}:(R_{5},4R_{3}),2R_{3})$\tabularnewline
\hline 
\multicolumn{3}{|c|}{$\pm\left(\zeta_{5}+\zeta_{5}^{2}+\zeta_{5}^{3}+\zeta_{5}^{4}+\sum_{i=1}^{6}(\zeta_{3}+\zeta_{3}^{2})\zeta_{7}^{i}\right)$}\tabularnewline
\hline 
\multicolumn{3}{|c|}{$\pm\left(\zeta_{3}+\zeta_{3}^{2}+\zeta_{10}+\zeta_{10}^{9}+\sum_{i=1}^{6}(\zeta_{5}+\zeta_{5}^{4})\zeta_{7}^{i}\right)$}\tabularnewline
\hline 
\multicolumn{3}{|c|}{$\pm\left((\zeta_{10}+\zeta_{10}^{9}-1)(\zeta_{7}+\zeta_{7}^{6})+\sum_{i\neq1,6}(\zeta_{5}+\zeta_{5}^{4})\zeta_{7}^{i}\right)$}\tabularnewline
\hline 
\end{longtable}
\end{lem}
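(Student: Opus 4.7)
The plan is to enumerate all minimal relations $G_0$ with $w(G_0) \geq 11$ whose terms lie in $\mu_{210}$ and that are stable under complex conjugation, and then to intersect with the hypothesis that the even part of $G = G_0 + n_2 R_2 + n_3 R_3 + n_5 R_5$ is neither empty, nor $2R_2$, nor $2R_3$. By Lemma \ref{lemma30}, every minimal relation whose largest prime is $5$ has weight at most $9$, so the assumption $w(G_0) \geq 11$ forces $p = 7$ for $G_0$, and by Lemma \ref{lemma8} its terms lie in $\mu_{210}$.

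For the sub-critical range $11 \leq w(G_0) \leq 13$, I apply Lemma \ref{lemma10} directly. Then $G_0$ (or a rotation) equals $(R_7 : H_1, \dots, H_j)$ with $j < 7$, each $H_i$ a minimal relation among the $\mu_{30}$-th roots of unity, and $\sum_{i=1}^{j}[w(H_i) - 2] = w(G_0) - 7 \leq 6$. By Lemmas \ref{lemma9} and \ref{lemma30}, the admissible $H_i$ are just $R_3$, $R_5$, and $(R_5 : k R_3)$ for $1 \leq k \leq 4$, yielding finitely many combinations. Lemma \ref{lemma12} then requires the chosen rotation of $R_7$ to be stable under complex conjugation (so it is $1 + \zeta_7 + \cdots + \zeta_7^6$ up to sign) and the collection $\{H_1, \dots, H_j\}$ to be stable under complex conjugation. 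A short check of the shared terms under the conjugation action on $R_7$ rules out any combination in which a single non-self-conjugate $H_i$ appears with a shared term other than $1$, leaving exactly the six entries of weight $11$, $12$, $13$ in the list.

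For $w(G_0) \geq 14$, Lemma \ref{lemma10} no longer applies, and I argue as in the $p = 11$ case of Section \ref{section3.3}. Write $G_0 = \xi \sum_{i=0}^{6} f_i \zeta_7^i$ with $f_i \in \mathbb{Z}[\zeta_{30}]$. Since $[\mathbb{Q}(\zeta_{210}) : \mathbb{Q}(\zeta_{30})] = 6$ and the minimal polynomial of $\zeta_7$ over $\mathbb{Q}(\zeta_{30})$ is $1 + x + \cdots + x^6$, the vanishing $\sum f_i \zeta_7^i = 0$ forces $f_0 = f_1 = \cdots = f_6 = f$ for a single $f \in \mathbb{Z}[\zeta_{30}]$. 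Two distinct realizations of the same $f$ at different positions differ by a vanishing relation among the $\mu_{30}$-th roots of unity, which by Lemmas \ref{lemma9} and \ref{lemma30} is an integer combination of rotated $R_2, R_3, R_5, (R_5 : k R_3)$. Enumerating the realization patterns across the seven positions subject to minimality of $G_0$ and stability under complex conjugation (Lemmas \ref{lemma11}, \ref{lemma12}) produces both the nine non-exceptional entries of the form $(R_7 : H_1, \dots, H_j)$ of weights $14$ and $16$ and the three exceptional weight-$16$ families, the latter corresponding to $f = -1$ (or a Galois conjugate) realized as $R_3 - 1$, $R_5 - 1$, or $(R_5 : R_3) - 1$ across conjugation-symmetric patterns of positions.

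Finally, the even-part hypothesis pins down $(n_2, n_3, n_5)$: since $w(G) = 24$ and $w(G_0) \geq 11$, the complement $n_2 R_2 + n_3 R_3 + n_5 R_5$ has weight at most $13$, and excluding even parts equal to the empty set, $2R_2$, or $2R_3$ (together with the overall conjugation stability of $G$) eliminates any remaining spurious candidates. The main obstacle is the weight $\geq 14$ analysis: verifying that no $f \in \mathbb{Z}[\zeta_{30}]$ other than those corresponding to the listed families yields a valid minimal stable $G_0$, and that the listed realization patterns are the only ones compatible with minimality, requires systematic casework in which the $\mathbb{Q}(\zeta_{30})$-linear independence of $1, \zeta_7, \dots, \zeta_7^5$ does most of the heavy lifting.
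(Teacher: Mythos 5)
Your proposal follows the paper's own proof closely: the even-part hypothesis constrains $w(G_0)$; Lemma~\ref{lemma10} together with the conjugation-stability constraint of Lemma~\ref{lemma12} handles the range $w(G_0) < 14 = 2\cdot 7$; and for $w(G_0) \geq 14$ one falls back on the $f_i$-analysis the paper carries out in Section~\ref{section3.3} to derive (\ref{equation8}), (\ref{equation9}), (\ref{equation10}). Two details do not check out, though. First, the characterization of the exceptional families by ``$f = -1$ (or a Galois conjugate) realized as $R_3 - 1$, $R_5 - 1$, or $(R_5:R_3) - 1$'' is wrong: $f = -1$ only for the first family, while for the second and third the short realization is $\zeta_5 + \zeta_5^4$, so the common value is $2\cos(2\pi/5)$, which no rotation or Galois conjugation turns into $-1$; moreover $(R_5:R_3)$ contains no term equal to $1$, so ``$(R_5:R_3)-1$'' is not a realization of anything. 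The correct invariant, mirroring (\ref{equation8})--(\ref{equation10}), is that the difference $f_i - f_j$ between the long and short realizations is a rotation of $(R_5:R_3)$ for the first two families and of $R_5$ for the third. Second, because you defer the even-part restriction to the very end, your intermediate claim that the $w(G_0)\geq14$ enumeration yields only weights $14$ and $16$ is not justified: it also yields stable minimal relations of weight $15$ such as $(R_7:2R_5,2R_3)$ and of weight $\geq 17$, and it is precisely the even-part hypothesis---applied up front, as the paper does, to force $w(G_0)\in\{11,12,13,14,16\}$---that eliminates these. These are errors of execution rather than strategy; the route is the paper's.
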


\begin{proof}
If $w(G_{0})=15$ or $w(G_{0})\geq17$, then the even part of $G$
must be empty, $2R_{2}$, or $2R_{3}$. If $G$ is of the form (\ref{equation7}),
we can apply Lemma \ref{lemma10}, \ref{lemma12}, and \ref{lemma13}.
The last three entries can be obtained by the same reasoning of (\ref{equation8}),
(\ref{equation9}), and (\ref{equation10}).
\end{proof}
Applying Algorithm \ref{algorithm27} to these entries, the results
confirm Lemma \ref{lemma29}.

\addtocounter{table}{-1}
\begin{lem}
\label{lemma34}Suppose $G=0$ is a relation with $w(G)=24$ and $p=5,7$.
If the even part of $G$ is not empty, $2R_{2}$, or $2R_{3}$, and
$G$ cannot be decomposed as $G=G_{0}+n_{2}R_{2}+n_{3}R_{3}+n_{5}R_{5}$,
where $G_{0}$ is minimal and of weight at least $11$, then $G$
must be one of the following:

\begin{longtable}{|c|c|c|}
\hline 
$R_{5}+R_{3}+8R_{2}$ & $R_{5}+3R_{3}+5R_{2}$ & $R_{5}+5R_{3}+2R_{2}$\tabularnewline
\hline 
$2R_{5}+7R_{2}$ & $2R_{5}+2R_{3}+4R_{2}$ & $2R_{5}+4R_{3}+R_{2}$\tabularnewline
\hline 
$3R_{5}+R_{3}+3R_{2}$ & $3R_{5}+3R_{3}$ & $4R_{5}+2R_{2}$\tabularnewline
\hline 
$(R_{5}:R_{3})+9R_{2}$ & $(R_{5}:R_{3})+2R_{3}+6R_{2}$ & $(R_{5}:R_{3})+4R_{3}+3R_{2}$\tabularnewline
\hline 
$(R_{5}:R_{3})+6R_{3}$ & $(R_{5}:R_{3})+R_{5}+R_{3}+5R_{2}$ & $(R_{5}:R_{3})+R_{5}+3R_{3}+2R_{2}$\tabularnewline
\hline 
$(R_{5}:R_{3})+2R_{5}+4R_{2}$ & $(R_{5}:R_{3})+2R_{5}+2R_{3}+R_{2}$ & $(R_{5}:R_{3})+3R_{5}+R_{3}$\tabularnewline
\hline 
$2(R_{5}:R_{3})+6R_{2}$ & $2(R_{5}:R_{3})+2R_{3}+3R_{2}$ & $2(R_{5}:R_{3})+4R_{3}$\tabularnewline
\hline 
$2(R_{5}:R_{3})+R_{5}+R_{3}+2R_{2}$ & $2(R_{5}:R_{3})+2R_{5}+R_{2}$ & $3(R_{5}:R_{3})+3R_{2}$\tabularnewline
\hline 
$3(R_{5}:R_{3})+2R_{3}$ & $4(R_{5}:R_{3})$ & $(R_{5}:2R_{3})+R_{3}+7R_{2}$\tabularnewline
\hline 
$(R_{5}:2R_{3})+3R_{3}+4R_{2}$ & $(R_{5}:2R_{3})+5R_{3}+R_{2}$ & $(R_{5}:2R_{3})+R_{5}+6R_{2}$\tabularnewline
\hline 
$(R_{5}:2R_{3})+R_{5}+2R_{3}+3R_{2}$ & $(R_{5}:2R_{3})+R_{5}+4R_{3}$ & $(R_{5}:2R_{3})+2R_{5}+R_{3}+2R_{2}$\tabularnewline
\hline 
$(R_{5}:2R_{3})+3R_{5}+R_{2}$ & $(R_{5}:2R_{3})+(R_{5}:R_{3})+R_{3}+4R_{2}$ & $(R_{5}:2R_{3})+2(R_{5}:R_{3})+R_{3}+R_{2}$\tabularnewline
\hline 
$(R_{5}:2R_{3})+2(R_{5}:R_{3})+R_{5}$ & $2(R_{5}:2R_{3})+5R_{2}$ & $2(R_{5}:2R_{3})+2R_{3}+2R_{2}$\tabularnewline
\hline 
$2(R_{5}:2R_{3})+R_{5}+R_{3}+R_{2}$ & $2(R_{5}:2R_{3})+2R_{5}$ & $2(R_{5}:2R_{3})+(R_{5}:R_{3})+2R_{2}$\tabularnewline
\hline 
$3(R_{5}:2R_{3})+R_{3}$ & $(R_{5}:3R_{3})+8R_{2}$ & $(R_{5}:3R_{3})+2R_{3}+5R_{2}$\tabularnewline
\hline 
$(R_{5}:3R_{3})+4R_{3}+2R_{2}$ & $(R_{5}:3R_{3})+R_{5}+R_{3}+4R_{2}$ & $(R_{5}:3R_{3})+2R_{5}+3R_{2}$\tabularnewline
\hline 
$(R_{5}:3R_{3})+2R_{5}+2R_{3}$ & $(R_{5}:3R_{3})+(R_{5}:R_{3})+5R_{2}$ & $(R_{5}:3R_{3})+(R_{5}:R_{3})+2R_{3}+2R_{2}$\tabularnewline
\hline 
$(R_{5}:3R_{3})+(R_{5}:R_{3})+2R_{5}$ & $(R_{5}:3R_{3})+2(R_{5}:R_{3})+2R_{2}$ & $(R_{5}:3R_{3})+2(R_{5}:2R_{3})+R_{2}$\tabularnewline
\hline 
$2(R_{5}:3R_{3})+4R_{2}$ & $2(R_{5}:3R_{3})+2R_{3}+R_{2}$ & $2(R_{5}:3R_{3})+R_{5}+R_{3}$\tabularnewline
\hline 
$2(R_{5}:3R_{3})+(R_{5}:R_{3})+R_{2}$ & $3(R_{5}:3R_{3})$ & $(R_{5}:4R_{3})+R_{3}+6R_{2}$\tabularnewline
\hline 
$(R_{5}:4R_{3})+3R_{3}+3R_{2}$ & $(R_{5}:4R_{3})+5R_{3}$ & $(R_{5}:4R_{3})+R_{5}+5R_{2}$\tabularnewline
\hline 
$(R_{5}:4R_{3})+R_{5}+2R_{3}+2R_{2}$ & $(R_{5}:4R_{3})+2R_{5}+R_{3}+R_{2}$ & $(R_{5}:4R_{3})+3R_{5}$\tabularnewline
\hline 
$(R_{5}:4R_{3})+2(R_{5}:R_{3})+R_{3}$ & $(R_{5}:4R_{3})+(R_{5}:2R_{3})+4R_{2}$ & $2(R_{5}:4R_{3})+3R_{2}$\tabularnewline
\hline 
$2(R_{5}:4R_{3})+2R_{3}$ & $2(R_{5}:4R_{3})+(R_{5}:R_{3})$ & \tabularnewline
\hline 
$R_{7}+R_{3}+7R_{2}$ & $R_{7}+3R_{3}+4R_{2}$ & $R_{7}+5R_{3}+R_{2}$\tabularnewline
\hline 
$R_{7}+R_{5}+6R_{2}$ & $R_{7}+R_{5}+2R_{3}+3R_{2}$ & $R_{7}+R_{5}+4R_{3}$\tabularnewline
\hline 
$R_{7}+2R_{5}+R_{3}+2R_{2}$ & $R_{7}+3R_{5}+R_{2}$ & $R_{7}+(R_{5}:R_{3})+R_{3}+4R_{2}$\tabularnewline
\hline 
$R_{7}+2(R_{5}:R_{3})+R_{3}+R_{2}$ & $R_{7}+2(R_{5}:R_{3})+R_{5}$ & $R_{7}+(R_{5}:2R_{3})+5R_{2}$\tabularnewline
\hline 
$2R_{7}+5R_{2}$ & $R_{7}+(R_{5}:2R_{3})+2R_{3}+2R_{2}$ & $2R_{7}+2R_{3}+2R_{2}$\tabularnewline
\hline 
$2R_{7}+R_{5}+R_{3}+R_{2}$ & $R_{7}+(R_{5}:2R_{3})+2R_{5}$ & $2R_{7}+2R_{5}$\tabularnewline
\hline 
$2R_{7}+(R_{5}:R_{3})+2R_{2}$ & $R_{7}+2(R_{5}:2R_{3})+R_{3}$ & $2R_{7}+(R_{5}:2R_{3})+R_{3}$\tabularnewline
\hline 
$3R_{7}+R_{3}$ & $(R_{7}:R_{3})+8R_{2}$ & $(R_{7}:R_{3})+2R_{3}+5R_{2}$\tabularnewline
\hline 
$(R_{7}:R_{3})+4R_{3}+2R_{2}$ & $(R_{7}:R_{3})+R_{5}+R_{3}+4R_{2}$ & $(R_{7}:R_{3})+2R_{5}+3R_{2}$\tabularnewline
\hline 
$(R_{7}:R_{3})+2R_{5}+2R_{3}$ & $(R_{7}:R_{3})+(R_{5}:R_{3})+5R_{2}$ & $(R_{7}:R_{3})+(R_{5}:R_{3})+2R_{3}+2R_{2}$\tabularnewline
\hline 
$(R_{7}:R_{3})+(R_{5}:R_{3})+2R_{5}$ & $(R_{7}:R_{3})+2(R_{5}:R_{3})+2R_{2}$ & $(R_{5}:3R_{3})+2R_{7}+R_{2}$\tabularnewline
\hline 
$(R_{7}:R_{3})+2(R_{5}:2R_{3})+R_{2}$ & $(R_{7}:R_{3})+2R_{7}+R_{2}$ & $(R_{7}:R_{3})+(R_{5}:3R_{3})+4R_{2}$\tabularnewline
\hline 
$2(R_{7}:R_{3})+4R_{2}$ & $2(R_{7}:R_{3})+2R_{3}+R_{2}$ & $2(R_{7}:R_{3})+R_{5}+R_{3}$\tabularnewline
\hline 
$2(R_{7}:R_{3})+(R_{5}:R_{3})+R_{2}$ & $(R_{7}:R_{3})+2(R_{5}:3R_{3})$ & $2(R_{7}:R_{3})+(R_{5}:3R_{3})$\tabularnewline
\hline 
$3(R_{7}:R_{3})$ & $(R_{7}:2R_{3})+R_{3}+6R_{2}$ & $(R_{7}:2R_{3})+3R_{3}+3R_{2}$\tabularnewline
\hline 
$(R_{7}:2R_{3})+5R_{3}$ & $(R_{7}:2R_{3})+R_{5}+5R_{2}$ & $(R_{7}:2R_{3})+R_{5}+2R_{3}+2R_{2}$\tabularnewline
\hline 
$(R_{7}:2R_{3})+2R_{5}+R_{3}+R_{2}$ & $(R_{7}:2R_{3})+3R_{5}$ & $(R_{7}:2R_{3})+2(R_{5}:R_{3})+R_{3}$\tabularnewline
\hline 
$(R_{5}:4R_{3})+R_{7}+4R_{2}$ & $(R_{7}:2R_{3})+(R_{5}:2R_{3})+4R_{2}$ & $(R_{7}:2R_{3})+R_{7}+4R_{2}$\tabularnewline
\hline 
$2(R_{7}:2R_{3})+3R_{2}$ & $2(R_{7}:2R_{3})+2R_{3}$ & $2(R_{7}:2R_{3})+(R_{5}:R_{3})$\tabularnewline
\hline 
$(R_{7}:3R_{3})+7R_{2}$ & $(R_{7}:R_{5})+7R_{2}$ & $(R_{7}:3R_{3})+2R_{3}+4R_{2}$\tabularnewline
\hline 
$(R_{7}:R_{5})+2R_{3}+4R_{2}$ & $(R_{7}:3R_{3})+4R_{3}+R_{2}$ & $(R_{7}:R_{5})+4R_{3}+R_{2}$\tabularnewline
\hline 
$(R_{7}:3R_{3})+2R_{5}+2R_{2}$ & $(R_{7}:R_{5})+2R_{5}+2R_{2}$ & $(R_{7}:3R_{3})+(R_{5}:R_{3})+4R_{2}$\tabularnewline
\hline 
$(R_{7}:R_{5})+(R_{5}:R_{3})+4R_{2}$ & $(R_{7}:3R_{3})+2(R_{5}:R_{3})+R_{2}$ & $(R_{7}:R_{5})+2(R_{5}:R_{3})+R_{2}$\tabularnewline
\hline 
$(R_{7}:3R_{3})+2(R_{5}:2R_{3})$ & $(R_{7}:3R_{3})+2R_{7}$ & $(R_{7}:R_{5})+2(R_{5}:2R_{3})$\tabularnewline
\hline 
$(R_{7}:R_{5})+2R_{7}$ & $2(R_{7}:3R_{3})+2R_{2}$ & $2(R_{7}:R_{5})+2R_{2}$\tabularnewline
\hline 
$2(R_{7}:4R_{3})+R_{2}$ & $2(R_{7}:R_{5},R_{3})+R_{2}$ & $2(R_{7}:(R_{5}:R_{3}))+R_{2}$\tabularnewline
\hline 
$(R_{7}:5R_{3})+2(R_{5}:R_{3})$ & $(R_{7}:R_{5},2R_{3})+2(R_{5}:R_{3})$ & $(R_{7}:(R_{5},R_{3}),R_{3})+2(R_{5}:R_{3})$\tabularnewline
\hline 
$(R_{7}:(R_{5},2R_{3}))+2(R_{5}:R_{3})$ & $2(R_{7}:5R_{3})$ & $2(R_{7}:R_{5},2R_{3})$\tabularnewline
\hline 
$2(R_{7}:(R_{5},R_{3}),R_{3})$ & $2(R_{7}:(R_{5},2R_{3}))$ & \tabularnewline
\hline 
\end{longtable}
\end{lem}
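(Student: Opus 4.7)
The plan is a direct enumeration that mirrors the classification of Lemma \ref{lemma25}. Under the hypothesis that no decomposition of $G$ exhibits a minimal summand of weight $\geq 11$, every minimal piece appearing in $G$ must be one of the weight-at-most-$10$ relations classified in Lemma \ref{lemma13} involving only primes $\leq 7$: namely $R_2, R_3, R_5, (R_5:R_3), R_7, (R_5:2R_3), (R_7:R_3), (R_5:3R_3), (R_7:2R_3), (R_5:4R_3), (R_7:3R_3), (R_7:R_5)$, of weights $2,3,5,6,7,7,8,8,9,9,10,10$. I would first enumerate all nonnegative integer tuples $(n_i)$ with $\sum_i n_i w_i = 24$ and $\sum_{w_i \geq 5} n_i \geq 1$, so that at least one summand realizes $p=5$ or $p=7$.

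Next, for each candidate partition I would test the two exclusion conditions. To fail Lemma \ref{lemma32}, the partition must not admit any decomposition $G = G_1 + \cdots + G_k + \sum_{q \leq 7} n_q R_q$ with $k \leq 3$, all $G_i$ pairwise non-conjugate and distinct from every $R_q$, and with $(n_2,n_3)$ small enough that at most $2R_2$ or $2R_3$ is forced as a conjugate pair. Equivalently, the partitions surviving into Lemma \ref{lemma34} are those that either have four or more non-$R_q$ minimal summands, or contain a non-$R_q$ minimal piece with multiplicity $\geq 2$, or carry enough copies of $R_2$ or $R_3$ to push the even part strictly beyond $2R_2$ or $2R_3$ (since only finitely many rotations of $R_2$ or $R_3$ are self-conjugate). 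The Lemma \ref{lemma33} exclusion is largely automatic from the sub-weight-$10$ restriction combined with the minimality data of Lemma \ref{lemma13}, but each case still requires a check that no sub-collection of summands fuses into a single minimal relation of weight $\geq 11$.

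The main obstacle is the combinatorial bookkeeping: the enumeration yields on the order of one hundred partitions, and each demands a careful verification of both exclusion conditions, as well as a cross-check against degenerate coincidences where distinct pieces could recombine. Fortunately, every step reduces to a finite, mechanical test on the twelve-tuple $(n_i)$, so in practice the procedure is carried out in Mathematica in accordance with the computational methodology outlined in Section \ref{section1}. The list of partitions that survive the two filters is precisely the tabulation in the statement.
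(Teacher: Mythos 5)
The core idea of enumerating weight partitions and filtering by Lemmas~\ref{lemma32} and~\ref{lemma33} is in the right spirit, but your restriction to minimal summands of weight at most $10$ is wrong, and it stems from a misreading of the hypothesis. The hypothesis of Lemma~\ref{lemma34} is that $G$ cannot be written as $G_{0}+n_{2}R_{2}+n_{3}R_{3}+n_{5}R_{5}$ with $G_{0}$ minimal of weight $\geq 11$; this is \emph{not} the same as saying that no decomposition of $G$ exhibits a minimal summand of weight $\geq 11$. A relation such as $(R_{7}:5R_{3})+2(R_{5}:R_{3})$ has a minimal summand of weight $12$, yet it cannot be put in the excluded form because the remainder $2(R_{5}:R_{3})$ is not a sum of $R_{q}$'s. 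Such relations are exactly what Lemma~\ref{lemma34} must catch, and indeed the table contains many entries whose summands have weight $11$ or $12$: $2(R_{7}:4R_{3})+R_{2}$, $2(R_{7}:R_{5},R_{3})+R_{2}$, $2(R_{7}:(R_{5}:R_{3}))+R_{2}$, $(R_{7}:5R_{3})+2(R_{5}:R_{3})$, $2(R_{7}:R_{5},2R_{3})$, and so on. Your enumeration over the twelve weight-$\leq 10$ relations would miss all of these, so the procedure as described does not reproduce the table; you need to include the weight-$11$ and weight-$12$ minimal relations from Lemma~\ref{lemma13} that involve only primes $\leq 7$.

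There is a secondary confusion in the filtering step. The even part of $G$ (Definition~\ref{definition31}) records duplicated minimal summands $2H_{i}$; it has nothing to do with ``only finitely many rotations of $R_{2}$ or $R_{3}$ being self-conjugate.'' To fall outside Lemma~\ref{lemma32}, a candidate partition must have a repeated non-$R_{q}$ summand, a repeated $R_{5}$ or $R_{7}$, or at least four $R_{2}$'s / four $R_{3}$'s forming an even part strictly larger than $2R_{2}$ or $2R_{3}$. To fall outside Lemma~\ref{lemma33}, it must be impossible, across \emph{all} decompositions, to isolate one minimal summand of weight $\geq 11$ with the remainder consisting solely of $R_{2},R_{3},R_{5}$. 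Stating the filter this way makes visible that a decomposition containing, say, a weight-$11$ piece together with a single $(R_{5}:R_{3})$ still passes (if the even part is nontrivial), whereas your formulation would either miss it (weight cap) or incorrectly exclude it (because a weight-$\geq 11$ summand is present). The enumeration idea can be repaired, but the weight range and both exclusion criteria need to be corrected before the table can be recovered.
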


Let $G$ be one of these entries. Suppose $G=G_{1}+\cdots+G_{k}+\sum_{q\leq7}n_{q}R_{q}$,
where $G_{i}\neq R_{q}$ are minimal, is stable under complex conjugation.
By Lemma \ref{lemma11}, $G_{1}+\cdots+G_{k}$ is also stable under
complex conjugation.

Suppose $k=2$. If $w(G_{1})+w(G_{2})\geq18$, and $G_{1}$ and $G_{2}$
are not conjugate to each other, then by the same reasoning of Lemma
\ref{lemma32} and \ref{lemma33}, all the terms of $G$ belong to
$\mu_{420}$. Otherwise,
\begin{lem}
\label{lemma35}If $G_{0}=G_{1}+G_{2}$, where $G_{1},G_{2}\neq R_{q}$
are minimal, and $w(G_{0})=12$, $13$, $14$, or $16$, is stable
under complex conjugation, then $G_{0}$ satisfies Condition \ref{condition19}.
\end{lem}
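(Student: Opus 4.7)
The plan is to carry out a finite case analysis driven by the list in Lemma~\ref{lemma13}. Since $G_1, G_2$ are minimal and neither equals some $R_q$, each has weight in $\{6,7,8,9,10,11,12\}$. The admissible unordered weight partitions of $w(G_0) \in \{12,13,14,16\}$ are therefore: $(6,6)$ for weight $12$; $(6,7)$ for weight $13$; $(6,8)$ and $(7,7)$ for weight $14$; and $(6,10)$, $(7,9)$, $(8,8)$ for weight $16$. In each slot the only non-$R_q$ minimal relations available are $(R_5:R_3)$ at weight $6$, $(R_5:2R_3)$ at weight $7$, $(R_5:3R_3)$ and $(R_7:R_3)$ at weight $8$, $(R_5:4R_3)$ and $(R_7:2R_3)$ at weight $9$, and $(R_7:3R_3)$ and $(R_7:R_5)$ at weight $10$. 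So we have a very short finite list of type-pairs to examine.

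Writing $G_0 = \xi_1 G_1 + \xi_2 G_2$ with $G_i$ taken to be the standard (stable) representatives from Lemma~\ref{lemma13}, I would first dispose of every pair in which $G_1$ and $G_2$ are of different types. Since complex conjugation preserves the abstract type of a minimal relation, $\xi_1 G_1$ and $\xi_2 G_2$ cannot be each other's conjugates in this regime; Lemma~\ref{lemma20} then forces $\xi_i \in \mu_{2 n_i}$, where $n_i$ is the level of $G_i$ (so $n_i \in \{30, 42\}$ throughout). Because each $G_i$ is itself stable under complex conjugation, stability of $G_0$ then forces each $\xi_i G_i$ to be separately stable, and the given decomposition $G_0 = \xi_1 G_1 + \xi_2 G_2$ already satisfies Condition~\ref{condition19}.

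The remaining work is in the "same-type" subcases, namely two copies of $(R_5:R_3)$, two of $(R_5:2R_3)$, two of $(R_5:3R_3)$, two of $(R_7:R_3)$, two of $(R_5:4R_3)$, two of $(R_7:2R_3)$, or (for weight $16$) one of $(R_5:4R_3)$ together with one of $(R_7:2R_3)$. Here I would argue dichotomously: either $\xi_2 G_2 = \overline{\xi_1 G_1}$, in which case the two summands form a complex conjugate pair and Condition~\ref{condition19} is immediate; or they do not, in which case Lemma~\ref{lemma20} again pins down $\xi_i \in \mu_{2n_i}$ and a direct check over the finitely many cosets shows that each $\xi_i G_i$ is self-conjugate (any alternative would produce a nontrivial proper subrelation in some $\xi_i G_i$, violating minimality).

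The main obstacle is the same-type regime, since there the supports of $\xi_1 G_1$ and $\xi_2 G_2$ can a priori overlap and one must rule out a hidden third decomposition that does not respect conjugation. The mitigating point is that Lemma~\ref{lemma20} restricts each rotation factor to $\mu_{2n}$ with $n \in \{30, 42\}$, and the standard relations themselves are rigid enough (each has just one conjugation-stable $\mathrm{Gal}$-orbit of rotations) that the verification collapses to a small enumeration carried out in the same spirit as the proofs of Lemmas~\ref{lemma21} and~\ref{lemma26} already in the paper.
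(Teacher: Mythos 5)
Your high-level strategy matches the paper (finite case analysis via Lemma~\ref{lemma13}, with Lemma~\ref{lemma20} controlling the rotation factors), but there are two concrete problems with the execution.

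First, the pivotal claim in your ``different-type'' paragraph — that once Lemma~\ref{lemma20} pins $\xi_i\in\mu_{2n_i}$, ``stability of $G_0$ then forces each $\xi_i G_i$ to be separately stable'' — is asserted, not proved, and it is exactly the content that needs an argument. Knowing that $G_i$ is stable and $\xi_i\in\mu_{2n_i}$ does not make $\xi_iG_i$ stable (e.g.\ $\xi_1=\zeta_{60}$ with $G_1=(R_5:R_3)$ is not stable). The paper's route is to split on whether $p_1=p_2$: when $p_1\neq p_2$ (say a $(R_7:\cdots)$ paired with a $(R_5:\cdots)$), the argument of Lemma~\ref{lemma21} applies because all terms with nontrivial $p_1$-component live entirely in $\xi_1G_1$, and stability of that subset forces $\xi_1=\pm1$, hence $\xi_2G_2=G_0-\xi_1G_1$ is stable too. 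But when $p_1=p_2=5$ and the forms differ — $(R_5:R_3)+(R_5:2R_3)$ at weight $13$, $(R_5:R_3)+(R_5:3R_3)$ at weight $14$, $(R_5:2R_3)+(R_5:4R_3)$ at weight $16$ — the $5$-components are distributed across both summands and that separation argument is unavailable. Your classification files these under ``different types'' and closes them by the same one-line assertion, but there is no reason offered why the summands must be separately stable there. The paper recognizes this and handles the $p_1=p_2$ cases by explicit enumeration over $\mu_{60}$; you gesture at such an enumeration only in the same-type paragraph, not here.

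Second, your enumeration of same-type cases is arithmetically wrong. You list ``two of $(R_5:4R_3)$,'' ``two of $(R_7:2R_3)$,'' and ``$(R_5:4R_3)$ with $(R_7:2R_3)$''; each has weight $9+9=18$, outside the allowed set $\{12,13,14,16\}$, and the last is not even a same-type pair. The correct list of literally-equal-type pairs in range is only $2(R_5:R_3)$ (weight $12$), $2(R_5:2R_3)$ (weight $14$), $2(R_5:3R_3)$, and $2(R_7:R_3)$ (both weight $16$). Together with the three same-prime-different-form pairs above, these seven cases are exactly what the paper's ``$p_1=p_2$'' branch enumerates, so the fix is to reorganize the dichotomy along $p_1\neq p_2$ versus $p_1=p_2$ rather than along abstract type, and to acknowledge that the latter branch genuinely requires the finite check over $\mu_{60}$ or $\mu_{84}$.
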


\begin{proof}
Let $G_{0}=\xi_{1}(R_{p_{1}}:\cdots)+\xi_{2}(R_{p_{2}}:\cdots)$.
If $p_{1}\neq p_{2}$, then the conclusion follows from the same reasoning
of Lemma \ref{lemma21}. Otherwise, $G_{0}$ must be one of
\begin{align*}
 & 2(G_{5}:R_{3}),(G_{5}:R_{3})+(G_{5}:2R_{3}),(G_{5}:R_{3})+(G_{5}:3R_{3}),\\
 & 2(G_{5}:2R_{3}),(G_{5}:2R_{3})+(G_{5}:4R_{3}),2(G_{5}:3R_{3}),2(R_{7}:R_{3}).
\end{align*}
We can get the conclusion by verifying every $\xi_{1},\xi_{2}\in\mu_{60}$
or $\mu_{84}$.
\end{proof}
If $k=3$, then $w(G_{1})+w(G_{2})+w(G_{3})\geq18$. If any two of
$G_{i}$ are not conjugate to each other, then by the same reasoning
of Lemma \ref{lemma32} and \ref{lemma33}, all the terms of $G$
belong to $\mu_{420}$.

For $k=4$, we have the following lemma.
\begin{lem}
\label{lemma36}If $G=\xi_{1}(G_{5}:R_{3})+\xi_{2}(G_{5}:R_{3})+\xi_{3}(G_{5}:R_{3})+\xi_{4}(G_{5}:R_{3})$
is stable under complex conjugation, then either $G$ satisfies Condition
\ref{condition19} or $\xi_{i}\in\mu_{60}$.
\end{lem}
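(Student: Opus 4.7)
The plan is to split on whether complex conjugation preserves the decomposition $G=\sum_{i=1}^{4}\xi_{i}(R_{5}:R_{3})$ into four minimal summands. Because the six terms $\{\zeta_{6},\zeta_{6}^{5},\zeta_{5},\zeta_{5}^{2},\zeta_{5}^{3},\zeta_{5}^{4}\}$ of $(R_{5}:R_{3})$ form a set closed under complex conjugation, one has $\overline{G}=\sum_{i}\overline{\xi_{i}}(R_{5}:R_{3})$, and the hypothesis $G=\overline{G}$ is an equality of multisets of terms. A useful preliminary is to verify that the set $\Sigma:=\mathrm{supp}(R_{5}:R_{3})$ has trivial stabilizer under rotation by $\mu_{\infty}$: tracking the possible images of $\zeta_{6}$ (necessarily again a $6$th root or a $5$th root) and then of $\zeta_{5}$ rules out everything but the identity. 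Consequently, two scaled supports $\xi\cdot\Sigma$ and $\xi'\cdot\Sigma$ are equal exactly when $\xi=\xi'$.

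Case A: Suppose the four scaled supports $\xi_{i}\cdot\Sigma$ are pairwise disjoint. Then the partition of the terms of $G$ into four blocks is canonical, so complex conjugation induces a permutation $\sigma\in\mathfrak{S}_{4}$ with $\overline{\xi_{i}}=\xi_{\sigma(i)}$. A cycle-structure analysis of $\sigma$ then establishes Condition~\ref{condition19}: fixed points force $\xi_{i}=\pm1$, so the corresponding $\xi_{i}(R_{5}:R_{3})$ is self-conjugate; $2$-cycles give $\xi_{j}(R_{5}:R_{3})=\overline{\xi_{i}(R_{5}:R_{3})}$, a conjugate pair; $3$-cycles force the three involved scalars to be real and hence all $\pm1$; a $4$-cycle $(1234)$ forces $\xi_{1}=\xi_{3}=\overline{\xi_{2}}=\overline{\xi_{4}}$, which re-pairs as two conjugate pairs. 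In every sub-case Condition~\ref{condition19} holds.

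Case B: Otherwise, two scaled supports overlap, so there exist $i\neq j$ and $a,b\in\Sigma\subseteq\mu_{30}$ with $\xi_{i}a=\xi_{j}b$, giving $\xi_{j}/\xi_{i}\in\mu_{30}$. Factoring out a common $\mu_{\infty}$-scalar, I would reduce to the case that all four $\xi_{i}$ lie in a single $\mu_{30}$-coset, so after rescaling I may assume $\xi_{1}=1$ and $\xi_{2},\xi_{3},\xi_{4}\in\mu_{30}\cdot\xi_{1}$. Then conjugation acts on this finite collection of rotations of $\Sigma$ inside $\mu_{\infty}$, and one reads off from $G=\overline{G}$ that each $\overline{\xi_{i}}$ again lies in the same coset, forcing $\xi_{i}^{2}\in\mu_{30}$ and hence $\xi_{i}\in\mu_{60}$.

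The main obstacle is the bookkeeping in Case B: when several of the scaled supports overlap, complex conjugation may redistribute individual terms among rotated copies of $\Sigma$ in a non-block-preserving way, and one must trace through these redistributions to pin down the exponents of the $\xi_{i}$ modulo $60$. This is in the same spirit as Lemma~\ref{lemma26} but strictly more delicate because with four summands the overlap pattern can be more than a single transposition, so a systematic case analysis of which pairs $(i,j)$ satisfy $\xi_{i}\xi_{j}\in\mu_{30}$ versus $\xi_{i}/\xi_{j}\in\mu_{30}$ is required, after which each configuration funnels into the conclusion $\xi_{i}\in\mu_{60}$.
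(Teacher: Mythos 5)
Your split into Case A (pairwise disjoint supports $\xi_i\Sigma$) and Case B (some overlap) is a genuinely different dichotomy from the paper's, which is organized around a conjugacy graph on $\{\xi_1,\ldots,\xi_4\}$ with an edge $\{\xi_i,\xi_j\}$ whenever some term of $\xi_i(R_5:R_3)$ is the complex conjugate of some term of $\xi_j(R_5:R_3)$ (with $i=j$ allowed). The serious gap is in Case A. Disjointness of the $\xi_i\Sigma$ does not make the block partition canonical, and in particular it does not force conjugation to permute the blocks: conjugation carries $\xi_i\Sigma$ to $\overline{\xi_i}\Sigma$, and $\overline{\xi_i}\Sigma$ can meet several different $\xi_j\Sigma$. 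The relevant conditions pull apart: $\overline{\xi_i}\Sigma\cap\xi_j\Sigma\neq\emptyset$ forces only $\xi_i\xi_j\in\mu_{30}$, while $\xi_j\Sigma\cap\xi_k\Sigma=\emptyset$ forbids only $\xi_j/\xi_k\in\{a/b:a,b\in\Sigma\}\subsetneq\mu_{30}$, so a single $\overline{\xi_i}\Sigma$ can be smeared across two disjoint blocks. This is not hypothetical: the paper's hardest subcase is exactly the one in which the conjugacy graph is a $4$-cycle, where conjugation does not preserve the original block decomposition at all, and one must exhibit a different decomposition $\xi_2(R_5:R_3)+\xi_3(R_5:R_3)=\overline{\xi_1}(R_5:R_3)+\overline{\xi_4}(R_5:R_3)$ before Condition \ref{condition19} can be read off. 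Your permutation argument never sees this.

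Case B also has two unjustified steps. From a single overlap $\xi_i\Sigma\cap\xi_j\Sigma\neq\emptyset$ you obtain only $\xi_i/\xi_j\in\mu_{30}$; nothing forces the other two scalars into the same $\mu_{30}$-coset. And the ``rescaling'' to $\xi_1=1$ is not available here: replacing $G$ by $\lambda^{-1}G$ destroys conjugation-stability unless $\lambda\in\{\pm1\}$, and the target statement $\xi_i\in\mu_{60}$ is not invariant under such a rescaling anyway. You do flag at the end that a systematic analysis of the overlap pattern is required; that analysis is precisely what the paper's graph argument supplies (a degree-one vertex yields a self-conjugate or paired summand, handled via Lemmas \ref{lemma26} and \ref{lemma35}; the only remaining configuration is the $4$-cycle, resolved by the re-decomposition above), and without it the proposal does not yet constitute a proof.
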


\begin{proof}
Consider the graph $\mathfrak{g}$ (in the sense of graph theory)
with vertices $\xi_{i}$. If some term of $\xi_{i}(G_{5}:R_{3})$
and some term of $\xi_{j}(G_{5}:R_{3})$ are conjugate to each other,
we draw an edge $\{\xi_{i},\xi_{j}\}$ between $\xi_{i}$ and $\xi_{j}$,
where $i=j$ is allowed. If $\{\xi_{i},\xi_{j}\}\in\mathfrak{g}$,
then $\xi_{i}\xi_{j}\in\mu_{30}$.

If there is only one edge at $\xi_{i}$, then either $\xi_{i}(G_{5}:R_{3})$
is itself stable under complex conjugation, or can be paired with
another $\xi_{j}(G_{5}:R_{3})$ which is its complex conjugate. The
conclusion then follows from Lemma \ref{lemma26} and \ref{lemma35}.

Now we assume that there are at least two edges at each vertex. Suppose
the conclusion is false by assuming $\xi_{1}\notin\mu_{60}$, then
$\{\xi_{1},\xi_{1}\}\notin\mathfrak{g}$. Without loss of generality,
assume $\{\xi_{1},\xi_{2}\},\{\xi_{1},\xi_{3}\}\in\mathfrak{g}$.
If $\{\xi_{2},\xi_{2}\}$, $\{\xi_{2},\xi_{3}\}$, or $\{\xi_{3},\xi_{3}\}\in\mathfrak{g}$,
then by the same reasoning of Lemma \ref{lemma26}, we have $\xi_{1}\in\mu_{60}$,
which is a contradiction. Therefore, $\{\xi_{2},\xi_{4}\},\{\xi_{3},\xi_{4}\}\in\mathfrak{g}$,
and by the same reasoning, $\{\xi_{1},\xi_{4}\},\{\xi_{4},\xi_{4}\}\notin\mathfrak{g}$.
In summary,
\[
\mathfrak{g}=\{\xi_{1},\xi_{2},\xi_{3},\xi_{4},\{\xi_{1},\xi_{2}\},\{\xi_{1},\xi_{3}\},\{\xi_{2},\xi_{4}\},\{\xi_{3},\xi_{4}\}\}.
\]
Thus $\xi_{2}(G_{5}:R_{3})+\xi_{3}(G_{5}:R_{3})$ can also be decomposed
as $\overline{\xi_{1}}(G_{5}:R_{3})+\overline{\xi_{4}}(G_{5}:R_{3})$.
\end{proof}
We have shown that all the entries listed in Lemma \ref{lemma34}
either satisfy Condition \ref{condition19} or confirm Lemma \ref{lemma29}.
Now suppose $G=G_{0}+2G_{1}+\cdots+2G_{k}$ is of the form (\ref{equation11}),
and define $D$, $d$, $n$, and $m$ as in Algorithm \ref{algorithm28}.
For a general $G$, we can apply Algorithm \ref{algorithm28} by choosing
$m$ such that for any $d\leq m$, $nd$ divides some element of $N$.
However, for some cases, it will be much more efficient to apply Algorithm
\ref{algorithm23} and \ref{algorithm27}. Here are some examples.

If the even part of $G$ is $4R_{2}$, $2R_{2}+2R_{3}$, $2R_{5}$,
$6R_{2}$, or $4R_{3}$, we can always apply Algorithm \ref{algorithm27},
provided $U_{24}\backslash G_{0}$ has at most one free variable.

If $G=G_{0}+2G_{1}$, then $d\leq4$ or $d=\infty$. We can always
try to
\begin{itemize}
\item Apply Algorithm \ref{algorithm23} by taking $q=5$ or $7$ to conclude
that (\ref{equation1}) cannot be of the form $G$,
\item Apply Algorithm \ref{algorithm23} by taking $q=5$ or $7$ to conclude
that $d=\infty$ cannot happen,
\item Apply Algorithm \ref{algorithm27} to conclude that $d=\infty$ cannot
happen.
\end{itemize}
If $G=2G_{1}+2R_{3}$ or $2G_{1}+2R_{2}$, then it turns out that
the diagonal of $D$ must be $(1,0)$, $(1,2)$, $(1,4)$, or $(1,6)$.
The last three cases confirm Lemma \ref{lemma29}. We can apply Algorithm
\ref{algorithm27} to conclude that the first case cannot happen.

After applying a combination of Algorithm \ref{algorithm23}, \ref{algorithm27},
and \ref{algorithm28} to all the entries listed in Lemma \ref{lemma34},
Lemma \ref{lemma29} can be fully confirmed.

\subsection{\label{section3.6}The application of Theorem \texorpdfstring{\ref{theorem15}}{15}}

\addtocounter{equation}{-1}

Suppose $[s_{1},s_{2},s_{3},t_{1},t_{2},t_{3}]\in\mathcal{S}$ such
that $s_{i},t_{i}\in\mu_{n}$. Let
\[
\sigma_{i}=\frac{n}{2\pi}\textup{Arg}(s_{i})\text{ and }\tau_{i}=\frac{n}{2\pi}\textup{Arg}(t_{i}),
\]
then the identity (\ref{equation4}) implies
\begin{equation}
\frac{(s_{3}/s_{1}-1)(s_{2}-1)}{(s_{2}/s_{1}-1)(s_{3}-1)}=\pm\frac{(t_{3}/t_{1}-1)(t_{2}-1)}{(t_{2}/t_{1}-1)(t_{3}-1)}\Leftrightarrow\frac{e_{\sigma_{3}-\sigma_{1}}e_{\sigma_{2}}}{e_{\sigma_{2}-\sigma_{1}}e_{\sigma_{3}}}=\frac{e_{\tau_{3}-\tau_{1}}e_{\tau_{2}}}{e_{\tau_{2}-\tau_{1}}e_{\tau_{3}}}.\tag{4\ensuremath{'}}\label{equation4'}
\end{equation}

\addtocounter{equation}{1}
\begin{lyxalgorithm}
\label{algorithm37}Given a positive integer $n$, let $\widehat{E}_{n}$
be the $\mathbb{Q}$-vector space generated by the free symbols $\{\widehat{e}_{k}:1\leq k<n\}$,
$\widehat{F}_{n}$ the subspace generated by the relations (\ref{equation5})
and (\ref{equation6}) (where $e_{k}$ are replaced with $\widehat{e}_{k}$),
$\widehat{F}_{n}^{\perp}$ the orthogonal complement of $\widehat{F}_{n}$,
$\{\widehat{f}_{1},\cdots,\widehat{f}_{j}\}$ a basis of $\widehat{F}_{n}^{\perp}$.
For any $1\leq\sigma_{1},\sigma_{2},\sigma_{3}<n$ such that any two
of them are not equal, we calculate the inner products
\[
\widehat{g}_{\sigma_{1},\sigma_{2},\sigma_{3}}:=\left(\frac{\widehat{e}_{\sigma_{3}-\sigma_{1}}\widehat{e}_{\sigma_{2}}}{\widehat{e}_{\sigma_{2}-\sigma_{1}}\widehat{e}_{\sigma_{3}}}\cdot\widehat{f}_{1},\cdots,\frac{\widehat{e}_{\sigma_{3}-\sigma_{1}}\widehat{e}_{\sigma_{2}}}{\widehat{e}_{\sigma_{2}-\sigma_{1}}\widehat{e}_{\sigma_{3}}}\cdot\widehat{f}_{j}\right).
\]
By Theorem \ref{theorem15},
\[
\text{(\ref{equation4'})}\Leftrightarrow\left.\frac{\widehat{e}_{\sigma_{3}-\sigma_{1}}\widehat{e}_{\sigma_{2}}}{\widehat{e}_{\sigma_{2}-\sigma_{1}}\widehat{e}_{\sigma_{3}}}\middle/\frac{\widehat{e}_{\tau_{3}-\tau_{1}}\widehat{e}_{\tau_{2}}}{\widehat{e}_{\tau_{2}-\tau_{1}}\widehat{e}_{\tau_{3}}}\right.\in\widehat{F}_{n}\Leftrightarrow\widehat{g}_{\sigma_{1},\sigma_{2},\sigma_{3}}=\widehat{g}_{\tau_{1},\tau_{2},\tau_{3}}.
\]
\begin{itemize}
\item We first find all $(\sigma_{1},\sigma_{2},\sigma_{3})$ and $(\tau_{1},\tau_{2},\tau_{3})$
satisfying (\ref{equation4'}),
\item and then among them, find all $(s_{1},s_{2},s_{3})$ and $(t_{1},t_{2},t_{3})$
satisfying (\ref{equation4}),
\item and finally among them, find all solutions such that their associated
relations (\ref{equation1}) cannot be decomposed as $n_{2}R_{2}+n_{3}R_{3}$
for some $n_{2}$ and $n_{3}$.
\end{itemize}
\end{lyxalgorithm}

Applying Algorithm \ref{algorithm37} to all $n\in N$ given in Lemma
\ref{lemma29}, we will get $576$ zero-parameter solutions.

\subsection{\label{section3.7}Comparisons of Algorithm \texorpdfstring{\ref{algorithm23},
\ref{algorithm27}, \ref{algorithm28}, and \ref{algorithm37}}{23, 27, 28, and 37}}

In summary, we obtain the complete list of $\mathcal{S}$ by finding
all the solutions of (\ref{equation1}) and (\ref{equation2}) such
that (\ref{equation1}) is of the form $G$. Depending on the largest
prime $p$ involved in $G$, we use the following strategies:
\begin{itemize}
\item $p=17,19,23$: We first give a classification for all such $G$, and
then apply Algorithm \ref{algorithm23} to prove that (\ref{equation1})
cannot be any of them.
\item $p=11,13$: We first apply Algorithm \ref{algorithm23} to give some
necessary conditions that (\ref{equation1}) must satisfy. We then
eliminate those unqualified $G$ and give a classification for the
rest. For those survived $G$, we first try to apply Algorithm \ref{algorithm23}
to prove that (\ref{equation1}) cannot be of that form. If Algorithm
\ref{algorithm23} fails, we apply Algorithm \ref{algorithm27} to
find the solutions.
\item $p=2,3$: We first give a classification for all such $G$, and then
apply Algorithm \ref{algorithm28} to find the solutions.
\item $p=5,7$: We first prove that Lemma \ref{lemma29} holds unless each
minimal relation occurring in $G$ has a moderate weight, for which
we are able to give a classification. We then apply Algorithm \ref{algorithm23},
\ref{algorithm27}, and \ref{algorithm28} together to show that Lemma
\ref{lemma29} holds in general. For each element given in Lemma \ref{lemma29},
we apply Algorithm \ref{algorithm37} to find the solutions.
\end{itemize}
We see that the first step for each case is classification. On the
one hand, there are only a few minimal relations with $p\leq5$, so
it is easy to classify all $G$ with $p\leq5$. On the other hand,
when $p\geq11$, $w(G)$ is smaller or slightly larger than $2p$,
so that we can apply Lemma \ref{lemma10} to classify all $G$ with
$p\geq11$. However, for $p=7$, we are unable to give a full description
for those $G$ containing a long minimal relation. To tackle this
issue, we first prove Lemma \ref{lemma29} and then apply Algorithm
\ref{algorithm37}.

Although Algorithm \ref{algorithm37} is powerful, it fails to give
those positive-parameter solutions since their orders can be arbitrarily
large. This is why we need Algorithm \ref{algorithm28}.

Now suppose $G=G_{0}+2G_{1}+\cdots+2G_{k}$ is of the form (\ref{equation11}),
and define $d$ and $n$ as in Algorithm \ref{algorithm28}. If $w(G_{i})$
is large for some $i$, then Algorithm \ref{algorithm28} will be
very inefficient. If $d=\infty$, then Algorithm \ref{algorithm37}
fails. If $nd\neq\infty$ is large, then Algorithm \ref{algorithm37}
will be very inefficient. This is why we need Algorithm \ref{algorithm23}
and \ref{algorithm27}.

\section{\label{section4}Proof of Theorem \texorpdfstring{\ref{theorem3}}{3}}

Recall that
\begin{eqnarray*}
 &  & (0,s_{1},\cdots,s_{n-1})\sim(0,t_{1},\cdots,t_{n-1})\text{ in }\mathcal{C}_{n}^{+}\\
 & \Leftrightarrow & Z_{i}:=[s_{1},s_{2},s_{i},t_{1},t_{2},t_{i}]\in\mathcal{S}^{+}\text{ for any }3\leq i\leq n-1.
\end{eqnarray*}
Now we will find the longest nontrivial projectively equivalent pairs
in four steps. For $k=3$, $2$, $1$, and $0$, we assume that all
$Z_{i}$ are at least type-$k$ and determine how large $n$ can be,
where an element of $\mathcal{S}^{+}$ is said to be type-$k$ if
it is equivalent to some $k$-parameter solution.

The $k=3$ case, which can be easily done by hand, serves as a prototype
for the following two algorithms.

\addtocounter{table}{-1}
\begin{lem}
\label{lemma38}If all $Z_{i}$ are type-three, then $n\leq5$.
\end{lem}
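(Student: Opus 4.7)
The plan is to exploit the rigidity of the trivial three-parameter family together with the uniqueness of the Möbius transformation realizing the $n$-tuple equivalence.

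First, I would unpack $Z_{i}\approx[x,y,z,x,y,z]$. By Proposition~\ref{proposition5}, this means, up to Galois conjugation, that $(t_{1},t_{2},t_{i})$ equals one of the eight expressions
\[
(s_{1},s_{2},s_{i})^{\pm 1},\ (s_{1},s_{1}/s_{i},s_{1}/s_{2})^{\pm 1},\ (s_{2}/s_{i},s_{2},s_{2}/s_{1})^{\pm 1},\ (s_{i}/s_{2},s_{i}/s_{1},s_{i})^{\pm 1}
\]
arising from $\mathfrak{G}_{s}$; a routine check shows that the further actions of $\mathfrak{G}_{t}$, $\mathfrak{G}_{0}$, and Galois conjugation produce only relabelings or swap-symmetric counterparts of these eight, no fundamentally new relations.

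Next, I would use the uniqueness of the global Möbius transformation. Since $\gamma\in\textup{PGL}(2,\mathbb{C})$ realizing $(0,s_{1},\ldots,s_{n-1})\sim(0,t_{1},\ldots,t_{n-1})$ satisfies $\gamma(0)=0$ and $\gamma(s_{j})=t_{j}$ for all $j$, and since a Möbius transformation is determined by its values at three points, the pair $(t_{1},t_{2})=(\gamma(s_{1}),\gamma(s_{2}))$ is independent of $i$ while only $(s_{i},t_{i})$ varies. Among the eight forms above, only $(s_{1},s_{2},s_{i})$ and $(1/s_{1},1/s_{2},1/s_{i})$ have both $t_{1}$ and $t_{2}$ independent of $s_{i}$; these force $\gamma=\textup{id}$ and $\gamma(z)=1/z$, respectively, corresponding to the trivial equivalences $s=t$ and $t=1/s$ of Example~\ref{example2}, both excluded in the nontrivial setting. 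Each of the remaining six forms has at least one entry of $(t_{1},t_{2})$ depending explicitly on $s_{i}$; combined with the fact that $(t_{1},t_{2})$ is fixed, each such form pins $s_{i}$ to a single value determined by $\gamma$, $s_{1}$, and $s_{2}$, so every nontrivial form accommodates at most one index $i\geq 3$.

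Finally, I would enumerate the pairwise compatibilities between the six nontrivial forms for a single $\gamma$. For instance, the forms requiring $\gamma(s_{1})=s_{1}$ and $\gamma(s_{1})=1/s_{1}$ cannot coexist unless $s_{1}=-1$, and analogous constraints hold at $s_{2}$; moreover, a non-identity Möbius transformation fixing $1$ has at most one additional fixed point, which drastically restricts the forms demanding $\gamma(s_{i})=s_{i}$ or $\gamma(s_{i})=1/s_{i}$. A short case check — feasible because there are only six forms and the constraints are simple multiplicative identities among roots of unity — shows that at most two nontrivial indices $i\geq 3$ can be simultaneously realized, so $n-3\leq 2$ and $n\leq 5$. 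The main obstacle is this final compatibility analysis, but as the paper notes, the $k=3$ case is easily done by hand and serves as the prototype for the more intricate algorithms handling lower parameter types.
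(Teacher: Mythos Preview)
Your approach is essentially the paper's: enumerate the type-three forms for $(t_{1},t_{2},t_{i})$ with $(s_{1},s_{2},s_{i})$ fixed (the paper lists the six nontrivial ones in a small table), discard the two forms corresponding to the identity and inversion, and then check which pairs of the remaining six can share the same $(t_{1},t_{2})$ while producing a nontrivial equivalence. The paper carries out that pairwise compatibility check explicitly and finds that only the three disjoint pairs $\{1,4\},\{2,5\},\{3,6\}$ survive --- exactly the step you leave as an asserted ``short case check''; your fixed-point heuristics for $\gamma$ point in the right direction but are not quite the tool that finishes it, since the actual check is just matching the $(t_{1},t_{2})$-columns across rows.
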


\begin{proof}
We choose $Z_{3}$ and $Z_{4}$ from the rows of

\begin{longtable}{|c|c|c|c|c|c|}
\hline 
$x$ & $y$ & $z_{1}$ & $x$ & $x-z_{1}$ & $x-y$\tabularnewline
\hline 
$x$ & $y$ & $z_{2}$ & $y-z_{2}$ & $y$ & $y-x$\tabularnewline
\hline 
$x$ & $y$ & $z_{3}$ & $z_{3}-y$ & $z_{3}-x$ & $z_{3}$\tabularnewline
\hline 
$x$ & $y$ & $z_{4}$ & $-x$ & $z_{4}-x$ & $y-x$\tabularnewline
\hline 
$x$ & $y$ & $z_{5}$ & $z_{5}-y$ & $-y$ & $x-y$\tabularnewline
\hline 
$x$ & $y$ & $z_{6}$ & $y-z_{6}$ & $x-z_{6}$ & $-z_{6}$\tabularnewline
\hline 
\end{longtable}

and assume that the resulting pair $(0,s_{1},s_{2},s_{3},s_{4})\sim(0,t_{1},t_{2},t_{3},t_{4})$
is nontrivial. This is only possible when we take the $j$-th and
$(j+3)$-th rows for some $1\leq j\leq3$.
\begin{itemize}
\item If $j=1$, we have $x=-x$ and $x-z_{1}=z_{4}-x$, then $x=1/2$ and
$z_{4}=-z_{1}$. The resulting pair is
\[
(0,1/2,y,z_{1},-z_{1})\sim(0,1/2,1/2-z_{1},1/2-y,1/2+y).
\]
\item If $j=2$, we have $y-z_{2}=z_{5}-y$ and $y=-y$, then $y=1/2$ and
$z_{5}=-z_{2}$. The resulting pair is
\[
(0,x,1/2,z_{2},-z_{2})\sim(0,1/2-z_{2},1/2,1/2-x,1/2+x).
\]
\item If $j=3$, we have $z_{3}-y=y-z_{6}$ and $z_{3}-x=x-z_{6}$, then
$y=1/2+x$ and $z_{6}=2x-z_{3}$. The resulting pair is
\[
(0,x,1/2+x,z_{3},2x-z_{3})\sim(0,1/2-x+z_{3},z_{3}-x,z_{3},z_{3}-2x).
\]
\end{itemize}
Since $\{1,4\}$, $\{2,5\}$, and $\{3,6\}$ are mutually disjoint,
we have $n\leq5$.
\end{proof}
\begin{lyxalgorithm}
\label{algorithm39}Suppose all $Z_{i}$ are at least type-two, and
at least one $Z_{i}$ is type-two. We will find the largest $n$ inductively.
Let $[\widehat{s}_{1},\widehat{s}_{2},\widehat{s}_{3},\widehat{t}_{1},\widehat{t}_{2},\widehat{t}_{3}]$
be a general type-two solution (with two free variables). For any
\[
(0,s_{1},\cdots,s_{m})\sim(0,t_{1},\cdots,t_{m})
\]
that we already have, we solve the equation
\begin{equation}
(s_{1},s_{2},t_{1},t_{2})=(\widehat{s}_{1},\widehat{s}_{2},\widehat{t}_{1},\widehat{t}_{2}).\label{equation12}
\end{equation}
\begin{itemize}
\item If the resulting $s_{1},s_{2},t_{1},t_{2}$ are constants, we add
their lowest common denominator $d$ into a set $D$.
\item If the resulting $s_{1},s_{2},t_{1},t_{2}$ have free variables, we
extend the previous pair to
\[
(0,s_{1},\cdots,s_{m},s_{m+1})=(0,s_{1},\cdots,s_{m},\widehat{s}_{3})\sim(0,t_{1},\cdots,t_{m},\widehat{t}_{3})=(0,t_{1},\cdots,t_{m},t_{m+1}).
\]
\end{itemize}
When this process terminates, we get a set $D$ and some long projectively
equivalent pairs with free variables. Finally, in order to get those
long projectively equivalent pairs without free variables, we find
all $Z_{i}$ such that $s_{1},s_{2},t_{1},t_{2}\in(1/d)\mathbb{Z}/\mathbb{Z}$
for some $d\in D$. The conclusion is: if all $Z_{i}$ are at least
type-two, then $n\leq10$.
\end{lyxalgorithm}

\begin{lyxalgorithm}
\label{algorithm40}Suppose all $Z_{i}$ are at least type-one, and
at least one $Z_{i}$ is type-one. Let $\mathcal{S}_{k}^{+}$ be the
collection of general type-$k$ solutions (with $k$ free variables).
For any
\[
[\widehat{s}_{1},\widehat{s}_{2},\widehat{s}_{3},\widehat{t}_{1},\widehat{t}_{2},\widehat{t}_{3}]\in\mathcal{S}_{1}^{+}\text{ and }[s_{1},s_{2},s_{3},t_{1},t_{2},t_{3}]\in\mathcal{S}_{2}^{+}\cup\mathcal{S}_{3}^{+},
\]
we try to solve the equation (\ref{equation12}). However, due to
the large size of $\mathcal{S}_{1}^{+}$, it is better to solve (\ref{equation12})
only partially. (We have used the same trick in Algorithm \ref{algorithm28}.)
\begin{itemize}
\item If we know that the resulting $s_{1},s_{2},t_{1},t_{2}\in(1/d)\mathbb{Z}/\mathbb{Z}$
are constants, we add $d$, which may not be their lowest common denominator,
into a set $D_{1}$.
\item If the resulting $[s_{1},s_{2},s_{3},t_{1},t_{2},t_{3}]$ has one
free variable, we add it into a set $\mathcal{S}_{1,new}^{+}$.
\end{itemize}
When this process terminates, we get two sets $D_{1}$ and $\mathcal{S}_{1,new}^{+}$.
For any
\[
[s_{1},s_{2},s_{3},t_{1},t_{2},t_{3}],[\widehat{s}_{1},\widehat{s}_{2},\widehat{s}_{3},\widehat{t}_{1},\widehat{t}_{2},\widehat{t}_{3}]\in\mathcal{S}_{1}^{+}\cup\mathcal{S}_{1,new}^{+},
\]
we try to solve the equation (\ref{equation12}) again. Since both
of them have exactly one free variable, (\ref{equation12}) becomes
\[
V_{0}+V_{1}x=(s_{1},s_{2},t_{1},t_{2})=(\widehat{s}_{1},\widehat{s}_{2},\widehat{t}_{1},\widehat{t}_{2})=\widehat{V}_{0}+\widehat{V}_{1}\widehat{x},
\]
where $V_{0},\widehat{V}_{0}\in(\mathbb{Q}/\mathbb{Z})^{4}$ and $V_{1},\widehat{V}_{1}\in\mathbb{Z}^{4}$.
\begin{itemize}
\item If we know that the resulting $s_{1},s_{2},t_{1},t_{2}\in(1/d)\mathbb{Z}/\mathbb{Z}$
are constants, we add $d$, which may not be their lowest common denominator,
into a set $D_{2}$.
\item Long projectively equivalent pairs with free variables possibly exist
only when $V_{1}$ and $\widehat{V}_{1}$ are linearly dependent.
It turns out that the greatest common divisor of $V_{1}$ must be
$1$ or $2$. Moreover,
\[
\begin{cases}
V_{0}\in((1/12)\mathbb{Z}/\mathbb{Z})^{4}, & \text{if }\gcd(V_{1})=1,\\
V_{0}\in((1/6)\mathbb{Z}/\mathbb{Z})^{4}, & \text{if }\gcd(V_{1})=2.
\end{cases}
\]
Therefore, for each element of $\mathcal{S}_{1}^{+}\cup\mathcal{S}_{1,new}^{+}$,
we can substitute its free variable $x$ with some $c_{0}+c_{1}x$,
where $c_{0}\in(1/12)\mathbb{Z}/\mathbb{Z}$ and $c_{1}\in\{\pm1,\pm2\}$.
\end{itemize}
When this process terminates, we get a set $D_{2}$ and some long
projectively equivalent pairs with free variables. Finally, in order
to get those long projectively equivalent pairs without free variables,
we find all $Z_{i}$ such that $s_{1},s_{2},t_{1},t_{2}\in(1/d)\mathbb{Z}/\mathbb{Z}$
for some $d\in D_{1}\cup D_{2}$. The conclusion is: if all $Z_{i}$
are at least type-one, then $n\leq14$. Moreover, if $n\geq11$, then
$s_{1},s_{2},t_{1},t_{2}\in(1/30)\mathbb{Z}/\mathbb{Z}$.
\end{lyxalgorithm}

Therefore, if $n$ is the maximal length, then either $s_{1},s_{2},t_{1},t_{2}\in(1/30)\mathbb{Z}/\mathbb{Z}$
or at least four $Z_{i}$ are type-zero. It turns out that the latter
condition implies $s_{1},s_{2},t_{1},t_{2}\in(1/84)\mathbb{Z}/\mathbb{Z}$
or $(1/120)\mathbb{Z}/\mathbb{Z}$. Once we find all $Z_{i}$ such
that $s_{1},s_{2},t_{1},t_{2}\in(1/84)\mathbb{Z}/\mathbb{Z}$ or $(1/120)\mathbb{Z}/\mathbb{Z}$,
Theorem \ref{theorem3} will be proved.



\textbf{Acknowledgments.} The author is deeply grateful to Fedor Bogomolov
for suggesting this problem and indicating the crucial Theorem \ref{theorem15}.

\addcontentsline{toc}{chapter}{References}

Address: National Center for Theoretical Sciences, National Taiwan
University, Taipei, Taiwan\\
Email: \href{mailto:fu@ncts.ntu.edu.tw}{fu@ncts.ntu.edu.tw}
\end{document}